\DeclareMathOperator{\Aut}{Aut}
\DeclareMathOperator{\Cl}{Cl}
\DeclareMathOperator{\ord}{ord}
\DeclareMathOperator{\pr}{pr}
\DeclareMathOperator{\vol}{Vol}
\newtheorem{theorem}{Theorem}[section]
\newtheorem{proposition}[theorem]{Proposition}
\newtheorem{lemma}[theorem]{Lemma}
\newtheorem{corollary}[theorem]{Corollary}
\theoremstyle{definition}  
\newtheorem{definition}[theorem]{Definition}
\newtheorem*{notation}{Notation}  
\theoremstyle{remark}  
\newtheorem{remark}[theorem]{Remark}
\newtheorem{example}[theorem]{Example}
\DeclarePairedDelimiter\floor{\lfloor}{\rfloor}
\newcommand{\A}{\mathbb{A}}
\newcommand{\Gm}{\mathbb{G}_{\mathrm{m}}}
\newcommand{\K}{\mathbb{K}}
\renewcommand{\P}{\mathbb{P}}
\newcommand{\Q}{\mathbb{Q}}
\newcommand{\R}{\mathbb{R}}
\newcommand{\Z}{\mathbb{Z}}
\newcommand{\cD}{\mathcal{D}}
\newcommand{\cH}{\mathcal{H}}
\newcommand{\cJ}{\mathcal{J}}
\newcommand{\cN}{\mathcal{N}}
\newcommand{\cO}{\mathcal{O}}
\newcommand{\cS}{\mathcal{S}}
\newcommand{\cU}{\mathcal{U}}
\newcommand{\fB}{\mathfrak{B}}
\newcommand{\fD}{\mathfrak{D}}
\newcommand{\fI}{\mathfrak{I}}
\newcommand{\fP}{\mathfrak{P}}
\newcommand{\fS}{\mathfrak{S}}
\newcommand{\fa}{\mathfrak{a}}
\newcommand{\fb}{\mathfrak{b}}
\newcommand{\fc}{\mathfrak{c}}
\newcommand{\fd}{\mathfrak{d}}
\newcommand{\fe}{\mathfrak{e}}
\newcommand{\fm}{\mathfrak{m}}
\newcommand{\fp}{\mathfrak{p}}
\newcommand{\fr}{\mathfrak{r}}
\newcommand\isom{\stackrel{\sim}{\longrightarrow}}
\title[Rational points on weighted projective spaces]{Counting rational points on weighted projective spaces over number fields}
\author{Peter Bruin}
\address{Mathematisch Instituut, Universiteit Leiden, Postbus 9512, 2300 RA Leiden, Netherlands}
\email{P.J.Bruin@math.leidenuniv.nl}
\thanks{The first-named author was partially supported by the Dutch Research Council (NWO/OCW), as part of the Quantum Software Consortium programme (project number 024.003.037).}
\author{Irati Manterola Ayala}
\address{Simula UiB, Thormøhlens gate 53D, N-5006 Bergen, Norway}
\email{irati@simula.no}
\begin{document}
	\maketitle
	
	\begin{abstract}
		Deng \cite{De} gave an asymptotic formula for the number of rational points on a weighted projective space over a number field with respect to a certain height function. We prove a generalization of Deng's result involving a morphism between weighted projective spaces, allowing us to count rational points whose image under this morphism has bounded height. This method provides a more general and simpler proof for a result of the first-named author and Najman \cite{BrNa} on counting elliptic curves with prescribed level structures over number fields. We further include some examples of applications to modular curves.
	\end{abstract}
	
	\section{Introduction}
        \label{sec1}
	
	In 2013 R. Harron and A. Snowden \cite{HaSn} gave an asymptotic expression for the number of elliptic curves over $\Q$ of bounded height whose torsion subgroup is isomorphic to one of the possible torsion groups over $\Q$. This result was generalized by the first-named author and Najman \cite{BrNa} to all number fields and all level structures~$G$ such that the modular curve $X_G$ is a weighted projective line and the morphism $X_G\to X(1)$ satisfies a certain condition. The main result is stated in \cite[Theorem 7.6]{BrNa}, which gives asymptotic lower and upper bounds for the number of elliptic curves over a number field with prescribed level structures and bounded height.
	
	The main motivation for this work is to provide the tools for both generalizing and simplifying the proof of \cite[Theorem 7.6]{BrNa}. That proof uses a result of Deng \cite[Theorem (A)]{De}, which gives an asymptotic formula for the number of rational points on weighted projective spaces over number fields with bounded height (called \textit{size} by Deng), generalizing a result of Schanuel \cite{Sc}. Namely, one has 
	\begin{equation}
		\label{equ}
		\#\{x\in\P(w)(\K)\mid S_{w}(x)\leq T\}\sim CT^{|w|},
	\end{equation}
	where $C$ is a constant only depending on the choice of $\K$ and $w$. However, the main result in \cite{BrNa} requires an expression for $S_{u}(\phi(x))$ for a morphism $\phi:\P(w)_{\K}\to \P(u)_{\K}$ between weighted projective spaces; see \cite[Corollary~6.5]{BrNa}. In view of this, it is natural to seek a generalization of equation~\eqref{equ} involving the inequality $S_{u}(\phi(x))\leq T$ instead of $S_{w}(x)\leq T$.

	The aim of this paper is to generalize the results obtained in \cite{De}, in order to determine an asymptotic formula for the quantity $\#\{x\in\P(w)(\K)\mid S_{u}(\phi(x))\leq T\}$ as $T\to\infty$. We prove in our main result (Theorem~\ref{maintheorem}) that this quantity can be expressed in a similar way to equation~\eqref{equ} as
	\begin{equation*}
		\#\{x\in\P(w)(\K)\mid S_{u}(\phi(x))\leq T\}\sim CT^{|w|/e},
	\end{equation*}
	where now the constant also depends on~$\phi$. The strategy of this paper is analogous to that of \cite{De}, but the technical details are more involved.
        As a consequence, \cite[Theorem 7.6]{BrNa} can be sharpened to an asymptotic formula for the number of elliptic curves over a number field with prescribed level structure and bounded height, rather than just asymptotic upper and lower bounds. Including the improved version of the theorem is beyond the scope of this paper, but it is a fairly straightforward consequence of our result.
	
	The paper is organized as follows. In this introductory section we collect definitions and results on weighted projective spaces, morphisms between them and height functions, and we recall some results from algebraic number theory. We also fix the notation we will use throughout the paper.
In Section~\ref{sec2} we introduce various technical objects used in the proof of the main result and study some of their properties. In Section~\ref{sec3} we study our main counting problem using a method similar to that of~\cite{De}, and we state and prove our main result, Theorem~\ref{maintheorem}. In Section~\ref{sec4} we apply our counting result to two examples related to counting elliptic curves with bounded height and prescribed level structure. Finally, in Section~\ref{sec5} we discuss related results and possible directions for future work.
	
	We further note that although the application to counting elliptic curves is our main motivation, we expect that this approach can be applied to other counting problems.
	
	\subsection{Weighted projective spaces}
	
	\begin{definition}
		Let $w=(w_{1},\dots,w_{m})$ be an $m$-tuple of positive integers, with $m\geq 1$. Then the \textit{weighted projective space with weight $w$} is the quotient stack
		\[
		\P(w) := [\Gm\backslash\A^m_{\ne0}]
		\]
		over $\Z$, where $\A^m_{\ne0}$ is the complement of the zero section in the $m$-dimensional affine space~$\A^m$ and the multiplicative group $\Gm$ acts on $\A^m_{\ne0}$ by the \emph{weight~$w$ action}
		\[
		(\lambda, (x_{1},...,x_{m})) \mapsto \lambda_{*}(x_{1},...,x_{m}) :=
		(\lambda^{w_{1}}x_{1},...,\lambda^{w_{m}}x_{m}).
       \]
	\end{definition}

    If $\K$ is a field, the set of $\K$-points of $\P(w)$ is
	\begin{equation*}
		\P(w)(\K):=\K^{\times}\backslash(\K^{m}-\{0\}),
	\end{equation*}
	where the action is given by the same formula as above.

    \begin{lemma}
		\label{morphism}
		Let $\K$ be a field, let $m\ge2$, and let $w=(w_{1},\dots,w_{m})$ and $u=(u_{1},\dots,u_{m})$ be two $m$-tuples of positive integers. We view $\K[x_{1},\dots,x_{m}]$ as a graded $\K$-algebra with $x_{i}$ homogeneous of degree $w_{i}$ for all $i$. Let $P_{u,w}(\K)$ be the set of $m$-tuples $(f_{1},\dots,f_{m})$ of polynomials in $\K[x_{1},\dots,x_{m}]$ with the following properties:
		\begin{enumerate}
			\item There exists $e=e(f_{1},\dots,f_{m})\in \Z_{\geq 0}$ for which $f_{i}$ is homogeneous of degree $eu_{i}$ for all $i$.
			\item The homogeneous ideal $\sqrt{(f_{1},\dots,f_{m})}\subseteq \K[x_{1},\dots,x_{m}]$ contains the ideal $(x_{1},\dots,x_{m})$.
		\end{enumerate}
		Let $\K^\times$ act on $P_{u,w}(\K)$ by $c(f_1,\ldots,f_m)=(c^{u_1}f_1,\ldots,c^{u_m}f_m)$. Then there is a canonical bijection from $P_{u,w}(\K)$ to the set of morphisms $\P(w)_{\K}\to\P(u)_{\K}$ sending the class of $(f_1,\ldots,f_m)$ to the morphism $\phi$ given by $\phi(x_1,\ldots,x_m)=(f_1(x_1,\ldots,x_m),\ldots, f_m(x_1,\ldots,x_m))$.
	\end{lemma}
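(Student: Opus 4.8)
The plan is to describe morphisms $\P(w)_\K\to\P(u)_\K$ through their pullback action on tautological data, using that $\operatorname{Pic}(\P(w)_\K)$ is infinite cyclic. Recall that, by the definition of $\P(w)$ as $[\Gm\backslash\A^m_{\neq0}]$, an $S$-point of $\P(w)_\K$ is the same thing as a line bundle $L$ on $S$ equipped with sections $s_i\in H^0(S,L^{\otimes w_i})$ having no common zero, morphisms of such data being isomorphisms of $L$ respecting all the $s_i$; the identity of $\P(w)_\K$ corresponds to the tautological datum $\bigl(\cO_{\P(w)}(1),(x_1,\dots,x_m)\bigr)$, where $x_i\in H^0(\P(w)_\K,\cO(w_i))=\K[x_1,\dots,x_m]_{w_i}$. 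Since $m\ge2$, one has $\operatorname{Pic}(\P(w)_\K)=\Z\cdot[\cO(1)]$ and $H^0(\P(w)_\K,\cO(n))=\K[x_1,\dots,x_m]_n$ for all $n\in\Z$ (in particular $0$ for $n<0$); the first statement holds because $\operatorname{Pic}(\A^m_{\neq0})=0$ and $\Gm$-equivariant line bundles on $\A^m_{\neq0}$ are classified by the character of $\Gm$ acting on the fibre. Moreover $H^0(\P(w)_\K,\cO^\times)=\K^\times$, since $\P(w)_\K$ is proper and geometrically connected.

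Given a class in $P_{u,w}(\K)$, represented by $(f_1,\dots,f_m)$ with associated integer $e$, I would first produce the morphism at the level of the cones. The tuple $(f_1,\dots,f_m)$ defines a morphism $\A^m\to\A^m$ whose common zero locus equals $V(f_1,\dots,f_m)$; by property~(2) and the Nullstellensatz this is contained in $V(x_1,\dots,x_m)=\{0\}$, so the tuple restricts to a morphism $\psi\colon\A^m_{\neq0}\to\A^m_{\neq0}$. Property~(1) gives $f_i(\lambda^{w_1}x_1,\dots,\lambda^{w_m}x_m)=\lambda^{eu_i}f_i(x_1,\dots,x_m)$, i.e.\ $\psi(\lambda\cdot_w x)=\lambda^e\cdot_u\psi(x)$, so $\psi$ is equivariant along the endomorphism $\lambda\mapsto\lambda^e$ of $\Gm$ and descends to a morphism $\phi\colon\P(w)_\K\to\P(u)_\K$; on $S$-points $\phi$ sends $(L,(s_i))$ to $\bigl(L^{\otimes e},(f_i(s_1,\dots,s_m))\bigr)$, which is a legitimate $S$-point of $\P(u)_\K$ precisely because of~(1) (so that $f_i(s_1,\dots,s_m)\in H^0(S,(L^{\otimes e})^{\otimes u_i})$) and~(2) (so that these sections have no common zero). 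Replacing $(f_i)$ by $(c^{u_i}f_i)$ replaces $\psi$ by $(c\cdot_u-)\circ\psi$, hence replaces $\phi$ by a canonically isomorphic morphism (scale $L^{\otimes e}$ by $c$). Thus we obtain a well-defined map from $P_{u,w}(\K)/\K^\times$ to the set of isomorphism classes of morphisms $\P(w)_\K\to\P(u)_\K$, sending the class of $(f_i)$ to the morphism with the stated formula.

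For the inverse, given a morphism $\phi\colon\P(w)_\K\to\P(u)_\K$, I would pull back the tautological datum of $\P(u)_\K$: put $\cN:=\phi^*\cO_{\P(u)}(1)$ and $\sigma_i:=\phi^*x_i\in H^0(\P(w)_\K,\cN^{\otimes u_i})$, so that the $\sigma_i$ have no common zero. By the computation of the Picard group, $\cN\cong\cO(e)$ for a unique $e\in\Z$; if $e<0$ then every $H^0(\P(w)_\K,\cN^{\otimes u_i})$ vanishes, forcing all $\sigma_i=0$, which is absurd, so $e\ge0$. Fixing an isomorphism $\cN\isom\cO(e)$ — unique up to the action of $\operatorname{Aut}(\cO(e))=\K^\times$ — carries $\sigma_i$ to some $f_i\in H^0(\P(w)_\K,\cO(eu_i))=\K[x_1,\dots,x_m]_{eu_i}$, giving property~(1); the no-common-zero condition on the $\sigma_i$ translates (again by the Nullstellensatz) into property~(2); and rescaling the chosen isomorphism by $c\in\K^\times$ multiplies each $f_i$ by $c^{u_i}$, so $\phi$ yields a well-defined class in $P_{u,w}(\K)/\K^\times$. (If $e=0$ this is the constant morphism at $(f_1,\dots,f_m)\in\P(u)(\K)$, and property~(2) holds since $\sqrt{(f_1,\dots,f_m)}=(1)$.)

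Finally I would check the two constructions are mutually inverse: starting from $(f_i)$, the morphism $\phi$ constructed above has $\phi^*\cO_{\P(u)}(1)=\cO(e)$ with $\phi^*x_i=f_i$, so one recovers $(f_i)$; and starting from $\phi$, the morphism rebuilt from the extracted $(f_i)$ agrees with $\phi$ on the tautological object of $\P(w)_\K$, hence coincides with $\phi$ by the $2$-Yoneda lemma. Tracking that nothing in the construction depends on choices beyond the recorded $\K^\times$ yields the word ``canonical''. I expect the main obstacle to be the bookkeeping around the stacky line bundle $\cO(1)$ on $\P(w)_\K$: establishing $\operatorname{Pic}(\P(w)_\K)=\Z$ and the values of $H^0(\P(w)_\K,\cO(n))$, and correctly matching the $\K^\times$-indeterminacy in the choice of $\cN\isom\cO(e)$ with the action $c(f_1,\dots,f_m)=(c^{u_1}f_1,\dots,c^{u_m}f_m)$; one must also be slightly careful that ``the set of morphisms'' is read as the set of isomorphism classes of $1$-morphisms, which is exactly the level at which the $\K^\times$-quotient appears.
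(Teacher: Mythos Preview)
Your argument is correct. The paper does not give its own proof but simply cites \cite[Lemma~4.1]{BrNa} for the case $m=2$ and asserts that the same argument works in general; your approach via the functor-of-points description of $\P(w)$ together with the identification $\operatorname{Pic}(\P(w)_\K)\cong\Z$ (which is where the hypothesis $m\ge2$ enters) is the natural way to carry this out, and you correctly read the bijection as being from $P_{u,w}(\K)/\K^\times$ (as the phrase ``class of $(f_1,\dots,f_m)$'' signals) and the target as isomorphism classes of $1$-morphisms.
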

	
	\begin{proof}
		See \cite[Lemma 4.1]{BrNa} for a proof for $m=2$; the same proof works for general $m\ge2$.
	\end{proof}
	
	\subsection{Algebraic number theory}
        \label{ant}
	
	Let $\K$ denote a number field with ring of integers $\cO_{\K}$. We let $\fp$ range over integral prime ideals of $\K$, and in the case $\K=\Q$, we write $\fp=p$ for $p$ a prime number. 
	
	We recall the definitions of the Möbius function and the Dedekind zeta function of a number field. 
	
	\begin{definition}
	    We define the \textit{Möbius function of $\K$} to be 
		\begin{equation*}
		    \begin{aligned}
			\mu_{\K}: \{\text{integral ideals of }\K\} &\longrightarrow \{0,\pm 1\} \\
			I &\longmapsto \begin{cases}
				0, \text{ if } I\subset\fp^{2} \text{ for some } \fp;\ \\
				(-1)^{k},\text{ if } I=\fp_{1}\cdots\fp_{k},
			\end{cases}
		\end{aligned}
		\end{equation*}
		where $\fp_{1},\dots,\fp_{k}$ denote distinct prime ideals of $\K$.
	\end{definition}
	
	\begin{definition}
		We define the \textit{Dedekind zeta function of $\K$} to be 
		\begin{equation*}
		    \begin{aligned}
			\zeta_{\K}: \{t\in\mathbb{C}\mid \Re(t)>1\} &\longrightarrow \mathbb{C} \\
			t &\longmapsto \sum_{I\neq 0}(N(I))^{-t}=\prod_{\fp}(1-(N(\fp))^{-t})^{-1},
		\end{aligned}
		\end{equation*}
		where the sum ranges over all non-zero ideals $I$ of $\cO_{\K}$.
	\end{definition}

	\begin{notation}
		Let $I\subset\K$ a non-zero ideal. We denote by $\ord_{\fp}(I)$ the order of~$I$ at~$\fp$, where $I$ factors uniquely as
		\begin{equation*}
			I=\prod_{\fp}\fp^{\ord_{\fp}(I)}.
		\end{equation*}
		For $x\in\Q-\{0\}$, $\ord_{p}(x)$ denotes the order of $x$ at $p$, where 
		\begin{equation*}
			x=\prod_{p}p^{\ord_{p}(x)}.
		\end{equation*}
	\end{notation}
	
        Let $\Omega_{\K}$ denote the set of infinite places of $\K$.
	For every place $v$ of~$\K$, we normalize the $v$-adic norm as follows: for $v\in\Omega_{\K}$ we put $|x|_{v}=|x|^{N_{v}}$ where $N_{v}=1$ (resp.\ 2) if $v$ is real (resp.\ complex), and for finite places we put
	\begin{equation*}
		|x|_{\fp}=N(\fp)^{-\ord_{\fp}(x)}.
	\end{equation*}
	Note that $[\K:\Q]=r_{1}+2r_{2}$ where $r_{1}$ denotes the number of real places and $r_{2}$ the number of complex places.

	\subsection{The height function}
	
	We define the scaling ideal and the height function as in \cite[Definition 3.2]{De}. Let $w=(w_{1},\dots,w_{m})$ be a tuple of positive integers.
	
	\begin{definition}
          \label{scalingideal}
		Let $x\in \K^{m}-\{0\}$. We define the \textit{scaling ideal of $x$ with respect to $w$} to be $\fI_{w}(x)$, where 
		\begin{equation*}
			\fI_{w}^{-1}(x):=\{a\in \K : a_{*}x\in \cO_{\K}\}
		\end{equation*}
		and $a$ acts on $x$ with the weight $w$ action.
	\end{definition}
	
	\begin{lemma}
		\label{scalingidealother}
		The scaling ideal satisfies the following properties
		\begin{enumerate}
			\item $\fI_{w}(a_{*}x)=a\fI_{w}(x)$ for $a\in\K^{\times}$. Moreover, $\fI_{w}(x)$ is an integral ideal of $\cO_{\K}$ for $x=(x_{1},\dots,x_{m})\in \cO_{\K}^{m}$.
			\item We can also express the scaling ideal as 
			\begin{equation*}
				\fI_{w}(x)=\prod_{\fp} \fp^{\min_{1\leq i\leq m}\floor*{\frac{\ord_{\fp}x_{i}}{w_{i}}}}.
			\end{equation*}
			for varying prime ideals $\fp\subset\K$.
		\end{enumerate}
	\end{lemma}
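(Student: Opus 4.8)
The plan is to prove the two properties of the scaling ideal by unwinding Definition~\ref{scalingideal} place by place, reducing everything to a computation at a single prime~$\fp$.

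First I would establish part~(2), since part~(1) will then follow easily. By definition, $\fI_w^{-1}(x) = \{a \in \K : a_* x \in \cO_\K^m\}$, and $a_* x = (a^{w_1}x_1, \dots, a^{w_m}x_m)$. A fractional ideal is determined by its valuations at all primes, so it suffices to compute $\ord_\fp$ of $\fI_w^{-1}(x)$ for each $\fp$. The condition $a_* x \in \cO_\K^m$ says $a^{w_i} x_i \in \cO_\K$ for all $i$, i.e.\ $w_i \ord_\fp(a) + \ord_\fp(x_i) \ge 0$ at every prime $\fp$; here I should be slightly careful because $a$ ranges over all of $\K$, not just $\K^\times$, but $\fI_w^{-1}(x)$ is a finitely generated $\cO_\K$-submodule of $\K$ (it is nonzero since $x \ne 0$ forces at least one $x_i \ne 0$, and rescaling by a suitable element of $\cO_\K$ clears denominators) so it is a fractional ideal and is again determined by the valuations of its nonzero elements. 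For a given $\fp$, the set of admissible values of $\ord_\fp(a)$ (for $a$ whose other valuations are large enough to satisfy the remaining constraints, which can always be arranged by the approximation/CRT argument) is $\{n \in \Z : n \ge -\lfloor \ord_\fp(x_i)/w_i\rfloor \text{ for all } i\}$, whose minimum is $-\min_i \lfloor \ord_\fp(x_i)/w_i\rfloor$. Hence $\ord_\fp(\fI_w^{-1}(x)) = -\min_i \lfloor \ord_\fp(x_i)/w_i\rfloor$, and taking inverses gives $\ord_\fp(\fI_w(x)) = \min_i \lfloor \ord_\fp(x_i)/w_i\rfloor$, which is the claimed product formula.

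Next, part~(1). For the homogeneity statement, note that $\fI_w^{-1}(a_* x) = \{b \in \K : b_*(a_* x) \in \cO_\K^m\}$, and since $b_*(a_* x) = (ba)_* x$, this set equals $\{b : (ba)_* x \in \cO_\K^m\} = a^{-1}\fI_w^{-1}(x)$; inverting gives $\fI_w(a_* x) = a\,\fI_w(x)$. Alternatively, one reads this off immediately from the valuation formula in part~(2), since $\ord_\fp(a^{w_i}x_i) = w_i\ord_\fp(a) + \ord_\fp(x_i)$ and $\lfloor (w_i \ord_\fp(a) + \ord_\fp(x_i))/w_i \rfloor = \ord_\fp(a) + \lfloor \ord_\fp(x_i)/w_i \rfloor$, so each floor shifts by the constant $\ord_\fp(a)$, hence so does the minimum. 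Finally, if $x \in \cO_\K^m$ then $\ord_\fp(x_i) \ge 0$ for all $i$ and all $\fp$, so $\lfloor \ord_\fp(x_i)/w_i\rfloor \ge 0$, whence every exponent $\min_i\lfloor \ord_\fp(x_i)/w_i\rfloor$ in the product is $\ge 0$ and $\fI_w(x)$ is an integral ideal of $\cO_\K$ (equivalently: $1 \in \fI_w^{-1}(x)$ directly, since $1_* x = x \in \cO_\K^m$).

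The only genuinely delicate point is the reduction of the global condition ``$a \in \K$ with $a_* x \in \cO_\K^m$'' to independent local conditions at each $\fp$; I expect this to be the main obstacle, and it is handled by observing that $\fI_w^{-1}(x)$ is a fractional ideal (so determined by its $\fp$-adic valuations) together with the strong approximation / Chinese remainder theorem argument showing that for each fixed $\fp$ the valuation $\ord_\fp(a)$ can be chosen as small as the per-prime constraint allows while keeping the constraints at all other primes satisfied. Everything else is bookkeeping with floor functions.
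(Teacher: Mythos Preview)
Your argument is correct and, at its core, matches the paper's: both proofs reduce the determination of $\fI_w(x)$ to the per-prime inequality $\ord_\fp(a)\ge -\lfloor \ord_\fp(x_i)/w_i\rfloor$ (equivalently $\ord_\fp\fa\le \lfloor \ord_\fp(x_i)/w_i\rfloor$) and read off the exponent $\min_i\lfloor \ord_\fp(x_i)/w_i\rfloor$. The only organizational difference is that the paper first rephrases the definition as ``$\fI_w(x)$ is the intersection of all fractional ideals $\fa$ with $x_i\in\fa^{w_i}$ for all $i$'' and then computes that intersection, whereas you work directly with the valuation of elements of $\fI_w^{-1}(x)$.

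One simplification: your worry about needing CRT/strong approximation is unnecessary. Once you have shown that $a\in\fI_w^{-1}(x)$ if and only if $\ord_\fq(a)\ge n_\fq:=-\min_i\lfloor \ord_\fq(x_i)/w_i\rfloor$ for every prime $\fq$, you have \emph{identified} $\fI_w^{-1}(x)$ with the fractional ideal $\prod_\fq \fq^{n_\fq}$ on the nose; there is nothing further to approximate. Inverting gives part~(2) immediately. Your two arguments for part~(1) are both fine; the paper simply cites \cite[Proposition~3.3]{De} for that part.
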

	
	\begin{proof}
		\begin{enumerate}
			\item See \cite[Proposition 3.3]{De}.
			\item $\fI_{w}(x)$ is by definition the intersection of all fractional ideals $\fa\subset\K$ satisfying $x_{i}\in \fa^{w_{i}}$ for all $i$. We have 
			\begin{equation*}
				\begin{split}
					x_{i}\in \fa^{w_{i}}&\iff \ord_{\fp}x_{i}\geq w_{i} \ord_{\fp}\fa \,\text{  for all } \fp\\&\iff \ord_{\fp}\fa\leq \floor*{\frac{\ord_{\fp}x_{i}}{w_{i}}}\text{ for all } \fp\\&\iff \fa\supseteq \fp^{\floor*{\frac{\ord_{\fp}x_{i}}{w_{i}}}}\text{ for all } \fp.
				\end{split}
			\end{equation*}
			Thus, $\fI_{w}(x)$ is the intersection of all fractional ideals $\fa\subset\K$ satisfying $\fa\supseteq \bigcap_{\fp}\fp^{\min_{1\leq i\leq m}\floor*{\frac{\ord_{\fp}x_{i}}{w_{i}}}}=\prod_{\fp}\fp^{\min_{1\leq i\leq m}\floor*{\frac{\ord_{\fp}x_{i}}{w_{i}}}}$. Taking the intersection yields the result.
		\end{enumerate}
	\end{proof}
	
	\begin{definition}
		\label{size}
		Let $x\in\K^{m}-\{0\}$. We define the \textit{height of $x$ with respect to $w$} to be 
		\begin{equation*}
			S_{w}(x):= \dfrac{1}{N(\fI_{w}(x))}S_{w,\infty}(x), 
		\end{equation*}
		where we set 
		\begin{equation*}
			S_{w,\infty}(x):= \prod_{v\in \Omega_{\K}}\max_{1\le i\le m}|x_{i}|_{v}^{\frac{1}{w_{i}}}.
		\end{equation*}
	\end{definition}
	
	For some background on height functions, it is worth mentioning other works around heights on different spaces. For general machinery of heights on projective spaces, see \cite[Section 8.5]{Si}. On a more abstract level, \cite[Section 2.2]{ElSaZu} defines heights on a very general type of stacks.
	
	\section{Ingredients}
        \label{sec2}
	
	In this section we will introduce various new notions and study some of their properties, which will be used in the proof of our main result in Section~\ref{sec3}.
	
	In the rest of the paper we fix the following notation. Let $\K$ denote a number field, $\cO_{\K}$ the ring of integers in $\K$, $D_{\K}$ the absolute value of the discriminant, $N:=[\K:\Q]$, $\Cl(\K)$ the group of ideal classes, and $\mu(\K)$ the group of roots of unity. Let $w=(w_{1},\dots,w_{m})$ and $u=(u_{1},\dots,u_{m})$ be two tuples of positive integers with $|w|:=w_{1}+\cdots+w_{m}$ such that $|w|\geq 1$, $\phi: \P(w)_{\K}\to\P(u)_{\K}$ a non-constant morphism between weighted projective spaces and $e$ the integer defined in Lemma~\ref{morphism}. If $\fa$ is a fractional ideal of~$\K$, we write $\fa^{w}:=\fa^{w_{1}}\times\cdots\times\fa^{w_{m}}$. We further keep the notation from the beginning of Section~\ref{ant}.
	
	\subsection{Definitions and notations}
	
	Our first step will be to name several sets that we will often make use of throughout the paper.
	
	\begin{notation}
		$Q_{w}:=\cO_{\K}^{\times}\backslash (\K^{m}-\{0\})$, where $\cO_{\K}^{\times}$ acts with the weight $w$ action.
	\end{notation}
	
	We set $R_{\K}$ to be a set of representatives of $\Cl(\K)$; without loss of generality, we assume every element of $R_{\K}$ is an integral ideal. Since the class group of $\K$ is finite, the set $R_{\K}$ is finite. For the remainder of this paper, $c$ will denote a (varying) element of~$\Cl(\K)$ and $\fa\in R_{\K}$ will be the unique element with $[\fa]=c$.

	\begin{notation}
		\label{qwa}
		We denote $Q_{w}^{\fa}:=\{x\in Q_{w} \mid  \fI_{w}(x)=\fa\}$. 
	\end{notation}

    \begin{remark}
    	\begin{enumerate}
    	    \item $Q_{w}^{\fa}$ is well defined by Lemma~\ref{scalingidealother} (1) and the fact that multiplication of a fractional ideal of a ring $R$ by a unit of $R$ leaves the ideal unchanged.
    		\item $Q_{w}^{\fa}\cong\{x\in\P(w)(\K)\mid [\fI_{w}(x)]=c\}$ by Lemma~\ref{scalingidealother} (1).
    		\item $Q_{w}^{\fa}=\{x\in\cO_{\K}^{\times}\backslash (\fa^{w}-\{0\})\mid \fI_{w}(x)=\fa\}$. This follows from $\fI_{w}(x)=\fa\implies \fI_{w}(x)\subseteq\fa\implies x\in \fa^{w}$, by definition of the scaling ideal.
    	\end{enumerate}
    \end{remark}
	
	\begin{definition}
		\label{discrepancy}
                We define a map
	\begin{align*}
        \delta_{\phi}:\P(w)(\K)&\longrightarrow\{\text{fractional ideals of }\K\}\\
	    x&\longmapsto\fI_{u}(\phi (x))\cdot\fI_{w}(x)^{-e}.
	\end{align*}
        For $x\in\P(w)(\K)$, the fractional ideal $\delta_{\phi}(x)$ is called the \emph{discrepancy ideal} of~$x$ relative to~$\phi$.
	The image of $\delta_{\phi}$ is called the \textit{set of discrepancy ideals of the morphism~$\phi$} and is denoted by $\cD_{\phi}$.
	\end{definition}
	
	\begin{remark}
		 The quotient $\fI_{u}(\phi (x))\cdot\fI_{w}(x)^{-e}$ is well defined for $x\in \P(w)(\K)$, even though the individual factors are not. To see this, note that for $\alpha\in\cO_{\K}^{\times}$ we have $\phi(\alpha x)=\alpha^{e}\phi(x)$, where $\alpha$ acts on $x$ with the weight $w$ action and $\alpha^{e}$ acts on $\phi(x)$ with the weight $u$ action. Then, by Lemma~\ref{scalingidealother} (1), $\fI_{u}(\phi(\alpha x))\cdot\fI_{w}(\alpha x)^{-e}=\alpha^{e}\fI_{u}(\phi(x))\cdot\alpha^{-e}\fI_{w}(x)^{-e}$, and the result follows.
	\end{remark}

	\begin{lemma}
		\label{finiteness}
		If $e=1$ or $w=(1,\ldots,1)$, then the set $\cD_{\phi}$ is finite.
	\end{lemma}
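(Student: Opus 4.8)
The plan is to evaluate the discrepancy ideal one prime at a time. Let $(f_1,\dots,f_m)$ represent $\phi$ as in Lemma~\ref{morphism}, so each $f_i$ is homogeneous of degree $eu_i$. By Lemma~\ref{scalingidealother}(2), for a representative $x\in\K^m-\{0\}$ of a point of $\P(w)(\K)$ we have
\[
\delta_\phi(x)=\prod_{\fp}\fp^{n_\fp(x)},\qquad
n_\fp(x):=\min_{1\le i\le m}\floor*{\frac{\ord_\fp f_i(x)}{u_i}}-e\min_{1\le j\le m}\floor*{\frac{\ord_\fp x_j}{w_j}}.
\]
Exactly as in the Remark after Definition~\ref{discrepancy} (using $f_i(\lambda_*x)=\lambda^{eu_i}f_i(x)$ and Lemma~\ref{scalingidealother}(1)), each $n_\fp(x)$ is unchanged under the weight~$w$ action of $\K^\times$, so $n_\fp$ is a well-defined function on $\P(w)(\K)$ and may be computed from a representative adapted to~$\fp$: after scaling I may assume $\min_j\floor*{\ord_\fp x_j/w_j}=0$, i.e.\ $\ord_\fp x_j\ge0$ for all $j$ and $\ord_\fp x_{j_0}<w_{j_0}$ for some $j_0$, and then $n_\fp(x)=\min_i\floor*{\ord_\fp f_i(x)/u_i}$. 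Finiteness of $\cD_\phi$ then follows from two claims: (a) for each $\fp$, the integer $n_\fp(x)$ lies in a bounded interval independent of $x$; and (b) $n_\fp(x)=0$ for all $\fp$ outside a finite set $S$ independent of $x$.

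I fix once and for all polynomials $g_{ij}\in\K[x_1,\dots,x_m]$ and integers $N_i\ge1$ with $x_i^{N_i}=\sum_j g_{ij}f_j$, which exist since property~(2) of Lemma~\ref{morphism} gives $(x_1,\dots,x_m)\subseteq\sqrt{(f_1,\dots,f_m)}$; let $S$ be the finite set of primes at which some coefficient of some $f_i$ or some $g_{ij}$ is not integral (it will be enlarged once more below when $e=1$). Claim~(a) is routine: with a $\fp$-adapted representative, $\ord_\fp f_i(x)$ is bounded below in terms of the coefficients of $f_i$ alone (the coordinates being $\fp$-integral), which bounds $n_\fp(x)$ from below; and applying $\ord_\fp$ to $x_{j_0}^{N_{j_0}}=\sum_j g_{j_0 j}(x)f_j(x)$, together with $\ord_\fp x_{j_0}\le w_{j_0}-1$ and the corresponding lower bound for $\ord_\fp(g_{j_0 j}(x))$, bounds $\min_j\ord_\fp f_j(x)$, hence $n_\fp(x)$, from above. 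For claim~(b), fix $\fp\notin S$; a $\fp$-adapted representative gives $\ord_\fp f_i(x)\ge0$, so $n_\fp(x)\ge0$, and it remains to exhibit one $i$ with $\ord_\fp f_i(x)<u_i$. I do this by contradiction, assuming $\ord_\fp f_i(x)\ge u_i$ for all $i$ and contradicting the normalisation $\min_j\floor*{\ord_\fp x_j/w_j}=0$.

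\emph{If $w=(1,\dots,1)$}, the normalisation says $\ord_\fp x_{j_0}=0$, so $\ord_\fp$ of $x_{j_0}^{N_{j_0}}=\sum_j g_{j_0 j}(x)f_j(x)$ is $0$ on the left, while on the right it is $\ge\min_j(\ord_\fp(g_{j_0 j}(x))+\ord_\fp f_j(x))\ge\min_j\ord_\fp f_j(x)\ge1$ — a contradiction. \emph{If $e=1$}, then $e\ge1$ (as $\phi$ is non-constant), each $f_i$ is homogeneous of positive degree, and property~(2) forces $V(f_1,\dots,f_m)=\{0\}$ in $\A^m$; hence $\K[x_1,\dots,x_m]/(f_1,\dots,f_m)$ is finite over $\K$, so by graded Nakayama $\K[x_1,\dots,x_m]$ is a finitely generated graded $\K[f_1,\dots,f_m]$-module, and each $x_i$ satisfies a \emph{homogeneous} monic relation $x_i^{d_i}+\sum_{l=0}^{d_i-1}a_{il}(f_1,\dots,f_m)x_i^l=0$ in which every monomial $y^\gamma$ of $a_{il}$ satisfies $\sum_j\gamma_j u_j=(d_i-l)w_i$ (from the degree count $\sum_j\gamma_j eu_j=(d_i-l)w_i$ together with $e=1$; this is the only place $e=1$ is used). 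Enlarging $S$ to kill the denominators of the $a_{il}$, the assumption $\ord_\fp f_j(x)\ge u_j$ yields $\ord_\fp(a_{il}(f(x)))\ge(d_i-l)w_i$, so
\[
d_i\,\ord_\fp x_i=\ord_\fp(x_i^{d_i})\ge\min_{0\le l<d_i}\bigl((d_i-l)w_i+l\,\ord_\fp x_i\bigr).
\]
The right-hand side is affine in $l$; if $\ord_\fp x_i<w_i$ it is strictly decreasing, with minimum $w_i+(d_i-1)\ord_\fp x_i$, and the displayed inequality then forces $\ord_\fp x_i\ge w_i$ — absurd. So $\ord_\fp x_i\ge w_i$ for every $i$, whence $\min_j\floor*{\ord_\fp x_j/w_j}\ge1$, contradicting the normalisation. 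In either case some $i$ has $\ord_\fp f_i(x)<u_i$, hence $n_\fp(x)=\min_i\floor*{\ord_\fp f_i(x)/u_i}=0$, proving~(b). Combining (a) and (b) gives $\delta_\phi(x)=\prod_{\fp\in S}\fp^{n_\fp(x)}$, which ranges over a finite set of fractional ideals; since $x$ was arbitrary, $\cD_\phi$ is finite.

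The degree bookkeeping and the two uniform estimates at the primes of $S$ are routine. The one genuinely delicate step is the case $e=1$ of~(b): deducing finiteness of $F=(f_1,\dots,f_m)$ from the radical-membership property~(2) and extracting, for each coordinate, a \emph{homogeneous} monic integral dependence relation with the stated degree constraint. (The hypothesis is necessary: for $\phi\colon\P(2,1)\to\P(1,1)$ with $(x_1,x_2)\mapsto(x_1,x_2^2)$ one has $e=2$, and taking $x=(\varpi,\varpi)$ with $\ord_\fp\varpi=1$ gives $n_\fp(x)=1$, so $\cD_\phi$ is infinite.)
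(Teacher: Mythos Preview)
Your proof is correct and rests on the same core idea as the paper's: the radical condition in Lemma~\ref{morphism}(2) forces homogeneous integral-dependence relations for the $x_i$ over $\K[f_1,\ldots,f_m]$, and these relations bound the discrepancy. The execution differs in packaging. The paper works globally with ideals, citing \cite[\S6]{BrNa} for the relations and producing explicit ideals $\fI_u(\fa_1,\ldots,\fa_m)$ and $\fI_{(\nu_1,\ldots,\nu_m)}(\fd_1,\ldots,\fd_m)^{-1}$ that sandwich every $\delta_\phi(x)$; you instead work prime-by-prime with valuations and derive the homogeneous integrality relations yourself via graded Nakayama, which makes your argument self-contained. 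Your treatment of the case $w=(1,\ldots,1)$ is also more elementary than the paper's: you use only the raw radical membership $x_{j_0}^{N_{j_0}}\in(f_1,\ldots,f_m)$, whereas the paper applies the full integrality machinery uniformly (with $\delta_i=e$) and only afterwards specializes. The trade-off is that the paper's explicit sandwiching ideals may be useful if one later wants quantitative control over $\cD_\phi$, while your local argument is lighter and avoids the external reference. Your closing counterexample agrees with the paper's Remark following the lemma.
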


	\begin{proof}
        We view $\K[x_1,\ldots,x_m]$ and $\K[y_1,\ldots,y_m]$ as graded $\K$-algebras with $x_i$ and $y_j$ homogeneous of degrees~$w_i$ and~$u_j$, respectively.  By Lemma~\ref{morphism}, there is a positive integer~$e$ such that the morphism~$\phi$ is represented by polynomials $f_1,\ldots,f_m\in K[x_1,\ldots,x_m]$, with $f_j$ homogeneous of degree~$eu_j$ for $1\le j\le m$.  For $1\le i\le m$, we write $w_i/e=\nu_i/\delta_i$ with $\nu_i$, $\delta_i$ coprime positive integers.

        For $1\le j\le m$, let $\fa_j$ be the fractional ideal generated by the coefficients of~$f_j$.  Then for all $z\in\K^m\setminus\{0\}$, we have
        \[
        \fI_u(\phi(z))\fI_w(z)^{-e}\subseteq \fI_u(\fa_1,\ldots,\fa_m)
        \]
        by the generalization of \cite[Lemma~6.1]{BrNa} to $m$ variables.

        By the arguments in \cite[\S6]{BrNa}, there are integers $\mu_i>0$ and polynomials $h_{i,l}\in\K[y_1,\ldots,y_m]$ for $1\le i\le m$ and $1\le l\le\mu_i$, with $h_{i,\mu_i}\ne0$, such that each $h_{i,l}$ is homogeneous of degree $l\nu_i$ and
        \[
        x_i^{\mu_i \delta_i} + \sum_{l=1}^{\mu_i} h_{i,l}(f_1,\ldots,f_m)x_i^{(\mu_i-l)\delta_i}=0
        \quad\text{in }\K[x_1,\ldots,x_m].
        \]
        For $1\le i\le m$ and $1\le l\le \mu_i$, let $\fc_{i,l}$ be the fractional ideal generated by the coefficients of $h_{i,l}$.  For $1\le i\le m$, we write
        \[
        \fd_i=\fI_{(1,\ldots,\mu_i)}(\fc_{i,1},\ldots,\fc_{i,\mu_i}).
        \]
        Then for all $z\in\K^m\setminus\{0\}$, we have
        \[
        \fI_{(\nu_1,\ldots,\nu_m)}(z_1^{\delta_1},\ldots,z_m^{\delta_m}) \subseteq \fI_{(\nu_1,\ldots,\nu_m)}(\fd_1,\ldots,\fd_m) \fI_u(\phi(z))
        \]
        by the generalization of \cite[Corollary~6.3]{BrNa} to $m$ variables.

        In the case $e=1$, we have $\nu_i=w_i$ and $\delta_i=1$ for $1\le i\le m$, so $\fI_{(\nu_1,\ldots,\nu_m)}(z_1^{\delta_1},\ldots,z_m^{\delta_m}) = \fI_w(z)$.  On the other hand, in the case $w=(1,\ldots,1)$, we have $\nu_i=1$ and $\delta_i=e$ for $1\le i\le n$, so $\fI_{(\nu_1,\ldots,\nu_m)}(z_1^{\delta_1},\ldots,z_m^{\delta_m}) = \fI_{(1,\ldots,1)}(z_1^e,\ldots,z_m^e) = \fI_{(1,\ldots,1)}(z)^e$.  In both cases, we obtain
        \[
        \fI_u(\phi(z))\fI_w(z)^{-e}\supseteq \fI_{(\nu_1,\ldots,\nu_m)}(\fd_1,\ldots,\fd_m)^{-1}.
        \]

        We conclude that every ideal in $\cD_{\phi}$ is contained in $\fI_u(\fa_1,\ldots,\fa_m)$ and contains $\fI_{(\nu_1,\ldots,\nu_m)}(\fd_1,\ldots,\fd_m)^{-1}$.  This implies that $\cD_\phi$ is finite.
	\end{proof}

        \begin{remark}
        The condition ``$e=1$ or $w=(1,\ldots,1)$'' in Lemma~\ref{finiteness} is necessary.  To see this, suppose $e>1$ and (without loss of generality) $w_1>1$.  For $1\le j\le m$, let $c_j$ denote the coefficient of $x_1^{eu_j/w_1}$ in~$f_j$ if $w_1\mid eu_j$, and $c_j=0$ otherwise.  Because $(f_1,\ldots,f_m)$ defines a morphism, not all $c_j$ are zero.  Let $\fp$ be a prime ideal of~$K$ with $\ord_\fp(c_j)\ge0$ for all $j$.  We choose an element $a\in\K^\times$ with $\ord_\fp(a)=1$ and write $z = (a^{w_1-1},0,\ldots,0)\in\K^m$.
        Then we have $\ord_\fp(\fI_w(z))=0$.  On the other hand, for $1\le j\le m$ we have $f_j(z)=c_j (a^{w_1-1})^{eu_j/w_1}$, and hence $\ord_\fp(f_j(z))\ge\ord_\fp(c_j)+u_j$ because $(w_1-1)e/w_1=(1-1/w_1)e\ge(1-1/2)\cdot 2=1$.  This gives $\ord_\fp(\fI_u(\phi(z)))\ge1$, so $\cD_\phi$ contains an ideal divisible by~$\fp$.  Since there are infinitely many $\fp$ as above, we conclude that $\cD_\phi$ is infinite.
        \end{remark}

	We fix an ideal $\fd\in\cD_{\phi}$.
	
	\begin{notation}
		\label{qwad}
		We denote
		\begin{equation*}
			\begin{split}
				Q_{w}^{\fa,\fd}&:=\{x\in Q_{w}^{\fa}\, |\, \fI_{u}(\phi (x))\cdot\fI_{w}(x)^{-e}=\fd\}\\&=\{x\in Q_{w}^{\fa}\mid  \fI_{u}(\phi (x))=\fd\fa^{e}\}.
			\end{split}
		\end{equation*}
	\end{notation}
	
	We now introduce $\fp$-adic versions of some of the above objects.
        First, we define a map
        \[
        \fI_{w,\fp}:\K_{\fp}^m-\{0\}\longrightarrow\{\text{fractional ideals of }\cO_{\K,\fp}\}
        \]
        in the same way as in Definition~\ref{scalingideal}.  Equivalently, we have
        \[
        \fI_{w,\fp}(x) = (\fp\cO_{\K,\fp})^{\epsilon_{w,\fp}(x)},
        \]
        where $\epsilon_{w,\fp}:\K_{\fp}^m-\{0\}\to\Z$ is defined by
        \[
        \epsilon_{w,\fp}(x) = \min_{1\le i\le m}\floor*{\frac{\ord_{\fp} x_i}{w_i}}.
        \]
        It is straightforward to check that $\epsilon_{w,\fp}$ is continuous.
	
	\begin{notation}
		We denote $Q_{w,\fp}:=\cO_{\K,\fp}^{\times}\backslash(\K_{\fp}^{m}-\{0\})$, where again $\cO_{\K,\fp}^{\times}$ acts with the weight~$w$ action.
	\end{notation}
	
	Let us decompose $\fd=\prod_{\fp}\fp^{j_{\fp}}$ and $\fa=\prod_{\fp}\fp^{k_{\fp}}$. It then follows that $Q_{w}^{\fa}=\{x\in Q_{w}\mid\text{for all }  \fp: \fI_{w,\fp}(x)=\fa_{\fp}\}$, where $\fa_{\fp}=\fa\cO_{\K,\fp}=(\fp\cO_{\K,\fp})^{k_{\fp}}$.
	
	\begin{notation}
		 We denote $R_{\fp}^{\fa}:=\left\{x\in \K_{\fp}^{m}-\{0\}\mid \epsilon_{w,\fp}(x) = k_{\fp}\right\}$.
	\end{notation}
	
	\begin{lemma}
		\label{primitive}
                \begin{enumerate}
		\item We have $R_{\fp}^{\fa}=(\fa_{\fp})^{w}-(\fp\fa_{\fp})^{w}$.
		\item $R_{\fp}^{\fa}$ is compact.
                \end{enumerate}
	\end{lemma}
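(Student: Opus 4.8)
The plan is to unwind the definitions on the $\fp$-adic side and reduce part (1) to a pointwise statement about the function $\epsilon_{w,\fp}$, then deduce compactness in part (2) from the explicit description in part (1) together with the fact that the sets involved are defined by non-strict valuation inequalities.

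For part (1), first I would recall that $R_{\fp}^{\fa} = \{x\in\K_{\fp}^m-\{0\} : \epsilon_{w,\fp}(x) = k_{\fp}\}$, where $k_{\fp} = \ord_{\fp}(\fa)$. Since $\epsilon_{w,\fp}(x) = \min_{1\le i\le m}\floor*{\ord_{\fp}x_i / w_i}$, the condition $\epsilon_{w,\fp}(x)\ge k_{\fp}$ is equivalent to $\floor*{\ord_{\fp}x_i/w_i}\ge k_{\fp}$ for all $i$, i.e.\ $\ord_{\fp}x_i\ge k_{\fp}w_i$ for all $i$ (using that $k_{\fp}$ is an integer, so $\floor*{t}\ge k_{\fp}\iff t\ge k_{\fp}$ for $t=\ord_{\fp}x_i/w_i$), which says exactly $x_i\in (\fp\cO_{\K,\fp})^{k_{\fp}w_i}=(\fa_{\fp})^{w_i}$ for all $i$, that is, $x\in(\fa_{\fp})^w$. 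Likewise $\epsilon_{w,\fp}(x)\ge k_{\fp}+1$ is equivalent to $x\in(\fp\fa_{\fp})^w$. Therefore $\{x:\epsilon_{w,\fp}(x)=k_{\fp}\} = (\fa_{\fp})^w\setminus(\fp\fa_{\fp})^w$, and one checks that removing $(\fp\fa_{\fp})^w$ automatically removes $0$, giving the stated equality $R_{\fp}^{\fa}=(\fa_{\fp})^w-(\fp\fa_{\fp})^w$.

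For part (2), I would argue that $(\fa_{\fp})^w = (\fa_{\fp})^{w_1}\times\cdots\times(\fa_{\fp})^{w_m}$ is a product of fractional ideals of the complete discrete valuation ring $\cO_{\K,\fp}$; each such fractional ideal is a compact open subset of $\K_{\fp}$ (being, up to scaling by a uniformizer power, equal to the compact ring $\cO_{\K,\fp}$), so $(\fa_{\fp})^w$ is compact by Tychonoff. Then $R_{\fp}^{\fa} = (\fa_{\fp})^w\setminus(\fp\fa_{\fp})^w$; by part (1) this equals $\{x\in(\fa_{\fp})^w : \epsilon_{w,\fp}(x)=k_{\fp}\}$, which is closed in $(\fa_{\fp})^w$ by continuity of $\epsilon_{w,\fp}$ (noted just above in the excerpt). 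A closed subset of a compact space is compact, so $R_{\fp}^{\fa}$ is compact.

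I do not expect any serious obstacle here; the only mildly delicate point is the bookkeeping in part (1) — keeping straight that $\floor*{t}\ge n\iff t\ge n$ precisely because $n\in\Z$, and checking that the zero vector lies in $(\fp\fa_{\fp})^w$ so that subtracting it is harmless — and in part (2) making sure to invoke that $R_{\fp}^{\fa}$ is in fact \emph{closed} (not merely a set-theoretic difference) via the continuity of $\epsilon_{w,\fp}$ rather than trying to argue it is open-and-closed directly.
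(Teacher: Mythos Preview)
Your argument is correct and is exactly the ``straightforward verification'' the paper leaves to the reader; there is nothing to add for part~(1). For part~(2) your conclusion is right, but note a small wrinkle: $\epsilon_{w,\fp}$ is only defined on $\K_{\fp}^m-\{0\}$, so continuity gives closedness in $(\fa_{\fp})^w-\{0\}$ rather than in $(\fa_{\fp})^w$ directly --- the cleanest fix is simply to observe that $(\fp\fa_{\fp})^w$ is open in $\K_{\fp}^m$, so its complement intersected with the compact set $(\fa_{\fp})^w$ is closed there, hence compact.
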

	
	\begin{proof}
		This is a straightforward verification.
	\end{proof}
	
	\begin{notation}
		We denote $Q_{\fp}^{\fa}:=\cO_{\K,\fp}^{\times}\backslash R_{\fp}^{\fa}$. 
	\end{notation}
	
	We then consider the following diagram, where the maps are the canonical ones.
	\begin{equation*}
		\xymatrix{Q_{w} \ar[r]^{\pi_{\fp}} & Q_{w,\fp}\\
			Q_{w}^{\fa}\ar@{^(->}[u] \ar[r] & Q_{\fp}^{\fa}\ar@{^(->}[u]^{q}}
	\end{equation*}
	By Lemma~\ref{primitive}, $R_{\fp}^{\fa}$ is simply the subset of $\K_{\fp}^{m}$ defined by the equality $\fI_{w,\fp}(x)=\fa_{\fp}$, the local analogue of the requirement that $\fI_{w}(x)=\fa$. It is then clear that $Q_{w}^{\fa}=\bigcap_{\fp}\pi_{\fp}^{-1}Q_{\fp}^{\fa}$.
	
	We now derive a similar expression for $Q_{w}^{\fa,\fd}$. To do so, we first define a map
        \[
        \delta_{\phi,\fp}:\P(w)(\K_p)\longrightarrow\Z
        \]
        by
        \begin{align*}
        \delta_{\phi,\fp}(x) &:= \ord_{\fp}(\fI_{u,\fp}(\phi(x))\cdot\fI_{w,\fp}(x)^{-e})\\
        &= \epsilon_{u,\fp}(\phi(x)) - e\epsilon_{w,\fp}(x).
        \end{align*}
        This is a local analogue of the map $\delta_{\phi}$.  It is straightforward to check that $\delta_{\phi,\fp}$ is well defined and continuous.
        We then use the following diagram, in which $[\cdot]$ is the canonical map:
	\begin{equation*}
		\xymatrix{R_{\fp}^{\fa}\ar@{^(->}[r]\ar@{->>}[d] & \K_{\fp}^{m}-\{0\}\ar@{->>}[d]^{[\cdot]}\\
			Q_{\fp}^{\fa} \ar[r]& \P(w)(\K_{\fp})\ar[d]^{\delta_{\phi,\fp}}\\  & \Z \\}
	\end{equation*}
	
	\begin{notation}
		\label{rpkpjp}
		We denote $R_{\fp}^{\fa,\fd}:=\{x\in R_{\fp}^{\fa}\, |\, \delta_{\phi,\fp}([x])=j_{\fp}\}$.
	\end{notation}
	
	\begin{notation}
		We denote $Q_{\fp}^{\fa,\fd}:=\cO_{\K,\fp}^{\times}\backslash R_{\fp}^{\fa,\fd}$.
	\end{notation}
	
	We conclude that $Q_{w}^{\fa,\fd}=\bigcap_{\fp}\pi_{\fp}^{-1} Q_{\fp}^{\fa,\fd}$ by the same argument as above, combining the requirements that the scaling ideal should be $\fa$ and the discrepancy ideal should be $\fd$.

	\subsection{Moduli and periodic sets}
	
	Our next step will be to introduce some moduli together with the remaining few relevant definitions, which we will utilise to ultimately study our main counting issue.
	
	By construction, working with $Q_{\fp}^{\fa,\fd}$ (resp.\ $Q_{\fp}^{\fa}$) is equivalent to working with subsets of $R_{\fp}^{\fa,\fd}$ (resp.\ $R_{\fp}^{\fa}$) that are stable under multiplication by $\cO_{\K,\fp}^{\times}$. For simplicity we will only refer to $R_{\fp}^{\fa,\fd}$ (resp.\ $R_{\fp}^{\fa}$), but note that analougous results hold for $Q_{\fp}^{\fa,\fd}$ (resp.\ $Q_{\fp}^{\fa}$).
	
	\begin{remark}
		\label{rpkpjpproperties}
		It follows from the continuity of $\delta_{\phi,\fp}$ that $R_{\fp}^{\fa,\fd}$ is open and closed in $R_{\fp}^{\fa}$, and thus compact by Lemma~\ref{primitive}.
	\end{remark}
	
	As a consequence, we get the following lemma.
	
	\begin{lemma}
		\label{congruenceconditions}
		For every prime $\fp$ there exists an open subset of the form $\fm_{\fp}^{\fa,\fd}=\fm_{\fp,1}^{\fa,\fd}\times\cdots\times \fm_{\fp,m}^{\fa,\fd}\subseteq (\fa_{\fp})^{w}$, such that $R_{\fp}^{\fa,\fd}$ is determined by congruence conditions modulo $\fm_{\fp}^{\fa}:=\bigcap_{\fd}\fm_{\fp}^{\fa,\fd}$ in $R_{\fp}^{\fa}$. Moreover, for every $\fp$ with $\ord_{\fp}\fd=0$ for all $\fd\in\cD_{\phi}$, we can choose $\fm_{\fp}^{\fa}=(\fa_{\fp})^{w}$.
	\end{lemma}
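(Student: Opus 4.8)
The plan is to reduce the statement to a purely local, finite-level statement about the continuous $\Z$-valued function $\delta_{\phi,\fp}$ on the compact space $R_\fp^{\fa}$. First I would recall from Remark~\ref{rpkpjpproperties} that each $R_\fp^{\fa,\fd} = \delta_{\phi,\fp}([\cdot])^{-1}(j_\fp) \cap R_\fp^{\fa}$ is open and closed in the compact set $R_\fp^{\fa}$, hence compact; moreover, since $\cD_\phi$ is the image of $\delta_\phi$, the sets $R_\fp^{\fa,\fd}$ for $\fd$ ranging over the (finitely many, by Lemma~\ref{finiteness}) discrepancy ideals whose local components at $\fp$ are realized form a finite partition of $R_\fp^{\fa}$ into compact open pieces. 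The function $\delta_{\phi,\fp}$ depends only on $\phi(x)$ through finitely many coordinates of bounded $\fp$-adic valuation, so in fact $\delta_{\phi,\fp}$ is \emph{locally constant} on $R_\fp^{\fa}$: every point has an open neighbourhood on which it is constant.

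Next I would invoke compactness to upgrade "locally constant" to "constant on a neighbourhood basis of open balls of a uniform radius". Concretely, cover $R_\fp^{\fa}$ by finitely many sets of the form $y + (\fp^{n}\cO_{\K,\fp})^{w}$ on each of which $\delta_{\phi,\fp}([\cdot])$ is constant; taking $n$ to be the maximum of the finitely many exponents used, one obtains a single integer $n = n(\fp,\fa)$ such that $R_\fp^{\fa,\fd}$ is a union of cosets of $(\fp^{n}\cO_{\K,\fp})^{w}$ inside $R_\fp^{\fa}$ for every $\fd$. Setting $\fm_{\fp,i}^{\fa,\fd} := \fp^{n}\fa_\fp^{w_i} = (\fp^{n}\fa_\fp)^{w_i}$ (or more precisely, take $\fm_{\fp,i}^{\fa,\fd}$ to be the additive subgroup of $\fa_\fp^{w_i}$ of index a power of $N(\fp)$ that records exactly the congruence class data cut out by $R_\fp^{\fa,\fd}$), we get that $R_\fp^{\fa,\fd}$ is determined by congruence conditions modulo $\fm_\fp^{\fa,\fd} = \fm_{\fp,1}^{\fa,\fd}\times\cdots\times\fm_{\fp,m}^{\fa,\fd}$ inside $R_\fp^{\fa}$, and then $\fm_\fp^{\fa} := \bigcap_\fd \fm_\fp^{\fa,\fd}$ simultaneously witnesses all of them (the intersection is over the finitely many $\fd\in\cD_\phi$, so it is again open of finite index).

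For the last sentence, I would argue that if $\ord_\fp\fd = 0$ for every $\fd\in\cD_\phi$, then in particular $\delta_{\phi,\fp}([x]) = 0$ for all $x\in R_\fp^{\fa}$, so $R_\fp^{\fa,\fd}$ equals all of $R_\fp^{\fa}$ when $j_\fp = 0$ and is empty otherwise; in the nontrivial case no congruence condition beyond membership in $R_\fp^{\fa}$ is imposed, so we may take $\fm_\fp^{\fa,\fd} = (\fa_\fp)^{w}$ and hence $\fm_\fp^{\fa} = (\fa_\fp)^{w}$. The main obstacle is the second step: one must be careful that "locally constant on a compact totally disconnected space" genuinely yields a \emph{single} modulus $\fm_\fp^{\fa,\fd}$ rather than a patchwork of different moduli on different pieces — this is where compactness of $R_\fp^{\fa,\fd}$ (Lemma~\ref{primitive} together with Remark~\ref{rpkpjpproperties}) is essential, allowing extraction of a finite subcover and hence a uniform $n$. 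Everything else is bookkeeping about how valuation conditions translate into congruences modulo powers of $\fp$.
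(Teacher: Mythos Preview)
Your proposal is correct and follows essentially the same route as the paper's proof: use compactness of $R_\fp^{\fa,\fd}$ (Remark~\ref{rpkpjpproperties}) to cover it by finitely many product-boxes, then take the maximal exponent appearing to obtain a single product modulus $\fm_\fp^{\fa,\fd}$. Two small points to tidy: the equality $\fp^{n}\fa_\fp^{w_i}=(\fp^{n}\fa_\fp)^{w_i}$ is false when $w_i>1$ and $n>0$ (your parenthetical correction is the right fix), and the implication ``$\ord_\fp\fd=0$ for all $\fd\in\cD_\phi\Rightarrow\delta_{\phi,\fp}\equiv 0$ on $R_\fp^{\fa}$'' tacitly uses that $\K$-points are dense in $R_\fp^{\fa}$ together with continuity of $\delta_{\phi,\fp}$---though the paper is equally brief at this step.
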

	
	\begin{proof}
		We view $R_{\fp}^{\fa,\fd}$ as a subset of $(\fa_{\fp})^{w}$. The products
		\begin{equation*}
			\cU_{\fp}^{a^{\fp},b^{\fp}}:=\prod_{i=1}^{m}(a_{i}^{\fp}+\fp^{b_{i}^{\fp}}\fa_{\fp}^{w_{i}})
		\end{equation*}
		for varying $a^{\fp}=(a_{1}^{\fp},\dots,a_{m}^{\fp})\in\fa^{w}$ and $b^{\fp}=(b_{1}^{\fp},\dots,b_{m}^{\fp})\in\Z_{\geq 0}^{m}$ define a basis of open subsets for $(\fa_{\fp})^{w}$. For all but finitely many $\fp$, let us choose $\cU_{\fp}^{a^{\fp},b^{\fp}}$ to be the whole $(\fa_{\fp})^{w}$. That way, $\prod_{\fp}\cU_{\fp}^{a^{\fp},b^{\fp}}$ is also a basis for $\prod_{\fp} (\fa_{\fp})^{w}$. Without loss of generality, we may choose $\cU_{\fp}^{a^{\fp},b^{\fp}}=(\fa_{\fp})^{w}$ for every $\fp$ such that $\ord_{\fp}\fd=0$ for all $\fd\in\cD_{\phi}$.
		
		By Remark~\ref{rpkpjpproperties}, we can cover $R_{\fp}^{\fa,\fd}$ by a finite collection of the aforementioned basic open subsets intersected with $R_{\fp}^{\fa,\fd}$. That is, we can write
		\begin{equation*}
			R_{\fp}^{\fa,\fd}=\bigcup_{j\in\cJ}(\cU_{\fp}^{a^{\fp,j},b^{\fp,j}}\cap R_{\fp}^{\fa,\fd})
		\end{equation*} 
		where $\cJ$ is a finite index set.  Then there is a maximal value of $b_{i}^{\fp,j}$ as $j$ varies and $i$ is fixed. We denote this value by $s_{i}:=\max_{j\in\cJ}(b_{i}^{\fp,j})$, and $s:=(s_{1},\dots,s_{m})$. Note that for $\fp$ with $\ord_{\fp}\fd=0$ for all $\fd\in\cD_{\phi}$ we have $b_{1}^{\fp,j}=\cdots=b_{m}^{\fp,j}=0$ and thus $s_{i}=0$ for all $i$.
		
		By abuse of notation, let us denote $\cU_{\fp}^{a^{\fp,j},b^{\fp,j}}:=\cU_{\fp}^{a^{\fp,j},b^{\fp,j}}\cap R_{\fp}^{\fa,\fd}$. Each $a_{i}^{\fp,j}+\fp^{b_{i}^{\fp,j}}\fa_{\fp}^{w_{i}}$ is stable under translation by $\fp^{s_{i}}\fa_{\fp}^{w_{i}}$ and we conclude that $\cU_{\fp}^{a^{\fp,j},b^{\fp,j}}$ is $(\fp^{s_{1}}\fa_{\fp}^{w_{1}}\times\cdots\times \fp^{s_{m}}\fa_{\fp}^{w_{m}})$-periodic. We can therefore consider the following diagram:
		\begin{equation*}
			\xymatrix{\cU_{\fp}^{a^{\fp,j},b^{\fp,j}}\ar@{->>}[r] & \cU_{\fp}^{a^{\fp,j},b^{\fp,j}}/ (\fp^{s_{1}}\fa_{\fp}^{w_{1}}\times\cdots\times \fp^{s_{m}}\fa_{\fp}^{w_{m}}) \ar@{^(->}[d]\\(\fa_{\fp})^{w}\ar@{->>}[r]^(.30){q}&(\fa_{\fp})^{w} / (\fp^{s_{1}}\fa_{\fp}^{w_{1}}\times\cdots\times \fp^{s_{m}}\fa_{\fp}^{w_{m}})}
		\end{equation*}
		
		We can easily deduce from the diagram that $\cU_{\fp}^{a^{\fp,j},b^{\fp,j}}$ is the inverse image of  $\cU_{\fp}^{a^{\fp,j},b^{\fp,j}}/ (\fp^{s_{1}}\fa_{\fp}^{w_{1}}\times\cdots\times \fp^{s_{m}}\fa_{\fp}^{w_{m}})$ under the map $q$, implying that $\cU_{\fp}^{a^{\fp,j},b^{\fp,j}}$ is determined by congruence conditions modulo $\fp^{s_{1}}\fa_{\fp}^{w_{1}}\times\cdots\times \fp^{s_{m}}\fa_{\fp}^{w_{m}}$. The result follows.
	\end{proof}
	
	\begin{definition}
		We call $\fm_{\fp}^{\fa}$ the \textit{local modulus at $\fp$}.
	\end{definition}
	
	\begin{remark}
		Let $\fp$ be such that $k_{\fp}=0$ and $\ord_{\fp}\fd=0$ for all $\fd\in\cD_{\phi}$. Then $\fm_{\fp}^{\fa}=\cO_{\K,\fp}^{m}$, since for all the primes $\fp$ that do not divide $\fa$ the ideal $\fa_{\fp}=(\fp\cO_{\K,\fp})^{k_{\fp}}$ is trivial.
	\end{remark}
	
	\begin{remark}
		\label{maindiagram}
		The first statement of Lemma~\ref{congruenceconditions} can be translated as follows: there exists a subset $S_{\fp}^{\fa,\fd}$ in $(\fa_{\fp}^{w_{1}}/\fm_{\fp,1}^{\fa})\times\cdots\times(\fa_{\fp}^{w_{m}}/\fm_{\fp,m}^{\fa})$ such that $R_{\fp}^{\fa,\fd}$ is the inverse image of $S_{\fp}^{\fa,\fd}$ under the reduction map shown in the diagram below. To see this, note that taking the inverse image under the reduction map is essentially the same as taking the points of $R_{\fp}^{\fa,\fd}$ satisfying some congruence conditions modulo $\fm_{\fp}^{\fa}$.
		\begin{equation*}
			\xymatrix{R_{\fp}^{\fa}\ar@{^(->}[r]  & (\fa_{\fp})^{w}\ar@{->>}[r]  & (\fa_{\fp}^{w_{1}}/\fm_{\fp,1}^{\fa})\times\cdots\times(\fa_{\fp}^{w_{m}}/\fm_{\fp,m}^{\fa})\\
				R_{\fp}^{\fa,\fd}\ar@{^(->}[u]\ar[rr]  &  &  S_{\fp}^{\fa,\fd}\ar@{^(->}[u]}
		\end{equation*}
	\end{remark}
	
	All sets in the above diagram, except possibly $S_{\fp}^{\fa,\fd}$, are stable under the (weight~$w$) action of $\cO_{\K,\fp}^{\times}$, since fractional ideals are stable under multiplication by units. Without loss of generality we can choose $S_{\fp}^{\fa,\fd}$ to be $\cO_{\K,\fp}^{\times}$-stable as well, because we can replace $S_{\fp}^{\fa,\fd}$ by $\bigcup_{\kappa\in\cO_{\K,\fp}^{\times}}\kappa S_{\fp}^{\fa,\fd}$, making it $\cO_{\K,\fp}^{\times}$-stable while maintaining $R_{\fp}^{\fa,\fd}$ as its inverse image. For the remainder of the paper we fix such a choice of $\cO_{\K,\fp}^{\times}$-stable sets $S_{\fp}^{\fa,\fd}$.
	
	\begin{definition}
		\label{mad}
		We define the \textit{global modulus} to be
		\begin{equation*}
			\begin{split}
				\fm^{\fa}&:=\bigcap_{\fp}(\fm_{\fp}^{\fa}\cap \K^{m})=\fm_{1}^{\fa}\times\cdots\times\fm_{m}^{\fa},
			\end{split}
		\end{equation*}
		where $\fm_{i}^{\fa}:=\bigcap_{\fp}(\fm_{\fp,i}^{\fa}\cap \K)$ and the intersection inside the brackets takes place in $\K_{\fp}$. Equivalently, $\fm^{\fa}=\{x\in \K^{m}\, |\, \text{ for all } \fp: x\in \fm_{\fp}^{\fa}\}$. Note that $\fm^{\fa}$ is a lattice in $\K^{m}$.
	\end{definition}
	
	\begin{remark}
		\label{mpadokp}
		It follows from the second result of Lemma~\ref{congruenceconditions} that this set is well defined. Else, we would have non-trivial conditions at every $\fp$.  
	\end{remark}
	
	\begin{notation}
		\label{vpkpjp}
		We denote by $V_{\fp}^{\fa,\fd}$ the inverse image of $S_{\fp}^{\fa,\fd}$ in $(\fa_{\fp})^{w}$.
	\end{notation}
	
	\begin{remark}
		\label{vpkpjprpkp}
		Note that $R_{\fp}^{\fa,\fd} =V_{\fp}^{\fa,\fd}\cap R_{\fp}^{\fa}$.
	\end{remark}
	
	\begin{remark}
		\label{rpkpjprpkp}
		Let $\fp$ such that $\fm_{\fp}^{\fa}=(\fa_{\fp})^{w}$ is trivial. Then, $S_{\fp}^{\fa,\fd}$ is also trivial. As a consequence, we get that $V_{\fp}^{\fa,\fd}=(\fa_{\fp})^{w}$ and $R_{\fp}^{\fa,\fd}=R_{\fp}^{\fa}$.
	\end{remark}
	
	We will next introduce one of our most important concepts, which we will use in Section~\ref{sec3} for counting points within a suitable weighted expanding compact set, a key step towards our main result.
	
	\begin{definition}
		\label{vad}
		We define the \textit{global set} $V^{\fa,\fd}$ to be  
		\begin{equation*}
			V^{\fa,\fd}:=\bigcap_{\fp}\pi_{\fp}^{-1}(V_{\fp}^{\fa,\fd}).
		\end{equation*}
		In other words, $V^{\fa,\fd}$ is the set of points in $\fa^{w}$ that, localized, satisfy all the congruence conditions modulo $\fm_{\fp}^{\fa}$ at every $\fp$.
	\end{definition}
	
	\begin{example}
		\label{phiidentity}
		Let $\phi$ be the identity map. Then $e=1$ and there is only one choice for $\fd\in\cD_{\phi}$, namely $\fd=(1)$, and thus by Lemma~\ref{congruenceconditions} we can take $\fm_{\fp}^{\fa}$ to be $(\fa_{\fp})^{w}$ for all $\fp$, which by Remark~\ref{rpkpjprpkp} implies that for all $\fp$ we have $V_{\fp}^{\fa,\fd}=(\fa_{\fp})^{w}$ and therefore $V^{\fa,\fd}=\fa^{w}$.
	\end{example}
	
	\begin{lemma}
		\label{vadproperties}
		$V^{\fa,\fd}$ satisfies the following properties:
		\begin{enumerate}
			\item $V^{\fa,\fd}$ is $\fm^{\fa}$-periodic. As a result, $V^{\fa,\fd}$ is a union of cosets of $\fm^{\fa}$ and we can decompose $V^{\fa,\fd}$ as $V^{\fa,\fd}=\bigsqcup_{x\in X}(x+\fm^{\fa})$ for some finite subset $X\subset V^{\fa,\fd}$.
			\item $V^{\fa,\fd}$ is stable under the (weight $w$) action of $\cO_{\K}^{\times}$ on $\K^{m}$.
		\end{enumerate}
	\end{lemma}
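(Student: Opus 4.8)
The plan is to prove both statements by unwinding the definitions of $V^{\fa,\fd}$, $\fm^{\fa}$ and the local data $V_{\fp}^{\fa,\fd}$. For part (1), the key observation is that $V^{\fa,\fd}=\bigcap_{\fp}\pi_{\fp}^{-1}(V_{\fp}^{\fa,\fd})$ is, by construction, a set defined by congruence conditions at each $\fp$: membership of $x\in\fa^{w}$ in $V^{\fa,\fd}$ depends only on the class of $x$ in $\prod_{\fp}(\fa_{\fp}^{w_1}/\fm_{\fp,1}^{\fa})\times\cdots\times(\fa_{\fp}^{w_m}/\fm_{\fp,m}^{\fa})$, via the sets $S_{\fp}^{\fa,\fd}$ (Remark~\ref{maindiagram}). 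Since $\fm^{\fa}=\bigcap_{\fp}(\fm_{\fp}^{\fa}\cap\K^m)$ maps into $\prod_{\fp}\fm_{\fp}^{\fa}$, adding an element of $\fm^{\fa}$ to $x$ does not change any of these local classes, so $x+\fm^{\fa}\subseteq V^{\fa,\fd}$ whenever $x\in V^{\fa,\fd}$. This gives $\fm^{\fa}$-periodicity. For the decomposition as a finite union of cosets, one notes that $V^{\fa,\fd}$ is contained in $\fa^w$, and $\fa^w/\fm^{\fa}$ is finite because $\fm^{\fa}$ is a finite-index sublattice of $\fa^w$ (both are lattices of full rank $mN$ in $\K^m\otimes_\Q\R$, as $\fm^{\fa}$ agrees with $\fa^w$ at all but finitely many $\fp$ by the second part of Lemma~\ref{congruenceconditions}, and has finite index at the remaining ones); so $V^{\fa,\fd}$, being a union of $\fm^{\fa}$-cosets inside $\fa^w$, is a union of finitely many of them, and we pick one representative $x$ from each to form the finite set $X$.

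For part (2), I would argue locally: each $S_{\fp}^{\fa,\fd}$ was chosen to be $\cO_{\K,\fp}^{\times}$-stable (this choice was explicitly fixed in the paragraph following Remark~\ref{maindiagram}), hence its inverse image $V_{\fp}^{\fa,\fd}$ in $(\fa_{\fp})^w$ is $\cO_{\K,\fp}^{\times}$-stable for the weight-$w$ action, since the reduction map $(\fa_{\fp})^w\to\prod_i(\fa_{\fp}^{w_i}/\fm_{\fp,i}^{\fa})$ is equivariant (the modulus $\fm_{\fp}^{\fa}$ being a product of fractional ideals is itself $\cO_{\K,\fp}^{\times}$-stable). Now take $\alpha\in\cO_{\K}^{\times}$; then $\alpha\in\cO_{\K,\fp}^{\times}$ for every $\fp$, so for $x\in V^{\fa,\fd}$ and each $\fp$ we have $\pi_{\fp}(\alpha_* x)=\alpha_*\pi_{\fp}(x)\in\alpha_* V_{\fp}^{\fa,\fd}\subseteq V_{\fp}^{\fa,\fd}$, whence $\alpha_* x\in\bigcap_{\fp}\pi_{\fp}^{-1}(V_{\fp}^{\fa,\fd})=V^{\fa,\fd}$.

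I expect the only mild subtlety to be the finiteness assertion in part (1): one must be sure that $\fm^{\fa}$ really is a full-rank lattice in $\K^m$, equivalently that $\fm_i^{\fa}=\bigcap_\fp(\fm_{\fp,i}^{\fa}\cap\K)$ is a nonzero fractional ideal of $\cO_{\K}$ of finite index in $\fa^{w_i}$. This follows from Lemma~\ref{congruenceconditions}: at all but finitely many $\fp$ we have $\fm_{\fp,i}^{\fa}=\fa_{\fp}^{w_i}$, and at each of the finitely many exceptional $\fp$ the open subgroup $\fm_{\fp,i}^{\fa}\subseteq\fa_{\fp}^{w_i}$ has finite index; Definition~\ref{mad} already records that $\fm^{\fa}$ is a lattice, so I would simply invoke that and conclude $[\fa^w:\fm^{\fa}]<\infty$. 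Everything else is a direct diagram chase through the definitions in this subsection.
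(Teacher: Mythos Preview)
Your proposal is correct and follows essentially the same approach as the paper: both parts are deduced from the corresponding local properties of $V_{\fp}^{\fa,\fd}$ (namely, $\fm_{\fp}^{\fa}$-periodicity and $\cO_{\K,\fp}^{\times}$-stability via the chosen $S_{\fp}^{\fa,\fd}$), and then globalized using $V^{\fa,\fd}=\bigcap_{\fp}\pi_{\fp}^{-1}(V_{\fp}^{\fa,\fd})$ together with $\cO_{\K}^{\times}\subseteq\cO_{\K,\fp}^{\times}$. Your write-up is in fact more explicit than the paper's about the finiteness of the coset decomposition in part~(1), which the paper leaves implicit.
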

	
	\begin{proof}
		Both results follow from analogous properties of $V_{\fp}^{\fa,\fd}$: 
		\begin{enumerate}
			\item Let $x\in V_{\fp}^{\fa,\fd}$ and $y\in \fm_{\fp}^{\fa}$. Then it is clear by definition that $x+y\in V_{\fp}^{\fa,\fd}$ and so $V_{\fp}^{\fa,\fd}$ is $\fm_{\fp}^{\fa}$-periodic for each $\fp$. Thus the result follows.
			\item By definition of $V_{\fp}^{\fa,\fd}$ and the fact that $S_{\fp}^{\fa,\fd}$ is $\cO_{\K,\fp}^{\times}$-stable together with the compatibility of the reduction map, we ensure that $V_{\fp}^{\fa,\fd}$ is $\cO_{\K,\fp}^{\times}$-stable as well. Again, the construction of $V^{\fa,\fd}$ makes the result straightforward, as $\cO_{\K}^{\times}\subseteq\cO_{\K,\fp}^{\times}$ for any $\fp$.
		\end{enumerate}
	\end{proof}
	
	\begin{remark}
		\label{remark}
		Note that $Q_{w}^{\fa,\fd}=\{x\in \cO_{\K}^{\times}\backslash (V^{\fa,\fd}-\{0\})\mid \fI_{w}(x)=\fa\}$. This follows from the expression $Q_{w}^{\fa,\fd}=\bigcap_{\fp}\pi_{\fp}^{-1}(\cO_{\K,\fp}^{\times}\backslash R_{\fp}^{\fa,\fd})$ and the fact that $\{x\in V^{\fa,\fd}-\{0\}\mid \fI_{w}(x)=\fa\}=\bigcap_{\fp}\pi_{\fp}^{-1}(V_{\fp}^{\fa,\fd}\cap R_{\fp}^{\fa})=\bigcap_{\fp}\pi_{\fp}^{-1}(R_{\fp}^{\fa,\fd})$.
	\end{remark}
	
	\section{Counting $\K$-points of $\P(w)$}
        \label{sec3}
	
	In Theorem~\ref{maintheorem}, we will find an asymptotic formula for the number of $\K$-rational points of the weighted projective space $\P(w)$ whose image under~$\phi$ has bounded height, as in Definition~\ref{size}. Due to the `stacky' nature of $\P(w)$, it is convenient to count each $x\in\P(w)(\K)$ with weight $1/\#{\Aut x}$, where
	\[
	\Aut x = \{\lambda\in\K^\times\mid \lambda_* x = x \}.
	\]
	More generally, let $X$ be a finite set and let $\Aut x$ be an attached finite group for each $x\in X$. Then we write
	\[
	\widehat{\#}X = \sum_{x\in X}\frac{1}{\#{\Aut x}}.
	\]
	(This is sometimes called the \emph{mass} or \emph{groupoid cardinality} of~$X$.)
	Our final aim will therefore be to determine the asymptotic behaviour, as $T\to\infty$, of the quantity
	\begin{equation*}
		N_{\phi}(T):=\widehat{\#}\{x\in\P(w)(\K) \mid S_{u}(\phi(x))\leq T\}.
	\end{equation*} 
	We do so by following a method analogous to that of \cite{De}. 

	\subsection{Introduction to the counting problem}
	
	We will first introduce some values similar to $N_{\phi}(T)$, through which we intend to ease the computation of $N_{\phi}(T)$. We first denote
	\begin{equation*}
		N_{\phi}(c,T):=\widehat{\#}\{x\in\P(w)(\K) \mid  [\fI_{w}(x)]=c, \, S_{u}(\phi(x))\leq T\}.
	\end{equation*}
    We achieve our counting goal by summing $N_{\phi}(c,T)$ over all the ideal classes $c$ of $\K$.
	
	\begin{remark}
		\label{nctequal}
		Note that 
		\begin{equation*}
			\begin{split}
				N_{\phi}(c,T)&=\widehat{\#}\{x\in Q_{w}^{\fa} \mid  S_{u}(\phi(x))=\frac{S_{u,\infty}(\phi(x))}{N(\fI_{u}(\phi(x)))}\leq T\}\\
				&=\sum_{\fd\in\cD_{\phi}}\widehat{\#}\{x\in Q_{w}^{\fa}\mid  \fI_{u}(\phi (x))=\fd\fa^{e}, S_{u,\infty}(\phi(x))\leq N(\fd\fa^{e})T\}\\
				&=\sum_{\fd\in\cD_{\phi}}\widehat{\#}\{x\in Q_{w}^{\fa,\fd}\mid S_{u,\infty}(\phi(x))\leq N(\fd\fa^{e})T\}.
			\end{split}
		\end{equation*}
	\end{remark}
	
	Let $\fb\subseteq\fa$ be an ideal. We define the following two quantities. 
	\begin{align*}
		\bar{\cN}_{\phi}(\fb,\fa,\fd,T)&:=\widehat{\#}\{x\in \cO_{\K}^{\times}\backslash (V^{\fa,\fd}-\{0\}) \mid  \fI_{w}(x)=\fb,S_{u,\infty}(\phi(x))\leq T\},\\
		\cN_{\phi}(\fb,\fa,\fd,T)&:=\widehat{\#}\{x\in \cO_{\K}^{\times}\backslash (V^{\fa,\fd}-\{0\}) \mid  \fI_{w}(x)\subseteq\fb,\, S_{u,\infty}(\phi(x))\leq T\}.
	\end{align*}
	By Lemma~\ref{vadproperties} (2), these are well defined, and it is clear that the following equality holds:
	\begin{equation*}
		\cN_{\phi}(\fb,\fa,\fd,T)=\sum_{\fe\subseteq \cO_{\K}}\bar{\cN}_{\phi}(\fb\fe,\fa,\fd,T).
	\end{equation*}
    We can now apply Möbius inversion, from which we deduce that 
	\begin{equation*}
		\bar{\cN}_{\phi}(\fb,\fa,\fd,T)=\sum_{\fc\subseteq \cO_{\K}}\mu_{\K}(\fc)\cN_{\phi}(\fb\fc,\fa,\fd,T).
	\end{equation*}
	In this way, we can reduce the counting of points with fixed scaling ideal (which is what we ultimately need to compute) to simply counting points with scaling ideal contained in a fixed ideal.
	
    On top of that, it is not hard to see, following Remark~\ref{nctequal} and Remark~\ref{remark}, that one has
	\begin{equation*}
		N_{\phi}(c,T)=\sum_{\fd\in\cD_{\phi}}\bar{\cN}_{\phi}(\fa,\fa,\fd,TN(\fd)N(\fa)^{e}).
	\end{equation*}
	Hence, we gather that 
	\begin{equation}
		\label{equation}
		\begin{split}
			N_{\phi}(T)&=\sum_{c\in \Cl(\K)}N_{\phi}(c,T)\\&=\sum_{\fa\in R_{\K}}\sum_{\fd\in\cD_{\phi}}\bar{\cN}_{\phi}(\fa,\fa,\fd,TN(\fd)N(\fa)^{e})\\&=\sum_{\fa\in R_{\K}}\sum_{\fd\in\cD_{\phi}}\sum_{\fc\subseteq \cO_{\K}}\mu_{\K}(\fc)\cN_{\phi}(\fa\fc,\fa,\fd,TN(\fd)N(\fa)^{e}).
		\end{split}
	\end{equation}
	This reduces the original problem to determining the asymptotic behaviour of 
	\begin{equation*}
		\cN_{\phi}(\fa\fc,\fa,\fd,T) \mbox{ as } T\to\infty
	\end{equation*}
	for
	\begin{enumerate}
		\item finitely many fixed representatives $\fa$ of $\Cl(\K)$,
		\item finitely many $\fd\in \cD_{\phi}$,
		\item all integral ideals $\fc\subseteq \cO_{\K}$.
	\end{enumerate}
	
	\subsection{Reduction to a lattice point problem}
        \label{lattice-point-problem}
	
	Mirroring the method used in \cite[Section 4]{De}, we may simplify the problem even further by making it into a lattice point counting issue. 
	
	By Dirichlet's unit theorem, the image of the group homomorphism 
		\begin{align*}
			\ell: \cO_{\K}^{\times} &\longrightarrow \R^{\Omega_{\K}}\\
			u &\longmapsto (\log|u|_{v})_{v\in \Omega_{\K}}
		\end{align*}
	is a lattice $\Gamma$ of rank $r:=r_{1}+r_{2}-1$ in the hyperplane $\cH$ defined by $\sum _{v\in \Omega_{\K}}y_{v}=0$. Moreover, $\ker(\ell)=\mu(\K)$.
    As in \cite[Section 4]{De}, we define maps
    \begin{align*}
    \pr:\R^{\Omega_{\K}}&\longrightarrow \cH\\
    y&\longmapsto \left(y_{v}-\left(\frac{\sum_{v'\in \Omega_{\K}}y_{v'}}{N}\right)N_{v}\right)_{v\in\Omega_{\K}}
    \end{align*}
    and
    \begin{align*}
    \eta: \prod_{v\in \Omega_{\K}} (\K_{v}^{m}-\{0\}) &\longrightarrow \R^{\Omega_{\K}}\\
		(z_v)_{v\in\Omega_{\K}} &\longmapsto \left(\log\max_{1\le i\le m}|z_{v,i}|_{v}^{\frac{1}{w_{i}}}\right)_{v\in\Omega_{\K}}.
	\end{align*}
	Let $F$ be the standard fundamental domain for $\cH/\Gamma$ determined by a $\Z$-basis of~$\Gamma$. Then we define
	\[
	\Delta := ({\pr}\circ \eta)^{-1}F \subseteq \prod_{v\in\Omega_{\K}}(\K_v^m-\{0\}).
	\]
	
	\begin{lemma}[{\cite[Proposition 4.1, Lemma 4.3]{De}}]
	\label{del}
	The set $\Delta$ has the following properties:
		\begin{enumerate}
		    \item $\Delta$ is $\mu(\K)$-stable and there is a bijection
	    \[
	    \mu(\K)\backslash\Delta \isom \cO_\K^\times\backslash \prod_{v\in\Omega_{\K}}(\K_v^m-\{0\}).
	    \]
			\item $t_{*}\Delta=\Delta$ for all $t\in\R^{\times}$.
		\end{enumerate}
	\end{lemma}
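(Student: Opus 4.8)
The plan is to reduce both statements to standard facts about Dirichlet's unit theorem and the projection $\pr$, exactly as in \cite[Section 4]{De}, with only minor bookkeeping caused by the weighted action. For part (1), I would first observe that $\eta$ is equivariant for the actions of $\cO_\K^\times$: if $u\in\cO_\K^\times$ acts on $(z_v)_v$ by the weight-$w$ action, then $\max_i|u^{w_i}z_{v,i}|_v^{1/w_i} = |u|_v\max_i|z_{v,i}|_v^{1/w_i}$ (here one uses that $|u|_v>0$ and the normalization of $|\cdot|_v$), so $\eta(u_*z) = \eta(z) + \ell(u)$ as elements of $\R^{\Omega_\K}$. Moreover, since $\sum_{v}N_v\log|u|_v = \log\prod_v|u|_v = 0$ by the product formula for units, $\ell(u)$ already lies in $\cH$, so $\pr(\ell(u)) = \ell(u)$ and hence $(\pr\circ\eta)(u_*z) = (\pr\circ\eta)(z) + \ell(u)$. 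The key point is that $\pr$ is a retraction of $\R^{\Omega_\K}$ onto $\cH$ that is equivariant for translation by $\cH$ (it kills the line spanned by $(N_v)_v$ and restricts to the identity on $\cH$); this makes $\pr\circ\eta$ intertwine the $\cO_\K^\times$-action on the source with the $\Gamma$-translation action on $\cH$ via $\ell$.

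Given this equivariance, $\mu(\K)$-stability of $\Delta = (\pr\circ\eta)^{-1}F$ is immediate: for $\zeta\in\mu(\K)=\ker(\ell)$ we get $(\pr\circ\eta)(\zeta_*z) = (\pr\circ\eta)(z)$, so $\zeta_*$ preserves $(\pr\circ\eta)^{-1}F$. For the bijection, recall that $F$ is a fundamental domain for $\cH/\Gamma$, i.e.\ the quotient map $F\to\cH/\Gamma$ is a bijection. Chasing the diagram, a point of $\cO_\K^\times\backslash\prod_v(\K_v^m-\{0\})$ is the same as a $\Gamma$-orbit's worth of fibres of $\pr\circ\eta$; restricting to the unique representative in $F$ picks out exactly $\Delta$, and the residual ambiguity is precisely $\ker(\ell)=\mu(\K)$. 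One should check surjectivity (every $\cO_\K^\times$-orbit meets $\Delta$, because $F$ meets every $\Gamma$-coset in $\cH$) and that two points of $\Delta$ in the same $\cO_\K^\times$-orbit differ by an element of $\mu(\K)$ (because their images under $\pr\circ\eta$ lie in $F$ and in the same $\Gamma$-coset, hence are equal, so the connecting unit lies in $\ker\ell$). This is where a little care is needed, but it is formal once the equivariance above is in hand.

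For part (2), the scaling action of $t\in\R^\times$ on $\prod_v(\K_v^m-\{0\})$ is $t_*(z_v)_v = (t^{w_1}z_{v,1},\ldots,t^{w_m}z_{v,m})_v$, and one computes $\eta(t_*z)_v = \log\max_i|t^{w_i}z_{v,i}|_v^{1/w_i} = \log\bigl(|t|_v\max_i|z_{v,i}|_v^{1/w_i}\bigr) = \eta(z)_v + N_v\log|t|$, so $\eta(t_*z) = \eta(z) + (\log|t|)\cdot(N_v)_v$. Since $\pr$ annihilates the vector $(N_v)_v$, we get $(\pr\circ\eta)(t_*z) = (\pr\circ\eta)(z)$, and therefore $t_*\Delta = \Delta$. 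All of this is genuinely routine; the only mild obstacle is the diagram chase in part (1) establishing the bijection on the nose (injectivity and surjectivity simultaneously), and since the statement is cited verbatim from \cite[Proposition 4.1, Lemma 4.3]{De} with the obvious modification that $\max_i|\cdot|_v^{1/w_i}$ replaces $\max_i|\cdot|_v$, I would simply write: ``The proof is identical to that of \cite[Proposition~4.1 and Lemma~4.3]{De}, using the equivariance of $\eta$ under the weight-$w$ action of $\cO_\K^\times$ and of $\R^\times$ recorded above.''
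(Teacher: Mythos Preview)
Your proposal is correct and follows essentially the same approach as the paper's proof. The paper's argument is a two-sentence sketch (stating that $(\cO_\K^\times)_*\Delta$ is the whole space and that $a_*x\in\Delta$ forces $a\in\mu(\K)$, then noting part~(2) follows from the definition), while you have simply unpacked the equivariance computations $(\pr\circ\eta)(u_*z)=(\pr\circ\eta)(z)+\ell(u)$ and $(\pr\circ\eta)(t_*z)=(\pr\circ\eta)(z)$ that underlie those assertions; the content is the same.
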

	
	\begin{proof}
	    For the first claim, note that $(\cO_\K^\times)_{*}\Delta = \prod_{v\in\Omega_{\K}}(\K_v^m-\{0\})$ and for all $x\in\Delta$ and $a\in\cO_\K^\times$ we have $a_{*}x\in\Delta$ if and only if $a\in\mu(\K)$. The second claim easily follows from the definition of $\Delta$.
	\end{proof}
	
    \begin{corollary}
	\label{orbits}
	    Let $Z$ be an $\cO_\K^\times$-stable subset of $\prod_{v\in\Omega_{\K}}(\K_v^m-\{0\})$ consisting of finitely many orbits. To each orbit $(\cO_\K^\times)_{*}x\in \cO_\K^\times\backslash Z$ we associate the automorphism group $\Aut x=\{a\in\cO_\K^\times\mid a_{*}x=x\}$. Then we have
	    \[
	    \widehat\#(\cO_\K^\times\backslash Z) = \#\mu(\K)^{-1}\#(Z\cap\Delta).
	    \]
	\end{corollary}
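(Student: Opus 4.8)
The plan is to decompose $Z$ into its finitely many $\cO_\K^\times$-orbits and compute the contribution of a single orbit $\cO = (\cO_\K^\times)_* x$ to each side, then sum. Fix such an orbit and let $\Aut x = \{a \in \cO_\K^\times \mid a_* x = x\}$ be the stabilizer of a chosen representative~$x$; this is a finite group (it injects into $\mu(\K)$, since any unit fixing a nonzero point has all archimedean absolute values equal to~$1$). The contribution of $\cO$ to $\widehat\#(\cO_\K^\times\backslash Z)$ is by definition $1/\#\Aut x$, so it suffices to show that $\#(\cO \cap \Delta) = \#\mu(\K)/\#\Aut x$, and then sum over the finitely many orbits and divide by $\#\mu(\K)$.

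To count $\cO \cap \Delta$, I would use Lemma~\ref{del}(1): since $\Delta$ is $\mu(\K)$-stable and $\mu(\K)\backslash\Delta \isom \cO_\K^\times\backslash\prod_{v\in\Omega_\K}(\K_v^m-\{0\})$, the image of $\cO\cap\Delta$ in $\mu(\K)\backslash\Delta$ is exactly the single point corresponding to the orbit~$\cO$. Hence $\cO\cap\Delta$ is a single $\mu(\K)$-orbit inside $\prod_{v}(\K_v^m-\{0\})$, namely $\mu(\K)_* y$ for any $y \in \cO\cap\Delta$ (such $y$ exists because $(\cO_\K^\times)_*\Delta$ is everything). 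The size of this $\mu(\K)$-orbit is $\#\mu(\K)/\#\mathrm{Stab}_{\mu(\K)}(y)$. Now $\mathrm{Stab}_{\mu(\K)}(y) = \{a \in \mu(\K) \mid a_* y = y\}$, and since $y$ and $x$ lie in the same $\cO_\K^\times$-orbit, their stabilizers in $\cO_\K^\times$ are conjugate — in fact equal, as $\cO_\K^\times$ is abelian — so $\mathrm{Stab}_{\cO_\K^\times}(y) = \Aut x$; intersecting with $\mu(\K)$ and using that $\Aut x \subseteq \mu(\K)$ already, we get $\mathrm{Stab}_{\mu(\K)}(y) = \Aut x$. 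Therefore $\#(\cO\cap\Delta) = \#\mu(\K)/\#\Aut x$, as needed.

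Summing over the finitely many orbits $\cO \subseteq \cO_\K^\times\backslash Z$ gives $\#(Z\cap\Delta) = \sum_{\cO}\#(\cO\cap\Delta) = \#\mu(\K)\sum_{\cO} 1/\#\Aut x = \#\mu(\K)\cdot\widehat\#(\cO_\K^\times\backslash Z)$, which rearranges to the claimed identity.

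The only genuinely delicate point is the identification $\mathrm{Stab}_{\cO_\K^\times}(y) = \Aut x$ and the observation that this stabilizer automatically lies in $\mu(\K)$; once that is in place, everything reduces to the orbit–stabilizer theorem applied to the $\mu(\K)$-action together with the bijection of Lemma~\ref{del}(1). Part (2) of Lemma~\ref{del} is not needed here — it enters later, in handling the expanding-region count — so the present statement follows purely from part (1).
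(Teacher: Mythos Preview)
Your proof is correct and follows essentially the same route as the paper: both arguments restrict the bijection of Lemma~\ref{del}(1) to~$Z$, use that all stabilizers in $\cO_\K^\times$ lie in $\mu(\K)$, and then invoke the orbit--stabilizer theorem. The paper packages this more concisely by noting that the restricted bijection $\mu(\K)\backslash(Z\cap\Delta)\isom\cO_\K^\times\backslash Z$ preserves automorphism groups, hence $\widehat\#(\cO_\K^\times\backslash Z)=\widehat\#(\mu(\K)\backslash(Z\cap\Delta))=\#\mu(\K)^{-1}\#(Z\cap\Delta)$, while you unpack the same computation orbit by orbit.
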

	
	\begin{proof}
	    The bijection in Lemma~\ref{del} (1) restricts to a bijection
	    \[
	    \mu(\K)\backslash(Z\cap \Delta) \isom \cO_\K^\times\backslash Z.
	    \]
	    This respects the automorphism groups as these are all contained in $\mu(\K)$.
	    This implies
	    \[
	    \widehat\#(\cO_\K^\times\backslash Z) = \widehat\#(\mu(\K)\backslash(Z\cap \Delta)).
	    \]
	    By the orbit-stabiliser theorem, the right-hand side equals $\#\mu(\K)^{-1}\#(Z\cap\Delta)$.
	\end{proof}
	
	We now introduce analogues of the sets $\fD(T)$ and $\fB(T)$ in \cite[Section~4]{De}.
	
	\begin{notation}
		\label{dt}
		For all $T>0$, we write
		\begin{equation*}
			\fD_{\phi}(T):=\biggl\{z\in\prod_{v\in \Omega_{\K}}(\K_{v}^{m}-\{0\})\biggm|
			\prod_{v\in \Omega_{\K}}\max_{1\le i\le m}|\phi(z)_{i}|_{v}^{1/u_{i}}
			\leq T\biggr\}
		\end{equation*}
		and $\fB_{\phi}(T):=\fD_{\phi}(T)\cap\Delta$.
	\end{notation}

	\begin{remark}
		The set $\fD_{\phi}(T)$ is $\cO_{\K}^{\times}$-stable, because for all $\lambda\in\cO_{\K}^{\times}$ and $v\in\Omega_{\K}$ we have
		\[
			\max_{i}|\phi(\lambda z)_{i}|_{v}^{1/u_{i}}=\max_{i}|\lambda^{eu_{i}}\phi(z)_{i}|_{v}^{1/u_{i}}=|\lambda^{e}|_{v}\max_{i}|\phi(z)_{i}|_{v}^{1/u_{i}}
		\]
		and $\prod_{v\in\Omega_{\K}}|\lambda^e|_v=1$.
	\end{remark}
	
	\begin{lemma}
		\label{bt}
		We have $\fB_{\phi}(T)=T_{*}^{1/eN}\fB_{\phi}(1)$ with the weight $w$ action.
	\end{lemma}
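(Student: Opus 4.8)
The claim is that $\fB_\phi(T) = T_*^{1/eN}\fB_\phi(1)$, where the scaling $T_*^{1/eN}$ denotes the weight-$w$ action of the positive real number $T^{1/eN}$ on $\prod_{v\in\Omega_\K}(\K_v^m-\{0\})$. Since $\fB_\phi(T) = \fD_\phi(T)\cap\Delta$ and Lemma~\ref{del}~(2) gives $t_*\Delta=\Delta$ for every $t\in\R^\times$ (in particular for $t=T^{1/eN}$), it suffices to prove the corresponding identity for $\fD_\phi$, namely
\[
\fD_\phi(T) = T_*^{1/eN}\fD_\phi(1),
\]
and then intersect both sides with $\Delta$.

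First I would record how the defining quantity of $\fD_\phi$ transforms under the weight-$w$ scaling by a positive real $t$. Writing $\phi = (f_1,\ldots,f_m)$ with $f_i$ homogeneous of degree $eu_i$ with respect to the grading $\deg x_j = w_j$ (Lemma~\ref{morphism}), we have for each $v\in\Omega_\K$ and each $i$ the identity $\phi(t_* z)_i = f_i(t^{w_1}z_1,\ldots,t^{w_m}z_m) = t^{eu_i}f_i(z) = t^{eu_i}\phi(z)_i$. Hence
\[
|\phi(t_* z)_i|_v^{1/u_i} = |t^{eu_i}\phi(z)_i|_v^{1/u_i} = |t|_v^{e}\,|\phi(z)_i|_v^{1/u_i},
\]
so that $\max_i|\phi(t_* z)_i|_v^{1/u_i} = |t|_v^{e}\max_i|\phi(z)_i|_v^{1/u_i}$. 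Taking the product over $v\in\Omega_\K$ and using $\prod_{v\in\Omega_\K}|t|_v = |t|^{\sum_v N_v} = |t|^{N}$ for $t\in\R^\times$, we obtain
\[
\prod_{v\in\Omega_\K}\max_i|\phi(t_* z)_i|_v^{1/u_i} = |t|^{eN}\prod_{v\in\Omega_\K}\max_i|\phi(z)_i|_v^{1/u_i}.
\]

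Now set $t = T^{1/eN}$, so $|t|^{eN}=T$. The displayed identity shows that $z\in\fD_\phi(1)$ if and only if $t_* z\in\fD_\phi(T)$, i.e.\ $T_*^{1/eN}\fD_\phi(1)=\fD_\phi(T)$. Intersecting with the scaling-invariant set $\Delta$ and using $T_*^{1/eN}\Delta=\Delta$ yields
\[
\fB_\phi(T) = \fD_\phi(T)\cap\Delta = \bigl(T_*^{1/eN}\fD_\phi(1)\bigr)\cap\bigl(T_*^{1/eN}\Delta\bigr) = T_*^{1/eN}\bigl(\fD_\phi(1)\cap\Delta\bigr) = T_*^{1/eN}\fB_\phi(1),
\]
as claimed. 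There is no serious obstacle here; the only point requiring a moment's care is the bookkeeping of the exponent $eN$ arising from the composition of the homogeneity degree $e$ of $\phi$ with the global-field normalization $\sum_{v\in\Omega_\K}N_v = N$, which is exactly why the scaling factor is $T^{1/eN}$ rather than $T^{1/N}$.
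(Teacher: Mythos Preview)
Your proof is correct and follows exactly the approach the paper intends: the paper defers to \cite[Lemma~4.3]{De}, and you have spelled out the natural adaptation of that argument, using the homogeneity $\phi(t_*z)_i=t^{eu_i}\phi(z)_i$ together with the $\R^\times$-invariance of $\Delta$ from Lemma~\ref{del}~(2). There is nothing to add.
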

	
    \begin{proof}
	This follows by an argument similar to that in \cite[Lemma~4.3]{De}.
	\end{proof}
	
	Next, we prove an analogue of \cite[Proposition 4.2]{De} in our setting, which will let us switch from counting $\cO_{\K}^{\times}$-orbits to counting lattice points instead.
		
    \begin{proposition}
        \label{4.2}
        We have
        \[
        \cN_{\phi}(\fb,\fa,\fd,T) = \#\mu(\K)^{-1}\#\bigl(V^{\fa,\fd}\cap\fb^w\cap\fB_\phi(T)\bigr).
        \]
    \end{proposition}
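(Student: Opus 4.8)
The plan is to unwind the definition of $\cN_{\phi}(\fb,\fa,\fd,T)$ and identify it with a count of lattice points in $V^{\fa,\fd}\cap\fb^w$ lying in the region $\fB_\phi(T)$, using Corollary~\ref{orbits} to convert from orbit-counting to point-counting. First I would recall that by definition
\[
\cN_{\phi}(\fb,\fa,\fd,T)=\widehat{\#}\bigl\{x\in\cO_{\K}^{\times}\backslash(V^{\fa,\fd}-\{0\})\mid \fI_{w}(x)\subseteq\fb,\ S_{u,\infty}(\phi(x))\leq T\bigr\}.
\]
The condition $\fI_{w}(x)\subseteq\fb$ is equivalent to $x\in\fb^{w}$ by the characterization of the scaling ideal (the same reasoning as in Remark~\ref{remark}, item (3) of the remark after Notation~\ref{qwa}): indeed $\fI_w(x)\subseteq\fb$ means $x_i\in\fb^{w_i}$ for all $i$. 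Hence the set being counted is $\cO_{\K}^{\times}\backslash Z$ where
\[
Z=\bigl\{x\in V^{\fa,\fd}\cap\fb^{w}\mid x\neq0,\ S_{u,\infty}(\phi(x))\leq T\bigr\},
\]
viewed as a subset of $\prod_{v\in\Omega_{\K}}(\K_v^m-\{0\})$ via the diagonal embedding $\K\hookrightarrow\prod_v\K_v$.

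Next I would check the hypotheses of Corollary~\ref{orbits}. The set $Z$ is $\cO_{\K}^{\times}$-stable: $V^{\fa,\fd}$ is $\cO_{\K}^{\times}$-stable by Lemma~\ref{vadproperties}~(2), $\fb^{w}$ is stable under multiplication by units, and the condition $S_{u,\infty}(\phi(x))\leq T$ is preserved because $S_{u,\infty}(\phi(\lambda x))=S_{u,\infty}(\phi(x))$ for $\lambda\in\cO_\K^\times$, using $\phi(\lambda x)=\lambda^e\phi(x)$ together with the product formula $\prod_{v\in\Omega_\K}|\lambda^e|_v=1$ (this is exactly the computation in the remark following Notation~\ref{dt}). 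That $Z$ consists of finitely many $\cO_{\K}^{\times}$-orbits follows from the fact that $\cN_\phi(\fb,\fa,\fd,T)$ is finite, which in turn follows from the (already established, Deng-style) finiteness of the point count for bounded height; alternatively, $V^{\fa,\fd}\cap\fb^w$ is a lattice in $\K^m$ and the archimedean condition cuts out a bounded region modulo the $\cO_\K^\times$-action. Applying Corollary~\ref{orbits} gives
\[
\cN_{\phi}(\fb,\fa,\fd,T)=\widehat{\#}(\cO_{\K}^{\times}\backslash Z)=\#\mu(\K)^{-1}\,\#(Z\cap\Delta).
\]

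It then remains to identify $Z\cap\Delta$ with $V^{\fa,\fd}\cap\fb^{w}\cap\fB_\phi(T)$. Since $\Delta\subseteq\prod_{v}(\K_v^m-\{0\})$ already excludes $0$, and since an element $x\in\K^m\setminus\{0\}$ maps into $\prod_v(\K_v^m-\{0\})$ (every coordinate is nonzero at the archimedean places, or rather $x\ne 0$ forces some coordinate nonzero, and the condition in the product is that the tuple is nonzero at each place, which holds), the set $Z\cap\Delta$ is precisely $\{x\in V^{\fa,\fd}\cap\fb^w : x\in\Delta,\ S_{u,\infty}(\phi(x))\leq T\}$. The archimedean condition $S_{u,\infty}(\phi(x))\leq T$ is, by Definition~\ref{size}, exactly $\prod_{v\in\Omega_\K}\max_i|\phi(x)_i|_v^{1/u_i}\leq T$, which is the defining condition of $\fD_\phi(T)$ in Notation~\ref{dt}; combined with $x\in\Delta$ this says $x\in\fD_\phi(T)\cap\Delta=\fB_\phi(T)$. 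Therefore $Z\cap\Delta=V^{\fa,\fd}\cap\fb^w\cap\fB_\phi(T)$, and the claimed formula follows. The main obstacle is the bookkeeping in the first paragraph — making sure that the passage between the intrinsic description of the counted set (orbits in $V^{\fa,\fd}-\{0\}$ with a scaling-ideal containment condition) and its realization as an $\cO_\K^\times$-stable subset of $\prod_v(\K_v^m-\{0\})$ is clean, in particular that $\fI_w(x)\subseteq\fb\iff x\in\fb^w$ and that finiteness of the orbit set holds so that Corollary~\ref{orbits} applies; once that is set up, the rest is a direct unwinding of Definition~\ref{size} and Notation~\ref{dt}.
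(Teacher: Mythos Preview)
Your proof is correct and follows essentially the same route as the paper: translate $\fI_w(x)\subseteq\fb$ into $x\in\fb^w$, recognize $S_{u,\infty}(\phi(x))\le T$ as the defining condition of $\fD_\phi(T)$, apply Corollary~\ref{orbits} to the resulting $\cO_\K^\times$-stable set, and use $\fB_\phi(T)=\fD_\phi(T)\cap\Delta$. One small terminological slip: $V^{\fa,\fd}\cap\fb^w$ is a finite union of cosets of the lattice $\fm^\fa\cap\fb^w$ rather than itself a lattice, but this does not affect the argument.
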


    \begin{proof}
        For $x\in\cO_\K^\times\backslash(V^{\fa,\fd}-\{0\}),$ the conditions $\fI_w(x)\subseteq\fb$ and $S_{u,\infty}(\phi(x))\le T$ in the definition of $\cN_\phi(\fb,\fa,\fd,T)$ are equivalent to $x\in\fb^w$ and $x\in\fD_\phi(T)$, respectively.
	    Since moreover $0$ is not in $\fD_\phi(T)$, we can write
    	\[
	    \cN_\phi(\fb,\fa,\fd,T) = \widehat{\#}\bigl(\cO_\K^\times\backslash \bigl(V^{\fa,\fd}\cap\fb^w \cap \fD_\phi(T)\bigr)\bigr).
	    \]
		By Corollary~\ref{orbits}, this can be rewritten as
        \[
        \cN_{\phi}(\fb,\fa,\fd,T) = \#\mu(\K)^{-1}\#\bigl(V^{\fa,\fd}\cap\fb^w\cap\fD_\phi(T)\cap\Delta\bigr).
        \]
        The claim now follows from the definition of $\fB_\phi(T)$.
    \end{proof}

	\subsection{The asymptotic formula for $\cN_{\phi}(\fb,\fa,\fd,T)$}
	
	The next step towards our objective is to generalize \cite[Proposition 4.4]{De}, which will allow us to count lattice points.
        We will use some theory on o-minimal structures; we refer to Barroero and Widmer \cite{BW} for the definitions and results that we use.

        \begin{lemma}
	\label{definable}
	    The subsets $\Delta$, $\fD_\phi(T)$ and $\fB_\phi(T)$ of $\prod_{v\in\Omega_{\K}}\K_v^m$ are definable in the o-minimal structure $\R_{\exp}$, and $\fB_\phi(T)$ is bounded.
	\end{lemma}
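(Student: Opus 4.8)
The plan is to exhibit each of the three sets explicitly as a first-order formula over the o-minimal structure $\R_{\exp}$, using coordinates on $\prod_{v\in\Omega_\K}\K_v^m\cong\R^N\otimes\R^m$ (where each complex place contributes two real coordinates per component $x_i$, say via real and imaginary parts). The key observation is that all the ingredients — the archimedean norms $|x_i|_v$, the functions $\eta$, $\pr$, the description of the fundamental domain $F$, and the constraint defining $\fD_\phi(T)$ — are built from polynomials, absolute values, $\max$, and a single application of $\log$ (in $\eta$) or of $\exp$ (to invert it). Since $\R_{\exp}$ defines $\exp$ and hence $\log$ on $\R_{>0}$, and since polynomial (in)equalities, $\max$, $|\cdot|$ and finite Boolean combinations preserve definability, each set will be seen to be definable. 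I would recall from \cite{BW} the standard facts that definable sets are closed under finite unions, intersections, complements, and projections, and that a set defined by a formula in the language of ordered fields augmented by $\exp$ is $\R_{\exp}$-definable.

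First I would treat $\fD_\phi(T)$: the morphism $\phi$ is given by polynomials $f_1,\ldots,f_m$ with algebraic (hence, after fixing an embedding, explicit real/complex) coefficients by Lemma~\ref{morphism}, so $\phi(z)_i$ is a polynomial function of the real coordinates of $z$; the quantities $|\phi(z)_i|_v^{1/u_i}$ are definable (a root of a polynomial equation, or an $\exp$–$\log$ expression), their maximum over $i$ is definable, and the product over the finitely many $v\in\Omega_\K$ together with the inequality $\le T$ is then a definable condition. Next I would handle $\Delta=({\pr}\circ\eta)^{-1}F$: the map $\eta$ is $(z_v)\mapsto(\log\max_i|z_{v,i}|_v^{1/w_i})_v$, which is definable since $\log$ is; $\pr$ is an explicit affine map hence definable; and $F$, being the standard fundamental parallelepiped attached to a fixed $\Z$-basis of $\Gamma$, is cut out by finitely many linear inequalities in the chosen basis, so it is semialgebraic, in particular definable. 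Therefore $\Delta$, as the preimage of a definable set under a definable map, is definable. Finally $\fB_\phi(T)=\fD_\phi(T)\cap\Delta$ is definable as an intersection of two definable sets.

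For boundedness of $\fB_\phi(T)$ I would argue as follows. By Lemma~\ref{bt} it suffices to bound $\fB_\phi(1)$. A point $z\in\fB_\phi(1)$ lies in $\Delta$, so ${\pr}(\eta(z))\in F$, which forces each coordinate of $\eta(z)$ to differ from the average $\frac1N\sum_{v'}\eta(z)_{v'}\cdot N_v$ by a bounded amount; equivalently all the $\eta(z)_v$ are, up to an additive constant bounded in terms of $F$, equal to one another. On the other hand the membership $z\in\fD_\phi(1)$ bounds $\prod_v\max_i|\phi(z)_i|_v^{1/u_i}$ from above, and since $\phi$ is non-constant, the generalization of \cite[Lemma~6.1]{BrNa} (or more simply the integrality-type estimates already invoked in the proof of Lemma~\ref{finiteness}) relates $\max_i|\phi(z)_i|_v$ from below to a power of $\max_i|z_i|_v$; combining these at each place with the near-equality of the $\eta(z)_v$ coming from $\Delta$ gives a uniform upper bound on each $|z_{v,i}|_v$, i.e.\ $\fB_\phi(1)$ is bounded. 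I expect this last step — extracting a genuine two-sided comparison between $\max_i|\phi(z)_i|_v$ and $\max_i|z_i|_v$ uniformly over all archimedean $v$, and then feeding it through the $\Delta$-constraint — to be the main obstacle, since one must ensure the lower bound on $|\phi(z)|_v$ does not degenerate near the locus where several coordinates $z_i$ vanish; here the condition (2) in Lemma~\ref{morphism} that $\sqrt{(f_1,\ldots,f_m)}\supseteq(x_1,\ldots,x_m)$ is what guarantees $\phi$ is everywhere defined and hence that no such degeneration occurs.
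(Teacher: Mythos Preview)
Your proposal is correct and follows the same outline as the paper: definability of $\Delta$ via $\log$ and absolute-value formulas, semi-algebraicity of $\fD_\phi(T)$, and boundedness of $\fB_\phi(T)$ by combining an upper bound on $\sum_v\eta(z)_v$ coming from $\fD_\phi(T)$ with the constraint ${\pr}(\eta(z))\in F$ coming from~$\Delta$. The one place where the paper is cleaner is precisely the step you flag as the main obstacle, namely the uniform lower bound $\max_j|\phi(z)_j|_v^{1/u_j}\gtrsim\max_i|z_i|_v^{e/w_i}$: instead of extracting it from the polynomial identities of \cite{BrNa}, the paper observes that the ratio $\max_j|f_j(z)|_v^{1/u_j}\big/\max_i|z_i|_v^{e/w_i}$ descends to a continuous strictly positive function on the compact space $\P(w)(\K_v)$ and is therefore bounded away from zero---this compactness argument handles the non-degeneration issue in one line and avoids any case analysis.
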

	
	\begin{proof}
	The set $\Delta$ is defined by inequalities involving logarithms and absolute values and is therefore definable in $\R_{\exp}$.  The set $\fD_\phi(T)$ is semi-algebraic and hence definable as well.  It follows that the intersection $\fB_\phi(T)$ is definable in $\R_{\exp}$.

        For all $v\in\Omega_{\K}$ the continuous function
        \begin{align*}
        \P(w)(\K_v)&\longrightarrow\R_{>0}\\
        z&\longmapsto\frac{\max\limits_{1\le j\le m}|f_j(z)|_v^{1/u_j}}
        {\max\limits_{1\le i\le m}|z_i|_v^{e/w_i}}
        \end{align*}
        is bounded by compactness of $\P(w)(\K_v)$.  This implies that there exists $C_\phi>0$ such that for all $z\in\fD_\phi(T)$ we have
        \[
        \sum_{v\in\Omega_{\K}}\log\max_{1\le i\le m}|z_{v,i}|_v^{1/w_i}\le\log(C_\phi T^{1/e}).
        \]
	If in addition we have $z\in\Delta$, then
        $\bigl(\log\max_{1\le i\le m}|z_{v,i}|_v^{1/w_i}\bigr)_{v\in\Omega_{\K}}$ lies in $\pr^{-1}F$.
        It follows that if $z$ is in $\fB_\phi(T)$, then $\max_{1\le i\le m}|z_{v,i}|_v$ is bounded for all $v\in\Omega_{\K}$, so $\fB_\phi(T)$ is bounded.
        \end{proof}
	
	Let $\fP$ a subset of $\R^{k}$, and let $(w_1,\ldots,w_k)$ be a tuple of positive integers.  We consider the \textit{weighted expanding set} $T_*\fP$ defined in \cite[Section 4]{De}, i.e.
	\begin{equation*}
		T_*\fP=\{(T^{w_{1}}x_{1},\dots,T^{w_{k}}x_{k}):(x_{1},\dots,x_{k})\in\fP\}.
	\end{equation*}

	We may now state a generalization of \cite[Proposition 4.4]{De} to lattice cosets.
	
	\begin{lemma} 
		\label{4.4}
		Let $\fP$ be a bounded subset of $\R^k$ definable in $\R_{\exp}$, let $\Lambda$ be a lattice in $\R^{k}$ and let $x\in\R^{k}$. We have
		\begin{equation*}
			\#((x+\Lambda)\cap T_*\fP)=\frac{\vol_{k}(\fP)}{\det\Lambda}T^{|w|}+O(T^{|w|-\min\{w_1,\ldots,w_k\}})
                        \quad\text{as }T\to\infty,
		\end{equation*}
                with an implied constant depending on $\fP$ and~$\Lambda$.
	\end{lemma}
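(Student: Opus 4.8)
The plan is to reduce the statement for an arbitrary lattice coset $x+\Lambda$ to the case of the standard lattice $\Z^k$ and then invoke the counting result of Barroero and Widmer for definable sets. First I would perform the linear change of variables: choose a matrix $A\in\mathrm{GL}_k(\R)$ with $A\Z^k=\Lambda$, so that $\#((x+\Lambda)\cap T_*\fP) = \#(\Z^k\cap A^{-1}(T_*\fP - x))$. The subtlety is that $A^{-1}(T_*\fP-x)$ is \emph{not} of the form $T_*(\text{something fixed})$, because the weighted dilation $T_*$ does not commute with the linear map $A^{-1}$ and the translation by $x$ destroys homogeneity altogether. To handle this I would work instead with the \emph{family} of sets
\[
\fP_{t} := t^{-1}_*\bigl(A^{-1}(t_*\fP) - A^{-1}x\bigr) \subseteq \R^k,
\qquad t\ge 1,
\]
where $t^{-1}_*$ denotes the inverse weighted dilation; then $\Z^k\cap A^{-1}(T_*\fP-x) = \Z^k\cap T_*\fP_T$ exactly, and one checks that $\{\fP_t\}_{t\ge1}$ is a definable family in $\R_{\exp}$ (it is cut out using the exponential, since $t^{w_i}$ is $\exp(w_i\log t)$), uniformly bounded in $t$ because $\fP$ is bounded and the translation term $t^{-1}_*A^{-1}x \to 0$.

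The key step is then to apply the Barroero–Widmer counting theorem for definable families (their main result in \cite{BW}): for a bounded definable family $\{\fP_t\}$ in an o-minimal structure, one has
\[
\#(\Z^k\cap T_*\fP_T) = \vol_k(\fP_T)\,T^{|w|} + O\!\bigl(\max_{d<k} V_d(\fP_T)\,T^{w_{i_1}+\cdots+w_{i_d}}\bigr),
\]
where the $V_d$ are the $d$-dimensional volumes of projections, and the implied constant is uniform over the family. Since the family is uniformly bounded and definable, the lower-dimensional terms $V_d(\fP_T)$ are uniformly bounded in $T$, so the error is $O(T^{|w|-\min_i w_i})$ as claimed. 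It remains to compute the main term: $\vol_k(\fP_T) = (\det A)^{-1}\vol_k(\fP) + o(1)$ as $T\to\infty$, because $\vol_k(t^{-1}_*(A^{-1}(t_*\fP))) = (\det A)^{-1}\vol_k(\fP)$ exactly (the weighted dilation and its inverse cancel in the Jacobian), and the translation by the vanishing vector $t^{-1}_*A^{-1}x$ changes the volume by at most the measure of a thin boundary neighbourhood, which tends to $0$ by definability of $\partial\fP$. Absorbing this $o(1)\cdot T^{|w|}$ into the error term and using $\det A = \det\Lambda$ yields the formula.

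Alternatively, and perhaps more cleanly, I would avoid the moving-family argument entirely: write $\Z^k = A^{-1}\Lambda$, note $A^{-1}x$ is a fixed vector, and observe that $T_*\fP - x$ differs from $T_*\fP$ by a bounded translation while $A^{-1}$ takes the weighted-homogeneous $T_*\fP$ to a set sandwiched between $c_1 \cdot T_*(A^{-1}\fP)$-type sets — but this sandwiching is not exact under weighted (as opposed to isotropic) dilation, so one genuinely needs the definable-family version of Barroero–Widmer rather than their single-set statement. I expect the main obstacle to be precisely this point: verifying that the perturbed sets $\fP_t$ form an \emph{honest} definable family in $\R_{\exp}$ with the uniformity needed to make the error term in \cite{BW} independent of $t$, and that the translation vector's decay is fast enough not to contaminate the main term. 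Everything else — the volume computation, the identification $\det A=\det\Lambda$, and the passage back to the original coset — is routine.
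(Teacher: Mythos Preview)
Your approach has a genuine gap. The claim that the family $\{\fP_t\}_{t\ge1}$ is uniformly bounded is false in general. Writing out the coordinates, a point $z\in\fP_t$ satisfies
\[
z_j=\sum_{i=1}^{k}(A^{-1})_{ji}\,t^{\,w_i-w_j}\,p_i-t^{-w_j}(A^{-1}x)_j
\qquad\text{for some }p\in\fP,
\]
and the terms with $w_i>w_j$ blow up as $t\to\infty$ whenever $A$ is not diagonal (which it will not be for a generic lattice $\Lambda$). Concretely, for $k=2$, $w=(1,2)$, $\fP=[-1,1]^2$ and $A=\bigl(\begin{smallmatrix}1&1\\0&1\end{smallmatrix}\bigr)$, the first coordinate of $\fP_t$ ranges over $[-1-t,1+t]$. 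Once uniform boundedness fails, so does your bound on the projection volumes $V_d(\fP_T)$, and the error term in Barroero--Widmer can be as large as $T^{(k-1)\max_i w_i}$, which in general exceeds $T^{|w|-\min_i w_i}$.

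The underlying issue is the change of variables to $\Z^k$: it is both unnecessary and harmful. Theorem~1.3 of \cite{BW} applies to an arbitrary lattice $\Lambda$, so there is no need to pass to $\Z^k$; and the linear map $A^{-1}$ destroys the alignment between the weighted dilation $T_*$ and the coordinate axes, which is exactly what one needs to control the coordinate-projection volumes $V_j$. The paper's proof simply takes the definable family
\[
Z_{(T,x)}=\{z\in\R^k\mid x+z\in T_*\fP\}=T_*\fP-x,
\]
parametrized by $(T,x)$, and applies \cite[Theorem~1.3]{BW} directly with the lattice $\Lambda$. Since translation preserves both the volume and the coordinate-projection volumes, one has $\vol_k(Z_{(T,x)})=T^{|w|}\vol_k(\fP)$ exactly (your $o(1)$ worry about the translation is unfounded: translation changes volume by zero, not by a boundary term), and $V_j(Z_{(T,x)})=V_j(T_*\fP)\ll T^{|w|-\min_i w_i}$ because coordinate projection commutes with the weighted dilation. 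This yields the stated error term with no further work.
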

	
	\begin{proof}
        We consider the definable set
        \[
        Z=\bigl\{(T,x,z)\in\R\times\R^k\times\R^k\bigm|
        T>0\text{ and }x+z\in T_*\fP\bigr\},
        \]
        viewed as a family of definable subsets of $\R^k$ parametrized by $(T,x)$.  Then all the fibres
        \[
        Z_{(T,x)} = \{z\in\R^k\mid x+z\in T_*\fP\}
        \]
        are bounded, and we have
        \[
        \#(Z_{(T,x)}\cap\Lambda) = \#((x+\Lambda)\cap T_*\fP).
        \]
        The claim now follows by a result of Barroero and Widmer \cite[Theorem~1.3]{BW}.
	\end{proof}
	
	Recall from Lemma~\ref{vadproperties} that we can write $V^{\fa,\fd}=\bigsqcup_{x\in X}(x+\fm^{\fa})$ for some finite subset $X\subset V^{\fa,\fd}$. In order to determine the asymptotic behaviour of $\cN_{\phi}(\fb,\fa,\fd,T)$, our next objective will be to find the number of points in $V^{\fa,\fd}\cap\fb^{w}\cap \fB_{\phi}(T)$, as we will shortly argue in the proof of Lemma~\ref{4.6}. To do so we will apply Lemma~\ref{4.4} to translates of $\Lambda=\fm^{\fa}\cap\fb^{w}$, a sublattice of $\fa^{w}$. To accomplish that, write the ideal $\fb$ as $\fb=\fa\fc$, with $\fc\subseteq\cO_{\K}$. Without loss of generality, we may take $\fc$ to be square-free. The reason for this is that we ultimately want to make use of equation~\eqref{equation} to find $N_{\phi}(T)$, where the factor $\mu_{\K}(\fc)$ plays a role, which vanishes on non-square-free ideals.
	
	We denote 
	\begin{equation*}
		\cS:=\{\fp\subset\cO_{\K}\mid \fm_{\fp}^{\fa}\neq(\fa_{\fp})^{w}\};
	\end{equation*}
	this set is finite by Lemma~\ref{congruenceconditions}.
	
	Factor $\fc=\fc_{0}\fc_{1}$ as a product of prime ideals, where $\fc_{0}$ is coprime to all $\fp\in \cS$ and $\fc_{1}$ is a (finite) product of primes $\fp\in \cS$. It follows from co-primality of $\fc_{0}$ and $\fc_{1}$ that $\fc=\fc_{0}\cap\fc_{1}$. 
	
	\begin{notation}
        We write
		\begin{equation*}
			\begin{split}
				&\fm^{\fa}_{\fc_{1}}:=\fm^{\fa}\cap (\fa\fc_{1})^{w};
				\\&V_{\fc_{1}}^{\fa,\fd}:=V^{\fa,\fd}\cap(\fa\fc_{1})^{w}.
			\end{split}
		\end{equation*}
	\end{notation}
	
	We then have $V^{\fa,\fd}\cap\fb^{w}=V_{\fc_{1}}^{\fa,\fd}\cap (\fa\fc_{0})^{w}$. Note that $V_{\fc_{1}}^{\fa,\fd}$ takes only finitely many values by Lemma~\ref{finiteness} and the fact that $\fc_{1}$ by construction only takes $2^{|\cS|}$ values. Further, points $x\in V_{\fc_{1}}^{\fa,\fd}\cap (\fa\fc_{0})^{w}$ satisfy the following congruence conditions: 
	\begin{enumerate}
		\item $x$ lies in the finite set $\fS_{\fc_{1}}^{\fa,\fd}:= V_{\fc_{1}}^{\fa,\fd}/\fm^{\fa}_{\fc_{1}}$ modulo $\fm^{\fa}_{\fc_{1}}$ (recall that $V_{\fc_{1}}^{\fa,\fd}$ is $\fm^{\fa}_{\fc_{1}}$-periodic by Lemma~\ref{vadproperties} (1)),
		\item $x$ is 0 modulo $(\fa\fc_{0})^{w}$.
	\end{enumerate}
	We conclude that $V_{\fc_{1}}^{\fa,\fd}\cap (\fa\fc_{0})^{w}$ is a $((\fa\fc_{0})^{w}\cap \fm^{\fa}_{\fc_{1}})$-periodic subset of $(\fa\fc_{0})^{w}$ and can therefore be written as the union of (finitely many) translates of $(\fa\fc_{0})^{w}\cap \fm^{\fa}_{\fc_{1}}$ in $(\fa\fc_{0})^{w}\subset \fa^{w}$.
	
	We show a version of the Chinese remainder theorem applied to modules.
	
	\begin{lemma}
		\label{CRT}
		Let $M$ be a module over a ring $R$, and $M_{0},M_{1}\subset M$ submodules of $M$ such that $M_{0}+M_{1}=M$. We then get an isomorphism
		\begin{align*}
			M/(M_{0}\cap M_{1}) &\isom M/M_{0}\times M/M_{1}\\
			m\bmod (M_{0}\cap M_{1}) &\longmapsto (m\bmod M_{0},\, m\bmod M_{1}).
		\end{align*}
	\end{lemma}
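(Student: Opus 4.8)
The plan is to prove the standard module-theoretic Chinese Remainder Theorem by exhibiting the inverse of the stated map explicitly, or equivalently by a short diagram-chasing argument. Since this is a routine fact, I would keep the proof brief.

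First I would check that the map is well defined: if $m\equiv m'\pmod{M_0\cap M_1}$, then $m-m'\in M_0\cap M_1\subseteq M_0$ and $m-m'\in M_1$, so $(m\bmod M_0, m\bmod M_1)=(m'\bmod M_0, m'\bmod M_1)$. It is clearly $R$-linear, being induced by the two quotient maps $M\to M/M_0$ and $M\to M/M_1$. Next I would verify injectivity: if $m$ maps to $(0,0)$, then $m\in M_0$ and $m\in M_1$, hence $m\in M_0\cap M_1$, so $m\equiv 0$ in $M/(M_0\cap M_1)$. The only real content is surjectivity, and here I would use the hypothesis $M_0+M_1=M$: given $(x\bmod M_0, y\bmod M_1)$, write $x-y=m_0+m_1$ with $m_i\in M_i$ (possible since $x-y\in M=M_0+M_1$), and set $m=x-m_0=y+m_1$. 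Then $m\equiv x\pmod{M_0}$ and $m\equiv y\pmod{M_1}$, so $m$ is a preimage.

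Alternatively, and perhaps more cleanly, I would phrase this via the exact sequence
\[
0\longrightarrow M_0\cap M_1\longrightarrow M\xrightarrow{\ (\mathrm{pr}_0,\mathrm{pr}_1)\ } M/M_0\times M/M_1,
\]
where exactness at $M$ is immediate, and then observe that the cokernel of the last map is $M/(M_0+M_1)=0$ by hypothesis, so the map $M\to M/M_0\times M/M_1$ is surjective with kernel $M_0\cap M_1$; passing to the quotient gives the asserted isomorphism. I do not anticipate any genuine obstacle here — the statement is elementary commutative (or even non-commutative) algebra, and the only hypothesis that gets used in an essential way is $M_0+M_1=M$, which is exactly what forces surjectivity. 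The proof is a two-line diagram chase or the explicit construction above, and I would present whichever is shorter in the paper's style.
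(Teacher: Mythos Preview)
Your proof is correct and follows essentially the same approach as the paper: injectivity is immediate, and surjectivity is obtained by using $M_0+M_1=M$ to construct an explicit preimage. The paper's construction decomposes the two target elements separately (writing $a=a_0+a_1$, $b=b_0+b_1$ and taking $a_1+b_0$), which amounts to the same element as your $m=x-m_0$ after the obvious identification.
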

	
	\begin{proof}
		The map is clearly injective. To prove that it is also surjective, let $a\in M/M_{0}$ and $b\in M/M_{1}$, and write $a=a_{0}+a_{1}$, $b=b_{0}+b_{1}$ with $a_{0},b_{0}\in M_{0}$ and $a_{1},b_{1}\in M_{1}$. Then, the image of $a_{1}+b_{0}\in M$ is $(a_{1},b_{0})$, which is equal to $(a,b)$ in the target space. Hence, the map is surjective.
	\end{proof}
	
	Note that $(\fa\fc_{0})^{w}+\fm^{\fa}_{\fc_{1}}=\fa^{w}$. Applying Lemma~\ref{CRT}, we get the following diagram.
	\begin{equation*}
		\xymatrix{\fa^{w}\ar@{->>}[d]\ar@{->>}[rd]^(.40){\pi} & &\\
			\fa^{w}/((\fa\fc_{0})^{w}\cap \fm^{\fa}_{\fc_{1}})\ar[r]^(.48){\sim} & \fa^{w}/(\fa\fc_{0})^{w}\times \fa^{w}/\fm^{\fa}_{\fc_{1}}&\{0\}\times \fS_{\fc_{1}}^{\fa,\fd} \ar@{^(->}[l]}
	\end{equation*}
	
	\noindent Thus, $V_{\fc_{1}}^{\fa,\fd}\cap (\fa\fc_{0})^{w}=\{m\in \fa^{w}\, |\, \pi(m)\in\{0\}\times \fS_{\fc_{1}}^{\fa,\fd}\}$ can be written as the union of $\#(\{0\}\times \fS_{\fc_{1}}^{\fa,\fd}\})=\#\fS_{\fc_{1}}^{\fa,\fd}$ translates of $(\fa\fc_{0})^{w}\cap \fm^{\fa}_{\fc_{1}}$ inside $\fa^{w}$.
	
	\begin{lemma}
		We have $\det((\fa\fc_{0})^{w}\cap \fm^{\fa}_{\fc_{1}})=N(\fc_{0})^{|w|}(\cO_{\K}^{m}:\fm^{\fa}_{\fc_{1}})\dfrac{D_{\K}^{m/2}}{2^{mr_{2}}}$.
	\end{lemma}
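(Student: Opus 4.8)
The volume (covolume) of the lattice $(\fa\fc_0)^w\cap\fm^{\fa}_{\fc_1}$ inside $\K^m\cong\R^{Nm}$ factors as a product of a global index term and a fixed normalizing constant coming from the archimedean embedding of $\cO_\K^m$. The strategy is to relate everything to the reference lattice $\fa^w$ by a sequence of index computations, using multiplicativity of the covolume under finite-index inclusions: if $\Lambda'\subseteq\Lambda$ are full lattices in $\R^{Nm}$ then $\det\Lambda'=(\Lambda:\Lambda')\det\Lambda$.

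First I would record the base case $\det(\cO_\K^m)=(D_\K^{1/2}/2^{r_2})^m = D_\K^{m/2}/2^{mr_2}$, which is the standard Minkowski covolume of $\cO_\K$ in $\K\otimes\R\cong\R^{r_1}\times\C^{r_2}$ raised to the $m$-th power (the $2^{-r_2}$ per copy arising from the normalization of Lebesgue measure on each complex place). Next, the key point from the paragraph preceding the lemma: by the Chinese Remainder argument via Lemma~\ref{CRT}, $(\fa\fc_0)^w\cap\fm^{\fa}_{\fc_1}$ has index in $\fa^w$ equal to $(\fa^w:(\fa\fc_0)^w)\cdot(\fa^w:\fm^{\fa}_{\fc_1})$, since $(\fa\fc_0)^w+\fm^{\fa}_{\fc_1}=\fa^w$ makes the two quotient maps independent. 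Now $(\fa^w:(\fa\fc_0)^w)=\prod_i(\fa^{w_i}:(\fa\fc_0)^{w_i})=\prod_i N(\fc_0)^{w_i}=N(\fc_0)^{|w|}$, because $\fc_0$ is coprime to $\fa$ (hence to $\fa^{w_i}$) and $(\cO_\K:\fc_0^{k})=N(\fc_0)^k$. For the modulus term, $(\fa^w:\fm^{\fa}_{\fc_1})$ should be rewritten: since $\fm^{\fa}_{\fc_1}=\fm^{\fa}\cap(\fa\fc_1)^w\subseteq\fa^w$ and the set $\cS$ of bad primes consists of primes with $\ord_\fp\fa$ possibly nonzero but $\fc_1$ supported on $\cS$, one can pass to the absolute reference: $(\fa^w:\fm^{\fa}_{\fc_1}) = (\cO_\K^m:\fm^{\fa}_{\fc_1})/(\cO_\K^m:\fa^w) = (\cO_\K^m:\fm^{\fa}_{\fc_1})\cdot N(\fa)^{-|w|}$, using $(\cO_\K^m:\fa^w)=\prod_i N(\fa)^{w_i}=N(\fa)^{|w|}$.

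Assembling these:
\begin{equation*}
\det\bigl((\fa\fc_0)^w\cap\fm^{\fa}_{\fc_1}\bigr)
= (\fa^w:(\fa\fc_0)^w)\cdot(\fa^w:\fm^{\fa}_{\fc_1})\cdot\det(\fa^w)
= N(\fc_0)^{|w|}\cdot\frac{(\cO_\K^m:\fm^{\fa}_{\fc_1})}{N(\fa)^{|w|}}\cdot N(\fa)^{|w|}\cdot\frac{D_\K^{m/2}}{2^{mr_2}},
\end{equation*}
and the $N(\fa)^{|w|}$ factors cancel, leaving $N(\fc_0)^{|w|}(\cO_\K^m:\fm^{\fa}_{\fc_1})D_\K^{m/2}/2^{mr_2}$, as claimed. (I would double-check whether the statement's $\fc_0$ is what is written as $\fc_0$ here: in the lemma statement it reads $N(\fc_0)$, matching the square-free coprime-to-$\cS$ part, which is consistent.)

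The only mildly delicate point is the interchange of $\gcd$ and $\det$ encapsulated in the CRT step — one must be sure that $(\fa\fc_0)^w$ and $\fm^{\fa}_{\fc_1}$ have coprime ``support'' in the appropriate sense so that $(\fa^w : (\fa\fc_0)^w\cap\fm^{\fa}_{\fc_1})$ splits as the product of the two indices; this is exactly guaranteed by $(\fa\fc_0)^w+\fm^{\fa}_{\fc_1}=\fa^w$, which was verified just above the lemma using that $\fc_0$ is coprime to every $\fp\in\cS$ while $\fm^{\fa}_{\fc_1}$ agrees with $\fa^w$ away from $\cS\cup\{\text{primes of }\fc_1\}$ — so no obstruction arises. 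Everything else is the multiplicativity of lattice covolumes under finite index and the standard discriminant covolume formula, both routine.
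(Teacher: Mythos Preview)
Your proof is correct and follows essentially the same route as the paper: split the index via the Chinese Remainder Lemma, compute $(\fa^w:(\fa\fc_0)^w)=N(\fc_0)^{|w|}$ and $(\fa^w:\fm^\fa_{\fc_1})=(\cO_\K^m:\fm^\fa_{\fc_1})/N(\fa)^{|w|}$, and multiply by $\det(\fa^w)=D_\K^{m/2}N(\fa)^{|w|}/2^{mr_2}$. One small remark: your justification ``because $\fc_0$ is coprime to $\fa$'' is not needed (and not guaranteed by the setup); the identity $(\fa^{w_i}:(\fa\fc_0)^{w_i})=N(\fc_0)^{w_i}$ holds for arbitrary nonzero ideals in a Dedekind domain, which is exactly how the paper phrases it.
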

	
	\begin{proof}
		It follows from Lemma~\ref{CRT} that
		\begin{equation*}
			\begin{split}
				\det((\fa\fc_{0})^{w}\cap \fm^{\fa}_{\fc_{1}})&=(\fa^{w}:((\fa\fc_{0})^{w}\cap \fm^{\fa}_{\fc_{1}}))\det(\fa^{w})\\&=(\fa^{w}:(\fa\fc_{0})^{w})(\fa^{w}:\fm^{\fa}_{\fc_{1}})\det(\fa^{w}).
			\end{split}
		\end{equation*}
		Moreover, $\fa^{w}/(\fa\fc_{0})^{w}$ has cardinality $N(\fc_{0})^{|w|}$, since, as a consequence of $\cO_{\K}$ being a Dedekind domain, $(\fa^{w_{i}}:(\fa\fc_{0})^{w_{i}})=(\cO_{\K}:\fc_{0}^{w_{i}})=N(\fc_{0})^{w_{i}}$ for all $i$. Further, $(\fa^{w}:\fm^{\fa}_{\fc_{1}})$ is equal to $(\cO_{\K}^{m}:\fm^{\fa}_{\fc_{1}})/N(\fa)^{|w|}$, and by \cite[Section~4]{De} we know that $\det(\fa^{w})=\frac{D_{\K}^{m/2}N(\fa)^{|w|}}{2^{mr_{2}}}$. This concludes the proof.
	\end{proof}
	
	\begin{lemma}
		\label{v}
		The cardinality of $V^{\fa,\fd}\cap\fb^{w}\cap \fB_{\phi}(T)$ is 
		\begin{equation*}
			\dfrac{\#\fS_{\fc_{1}}^{\fa,\fd}}{N(\fc_{0})^{|w|}(\cO_{\K}^{m}:\fm^{\fa}_{\fc_{1}})}\dfrac{2^{mr_{2}}}{D_{\K}^{m/2}}\vol_{Nm}(\fB_{\phi}(1))T^{\frac{|w|}{e}}+O(T^{\frac{|w|}{e}-\frac{\min\{w_1,\ldots,w_m\}}{eN}}).
		\end{equation*}
	\end{lemma}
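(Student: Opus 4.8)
The plan is to decompose $V^{\fa,\fd}\cap\fb^{w}$ into finitely many cosets of a fixed lattice and then apply Lemma~\ref{4.4} to each coset. First I would recall from the discussion preceding the statement that, writing $\fb=\fa\fc$ with $\fc=\fc_{0}\fc_{1}$ as above, we have the identification $V^{\fa,\fd}\cap\fb^{w}=V_{\fc_{1}}^{\fa,\fd}\cap(\fa\fc_{0})^{w}$, and that this set is a finite union of exactly $\#\fS_{\fc_{1}}^{\fa,\fd}$ translates of the lattice $\Lambda:=(\fa\fc_{0})^{w}\cap\fm^{\fa}_{\fc_{1}}$ inside $\fa^{w}$; say $V^{\fa,\fd}\cap\fb^{w}=\bigsqcup_{x\in X'}(x+\Lambda)$ with $\#X'=\#\fS_{\fc_{1}}^{\fa,\fd}$.

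Next I would bring in the archimedean constraint. By Lemma~\ref{bt} we have $\fB_{\phi}(T)=T_{*}^{1/eN}\fB_{\phi}(1)$ with the weight~$w$ action, so intersecting with $\fB_{\phi}(T)$ is the same as intersecting with a weighted-expanding set: viewing $\prod_{v\in\Omega_{\K}}\K_{v}^{m}$ as $\R^{Nm}$ with the weight vector in which each coordinate group of block~$i$ (of real dimension $N_{v}$, summing to $N$ over $v$) carries weight $w_{i}$, the set $\fB_{\phi}(T)$ equals $(T^{1/eN})_{*}\fB_{\phi}(1)$ in the notation of the weighted expanding set $T'_{*}\fP$ with $T'=T^{1/eN}$ and $|w|_{\text{expanded}}=N|w|$. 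By Lemma~\ref{definable}, $\fB_{\phi}(1)$ is bounded and definable in $\R_{\exp}$, so Lemma~\ref{4.4} applies to each translate $x+\Lambda$:
\begin{equation*}
\#\bigl((x+\Lambda)\cap\fB_{\phi}(T)\bigr)=\frac{\vol_{Nm}(\fB_{\phi}(1))}{\det\Lambda}(T^{1/eN})^{N|w|}+O\bigl((T^{1/eN})^{N|w|-\min_i w_i}\bigr),
\end{equation*}
where the minimum of the expanded weights is $\min\{w_{1},\dots,w_{m}\}$. Simplifying the exponents gives the main term $T^{|w|/e}$ and error $O(T^{|w|/e-\min\{w_1,\dots,w_m\}/(eN)}）$, matching the claimed shape.

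Finally I would sum over the $\#\fS_{\fc_{1}}^{\fa,\fd}$ translates and substitute the determinant computed in the preceding lemma, namely $\det\Lambda=N(\fc_{0})^{|w|}(\cO_{\K}^{m}:\fm^{\fa}_{\fc_{1}})\,D_{\K}^{m/2}/2^{mr_{2}}$. This yields
\begin{equation*}
\#\bigl(V^{\fa,\fd}\cap\fb^{w}\cap\fB_{\phi}(T)\bigr)=\frac{\#\fS_{\fc_{1}}^{\fa,\fd}}{N(\fc_{0})^{|w|}(\cO_{\K}^{m}:\fm^{\fa}_{\fc_{1}})}\cdot\frac{2^{mr_{2}}}{D_{\K}^{m/2}}\,\vol_{Nm}(\fB_{\phi}(1))\,T^{|w|/e}+O\bigl(T^{|w|/e-\min\{w_1,\dots,w_m\}/(eN)}\bigr),
\end{equation*}
as desired. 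The main subtlety is bookkeeping: one must be careful that the archimedean space $\prod_{v}\K_{v}^{m}\cong\R^{Nm}$ carries the correct weighted structure (each of the $m$ blocks, whichever real or complex place it sits in, gets weight $w_{i}$, so the total weight in the sense of Lemma~\ref{4.4} is $N\lvert w\rvert$ and the smallest weight is $\min_i w_i$, not $\min_i w_i/N$), and that the exponent arithmetic $(T^{1/eN})^{N|w|}=T^{|w|/e}$ and $(T^{1/eN})^{N|w|-\min_i w_i}=T^{|w|/e-\min_i w_i/(eN)}$ comes out exactly as in the statement. The error terms from the finitely many translates can simply be absorbed into a single $O$-term since the implied constants depend only on $\fB_{\phi}(1)$ and on the lattice $\Lambda$, which in turn takes only finitely many values as $\fc_{1}$ ranges over its $2^{\lvert\cS\rvert}$ possibilities and $\fd$ over the finite set $\cD_{\phi}$.
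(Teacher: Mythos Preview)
Your proposal is correct and follows exactly the paper's approach: apply Lemma~\ref{4.4} to each of the $\#\fS_{\fc_{1}}^{\fa,\fd}$ translates of $\Lambda=(\fa\fc_{0})^{w}\cap\fm^{\fa}_{\fc_{1}}$ with $\fP=\fB_{\phi}(1)$, invoking Lemma~\ref{bt} for the scaling and Lemma~\ref{definable} for the hypotheses of Lemma~\ref{4.4}; the paper's own proof is just the one-line version of what you wrote out. One small caution: in your final remark you say $\Lambda$ ``takes only finitely many values as $\fc_{1}$ ranges over its $2^{|\cS|}$ possibilities,'' but $\Lambda$ also depends on $\fc_{0}$, which is not bounded---the implied constant in Lemma~\ref{v} is allowed to depend on $\fb$ (hence on $\fc_{0}$), and the uniformity in $\fc_{0}$ is handled separately in Lemma~\ref{4.6} via the $\fc_{0}=\lambda\fr$ trick.
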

	
    \begin{proof}
	This follows from Lemma~\ref{4.4} applied to $\#\fS_{\fc_{1}}^{\fa,\fd}$ translates of the lattice $\Lambda=(\fa\fc_{0})^{w}\cap \fm^{\fa}_{\fc_{1}}$, taking $\fP=\fB_{\phi}(1)\subset\R^{Nm}$ and using Lemma~\ref{bt}. 
	\end{proof}
	
	We derive a formula for $\cN_{\phi}(\fb,\fa,\fd,T)$.
	
	\begin{lemma}
		\label{4.6}
		We have
		\begin{multline*}
			\cN_{\phi}(\fb,\fa,\fd,T)=\dfrac{\#\fS_{\fc_{1}}^{\fa,\fd}}{N(\fc_{0})^{|w|}(\cO_{\K}^{m}:\fm^{\fa}_{\fc_{1}})}\dfrac{1}{\#\mu(\K)}\dfrac{2^{mr_{2}}}{D_{\K}^{m/2}}\vol_{Nm}(\fB_{\phi}(1))T^{\frac{|w|}{e}}\\
                        +O(T^{\frac{|w|}{e}-\frac{\min\{w_1,\ldots,w_m\}}{eN}}N(\fc_{0})^{-|w|+\frac{\min\{w_1,\ldots,w_m\}}{N}}).
		\end{multline*}
	\end{lemma}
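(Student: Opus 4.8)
The plan is to combine Proposition~\ref{4.2} with Lemma~\ref{v} directly. By Proposition~\ref{4.2}, we have
\[
\cN_{\phi}(\fb,\fa,\fd,T) = \frac{1}{\#\mu(\K)}\#\bigl(V^{\fa,\fd}\cap\fb^w\cap\fB_\phi(T)\bigr),
\]
so the statement follows by substituting the asymptotic formula for $\#(V^{\fa,\fd}\cap\fb^w\cap\fB_\phi(T))$ from Lemma~\ref{v} and dividing by $\#\mu(\K)$. The main term transforms exactly as claimed, and the error term picks up a factor $1/\#\mu(\K)$, which can be absorbed into the implied constant.

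The one point requiring a little care is the precise shape of the error term: Lemma~\ref{v} as stated gives an error of $O(T^{|w|/e - \min\{w_1,\ldots,w_m\}/(eN)})$ with an implied constant depending on $\fP = \fB_\phi(1)$ and on the lattice $\Lambda = (\fa\fc_0)^w \cap \fm^{\fa}_{\fc_1}$, whereas Lemma~\ref{4.6} makes the dependence on $\fc_0$ explicit as the factor $N(\fc_0)^{-|w| + \min\{w_1,\ldots,w_m\}/N}$. So the plan is to trace through Lemma~\ref{4.4} and track how the implied constant scales with the covolume of $\Lambda$. In Lemma~\ref{4.4} the error term comes from \cite[Theorem~1.3]{BW}, which bounds the discrepancy between a lattice-point count and the volume by a sum over the covolumes of projections of $\Lambda$ onto coordinate subspaces of dimension $< k$; each such projected covolume is bounded above by (a constant times) $(\det\Lambda)^{(k-1)/k}$, hence here by a constant times $\det\bigl((\fa\fc_0)^w\cap\fm^{\fa}_{\fc_1}\bigr)^{(Nm-1)/(Nm)}$. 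After the weighted rescaling $T_* \fP$ with weights coming from $w$ (each $w_i$ occurring $N$ times), the relevant power of $T$ in the error is $|w|/e - \min\{w_1,\ldots,w_m\}/(eN)$, and the covolume-dependent factor, using $\det\bigl((\fa\fc_0)^w\cap\fm^{\fa}_{\fc_1}\bigr) \asymp N(\fc_0)^{|w|}$ from the previous lemma, contributes $N(\fc_0)^{-|w|+\min\{w_1,\ldots,w_m\}/N}$ after the normalisation by $1/\det\Lambda$ in the main term is accounted for. Summing over the $\#\fS_{\fc_1}^{\fa,\fd}$ translates (a number bounded independently of $\fc_0$, by Lemma~\ref{finiteness} and finiteness of $\cS$) keeps the error in the stated form.

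The hard part is purely bookkeeping: one must verify that the implied constant in \cite[Theorem~1.3]{BW}, when applied to a varying family of lattices $(\fa\fc_0)^w\cap\fm^{\fa}_{\fc_1}$ (which are all sublattices of the fixed lattice $\fa^w$ of index a power of $N(\fc_0)$), depends on the lattice only through the stated power of $N(\fc_0)$, uniformly in $\fc_0$; this uses that the geometry of $\fc_0^{-1}\Lambda$ stays in a bounded region (the successive minima of the rescaled lattices are bounded). I would also remark that the remaining objects $\fa$, $\fd$, $\fc_1$ range over finite sets, so their contribution to implied constants is harmless. Once this uniformity is in hand, Lemma~\ref{4.6} is an immediate consequence of Proposition~\ref{4.2} and Lemma~\ref{v}.
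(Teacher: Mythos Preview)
Your derivation of the main term via Proposition~\ref{4.2} and Lemma~\ref{v} matches the paper exactly. The divergence is entirely in how the $N(\fc_0)$-dependence of the error term is obtained, and here your argument has a genuine gap while the paper takes a different, cleaner route.

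The paper does \emph{not} try to track the lattice dependence through \cite[Theorem~1.3]{BW}. Instead it writes $\fc_0=\lambda\fr$ with $\fr\in R_{\K}$ the fixed class representative of~$[\fc_0]$ and $\lambda\in\K^\times$, and uses the identities $\fI_w(\lambda^{-1}_*x)=\lambda^{-1}\fI_w(x)$ and $S_{u,\infty}(\phi(\lambda^{-1}_*x))=N((\lambda))^{-e}S_{u,\infty}(\phi(x))$ to obtain
\[
\cN_\phi(\fa\fc_0\fc_1,\fa,\fd,T)=\cN_\phi(\fa\fr\fc_1,\fa,\fd,N((\lambda))^{-e}T).
\]
Now one applies the crude estimate~\eqref{eq} (i.e.\ Lemma~\ref{v} without any explicit $\fc_0$-dependence) to the right-hand side: since $\fr$ ranges over the finite set $R_\K$, the implied constant is uniform, and the substitution $T\mapsto N((\lambda))^{-e}T$ produces exactly the factor $N(\fc_0)^{-|w|+\min_i w_i/N}$ in the error. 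No analysis of successive minima is needed.

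Your approach, by contrast, attempts to extract the $\fc_0$-dependence directly from the Barroero--Widmer bound, and several steps do not hold up. The error in \cite[Theorem~1.3]{BW} is controlled by $\sum_{j<k}V_j(Z)/(\lambda_1\cdots\lambda_j)$, where the $\lambda_i$ are successive minima of~$\Lambda$ and the $V_j$ are volumes of projections of the \emph{region}~$Z$, not of the lattice; your assertion that ``each such projected covolume is bounded above by $(\det\Lambda)^{(k-1)/k}$'' conflates these and is not what the theorem says. There is also no ``normalisation by $1/\det\Lambda$'' in the error term to account for --- the error is an absolute bound, not a relative one --- so your derivation of the exponent $-|w|+\min_i w_i/N$ does not go through as written. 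Your closing remark that the successive minima of the rescaled lattices stay bounded is the right intuition, but since $\fc_0$ is an ideal rather than a scalar, making this precise requires choosing an element $\lambda\in\K^\times$ generating $\fc_0$ up to a bounded ideal --- which is precisely the paper's rescaling trick.
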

	
	\begin{proof}
		By Proposition~\ref{4.2} and Lemma~\ref{v}, we have
		\begin{multline}
			\label{eq}
			\cN_{\phi}(\fb,\fa,\fd,T)=\dfrac{\#\fS_{\fc_{1}}^{\fa,\fd}}{N(\fc_{0})^{|w|}(\cO_{\K}^{m}:\fm^{\fa}_{\fc_{1}})}\dfrac{1}{\#\mu(\K)}\dfrac{2^{mr_{2}}}{D_{\K}^{m/2}}\vol_{Nm}(\fB_{\phi}(1))T^{\frac{|w|}{e}}\\
                        +O(T^{\frac{|w|}{e}-\frac{\min\{w_1,\ldots,w_m\}}{eN}}).
		\end{multline}
		In order to determine the influence of $\fc_{0}$ in the error term, let us write $\fc_{0}=\lambda\fr$ for $\fr\in R_{\K}$ such that $[\fc_{0}]=[\fr]$ and $\lambda\in\K^{\times}$ unique up to units.
		Next, let $x\in\K^{m}-\{0\}$. Note that
		\begin{enumerate}
			\item $\fI_{w}(x)\subseteq\fb\iff\fI_{w}(\lambda_{*}x)\subseteq\lambda\fb$, by Lemma~\ref{scalingidealother} (1);
			\item $S_{u,\infty}(\phi(\lambda_{*}x))=N((\lambda))^{e}S_{u,\infty}(\phi(x))$, since 
			\begin{equation*}
				S_{u,\infty}(\phi(\lambda_{*}x))=S_{u,\infty}(\lambda^{e}_{*}\phi(x))=\prod_{v\in\Omega_{\K}}|\lambda|_{v}^{e} S_{u,\infty}(\phi(x))
			\end{equation*}
			and $\prod_{v\in\Omega_{\K}}|\lambda|_{v}=N((\lambda))$.
		\end{enumerate}
		Consequently, 
		\begin{equation*}
			\begin{split}
				\cN_{\phi}(\fb,\fa,\fd,T)&=\cN_{\phi}(\fc_{0}\fc_{1}\fa,\fa,\fd,T)=\cN_{\phi}(\lambda\fr\fc_{1}\fa,\fa,\fd,T)\\&=\widehat{\#}\{x\in \cO_{\K}^{\times}\backslash (V^{\fa,\fd}-\{0\}) \mid  \fI_{w}(x)\subseteq\lambda\fr\fc_{1}\fa,\, S_{u,\infty}(\phi(x))\leq T\}\\&=\cN_{\phi}(\fr\fc_{1}\fa,\fa,\fd,N((\lambda))^{-e}T),
			\end{split}
		\end{equation*}
		since, as argued before, $\fI_{w}(x)\subseteq\lambda\fr\fc_{1}\fa\iff \fI_{w}(\lambda^{-1}_{*}x)\subseteq\fr\fc_{1}\fa$ and, on the other hand, $S_{u,\infty}(\phi(x))\leq T\iff S_{u,\infty}(\phi(\lambda^{-1}_{*}x))\leq N((\lambda))^{-e}T$.
		
		Applying equation~\eqref{eq} to $\cN_{\phi}(\fr\fc_{1}\fa,\fa,\fd,N((\lambda))^{-e}T)$ yields the result, since $\fc_{0}=\lambda\fr$ by construction.
	\end{proof}
	
	\begin{remark}
		$\vol_{Nm}(\fB_{\phi}(1))$ is the volume of the region defined by some polynomial inequalities determined by the choice of $\phi$. We will not give an explicit expression for this volume, as it might not even be possible to do so for an arbitrary $\phi$. However, this value can be numerically approximated in any given example of $\phi$, as we do in Section~\ref{sec4}.
	\end{remark}
	
	\begin{remark}
		Suppose we had defined our counting functions without the weight $1/\#{\Aut x}$. Then Lemma~\ref{4.6} would remain valid except that $\#\mu(\K)$ should be replaced by $\mu_{\K}^{w} := \frac{\#\mu(\K)}{\gcd\{w_{1},\dots,w_{n},\#\mu(\K)\}}$. Namely, as in \cite[Section~4]{De}, one then needs to count the $\mu(\K)$-orbits in $V^{\fa,\fd}\cap\fb^{w}\cap \fB_{\phi}(T)$, the only difference being that we do not assume $\P(w)$ to be \textit{well-formed} (i.e.\ it does not necessarily hold that each $n-1$ elements of $w$ are coprime). We consequently must replace the factor $w$ in \cite[Proposition~4.2]{De} (denoting the number of roots of unity) by $\mu_{\K}^{w}$ (see \cite[Theorem~3.8]{BrNa}). We do not get a strict equality, since in general only orbits with all coordinates non-zero contain exactly $\mu_{\K}^{w}$ points. However, points with at least one coordinate equal to zero lie on a lower-dimensional subvariety, and by applying the same method used to obtain Lemma~\ref{v} to this subvariety, we deduce that its contribution is at most $O((T^{\frac{1}{eN}})^{|w|N-\min\{w_1,\ldots,w_m\}}) = O(T^{\frac{|w|}{e}-\frac{\min\{w_1,\ldots,w_m\}}{eN}})$.
	    Altogether, this shows that when $\cN_{\phi}(\fb,\fa,\fd,T)$ is defined without the factors $1/\#{\Aut x}$, the quantity $\mu_{\K}^{w} \cdot\cN_{\phi}(\fb,\fa,\fd,T)$ is asymptotically equivalent to the number of points in $V^{\fa,\fd}\cap\fb^{w}\cap \fB_{\phi}(T)$, with discrepancy at most $O(T^{\frac{|w|}{e}-\frac{\min\{w_1,\ldots,w_m\}}{eN}})$.
	\end{remark}

	\subsection{The asymptotic formula for $N_{\phi}(T)$}
	
	We now state our main result, which we will prove by combining the information gained from Lemma~\ref{4.6} and equation~\eqref{equation}.
	
	\begin{theorem}
		\label{maintheorem}
                The asymptotic behaviour of $N_{\phi}(T)$ is
		\begin{multline*}
			N_{\phi}(T) = \left(\dfrac{1}{\zeta_{\K}(|w|)}\left(\frac{2^{r_{2}}}{\sqrt{D_{\K}}}\right)^{m}\dfrac{C_{\phi}^{\K}\vol_{Nm}(\fB_{\phi}(1))}{\#\mu(\K)}\right)T^{\frac{|w|}{e}}\\
                        +O(T^{\frac{|w|}{e}-\frac{\min\{w_1,\ldots,w_m\}}{eN}}),
		\end{multline*}
		where
		\begin{equation*}
			C_{\phi}^{\K}:=\sum_{\fa\in R_{\K}}\sum_{\fd\in\cD_{\phi}}\sum_{\substack{\fc_{1}\subseteq\cO_{\K} \\ \text{product} \\ \text{of }\fp\in\cS}}(N(\fd)^{1/e}N(\fa))^{|w|} \dfrac{\mu_{\K}(\fc_{1})\#\fS_{\fc_{1}}^{\fa,\fd}}{(\cO_{\K}^{m}:\fm^{\fa}_{\fc_{1}})}\prod_{\fp\in\cS}(1-N(\fp)^{-|w|})^{-1}.
		\end{equation*}
		If $\K=\Q$ and $w\in\{(1,1),(2)\}$, the error term is taken to be $O(T^{1/e}\log T^{1/e})$.
	\end{theorem}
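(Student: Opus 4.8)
The plan is to substitute the asymptotic formula of Lemma~\ref{4.6} into the exact identity~\eqref{equation} and to evaluate the resulting sums over ideal classes and over square-free ideals~$\fc$. Assume $\cD_\phi$ is finite (Lemma~\ref{finiteness}), so the sums over $\fa\in R_\K$ and $\fd\in\cD_\phi$ are finite, and note that $|w|\ge2$ whenever $\P(w)$ has infinitely many rational points. Since $\mu_\K(\fc)=0$ unless $\fc$ is square-free, we restrict~\eqref{equation} to square-free $\fc$ and factor $\fc=\fc_0\fc_1$ with $\fc_1$ a product of primes of the finite set~$\cS$ and $\fc_0$ coprime to all $\fp\in\cS$; then $\fc_1$ ranges over $2^{|\cS|}$ values. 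Writing $T':=TN(\fd)N(\fa)^{e}$ and using $\mu_\K(\fc)=\mu_\K(\fc_0)\mu_\K(\fc_1)$ together with $T'^{|w|/e}=T^{|w|/e}(N(\fd)^{1/e}N(\fa))^{|w|}$, the main term of Lemma~\ref{4.6} contributes to $N_\phi(T)$ the quantity
\[
\frac{(2^{r_2}/\sqrt{D_\K})^{m}\,\vol_{Nm}(\fB_\phi(1))}{\#\mu(\K)}\,T^{|w|/e}\sum_{\fa}\sum_{\fd}\sum_{\fc_1}\mu_\K(\fc_1)\frac{\#\fS_{\fc_1}^{\fa,\fd}(N(\fd)^{1/e}N(\fa))^{|w|}}{(\cO_\K^m:\fm^\fa_{\fc_1})}\sum_{\fc_0}\frac{\mu_\K(\fc_0)}{N(\fc_0)^{|w|}},
\]
where $\fc_0$ runs over square-free ideals coprime to~$\cS$.

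Next, the $\fc_0$-sum is an absolutely convergent Euler product, and
\[
\sum_{\substack{\fc_0\text{ square-free}\\ \text{coprime to }\cS}}\frac{\mu_\K(\fc_0)}{N(\fc_0)^{|w|}}=\prod_{\fp\notin\cS}\bigl(1-N(\fp)^{-|w|}\bigr)=\frac{1}{\zeta_\K(|w|)}\prod_{\fp\in\cS}\bigl(1-N(\fp)^{-|w|}\bigr)^{-1}.
\]
Substituting this and recognising the surviving finite triple sum as $C_\phi^\K$ produces exactly the main term asserted in the theorem.

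For the error term one first observes that the $\fc$-sum is effectively finite: if a non-zero $x\in V^{\fa,\fd}\cap(\fa\fc)^{w}\cap\fB_\phi(T')$ exists, then $\fI_w(x)$ is an integral ideal divisible by~$\fa\fc$, while $\fB_\phi(T')$ is bounded (Lemma~\ref{definable}) and lies in~$\Delta$, so each archimedean embedding sends $x_i$ to an element of absolute value $\ll T'^{w_i/(eN)}$; hence for any $i$ with $x_i\ne0$ we get $N(\fc_0)^{w_i}\le|N_{\K/\Q}(x_i)|\ll T'^{w_i/e}$, so $N(\fc_0)\ll T^{1/e}$. Thus $\cN_\phi(\fa\fc,\fa,\fd,T')=0$ once $N(\fc_0)>cT^{1/e}$ for a suitable constant~$c$. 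Completing the truncated main-term sum to the full Euler product therefore costs $O\bigl(T^{|w|/e}\sum_{N(\fc_0)>cT^{1/e}}N(\fc_0)^{-|w|}\bigr)=O(T^{1/e})$, while summing the error term of Lemma~\ref{4.6} over the finitely many $\fa,\fd,\fc_1$ and over $N(\fc_0)\le cT^{1/e}$ gives
\[
O\Bigl(T^{\frac{|w|}{e}-\frac{\min\{w_1,\ldots,w_m\}}{eN}}\sum_{N(\fc_0)\le cT^{1/e}}N(\fc_0)^{-|w|+\frac{\min\{w_1,\ldots,w_m\}}{N}}\Bigr).
\]
If $|w|-\min\{w_1,\ldots,w_m\}/N>1$ the Dirichlet series converges, this is $O(T^{|w|/e-\min\{w_1,\ldots,w_m\}/(eN)})$, and it dominates the $O(T^{1/e})$ above: this is the generic case. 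For $\K=\Q$ the only exceptions are $w=(1,1)$ and $w=(2)$, where the truncated sum over $\fc_0$ grows like $\log T^{1/e}$, respectively like $T^{1/e}$, and one is left with the error term $O(T^{1/e}\log T^{1/e})$ of the statement — for $w=(1,1)$ this is Schanuel's classical bound for $\P^1$.

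The main obstacle is the uniformity used in the last step. One needs the implied constant in Lemma~\ref{4.6} to depend on~$\fc_0$ only through the explicit power $N(\fc_0)^{-|w|+\min\{w_1,\ldots,w_m\}/N}$ — which is exactly why Lemma~\ref{4.6} is proved by reducing to one of the finitely many class representatives $\fr$ of $[\fc_0]$ and rescaling~$T$ — and one must control the truncated tail over $\fc_0$ precisely enough to isolate the power saving of the generic case from the logarithmic loss in the two exceptional $\K=\Q$ cases; the algebra of the first two steps is otherwise routine.
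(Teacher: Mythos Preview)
Your argument is correct and follows the same overall strategy as the paper: substitute Lemma~\ref{4.6} into the identity~\eqref{equation}, split $\fc=\fc_0\fc_1$, evaluate the $\fc_0$-sum as the Euler product $\zeta_\K(|w|)^{-1}\prod_{\fp\in\cS}(1-N(\fp)^{-|w|})^{-1}$, and recognise the remaining finite sum as $C_\phi^\K$. The main term computation is essentially identical to the paper's.

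The only notable difference is in the error analysis. The paper disposes of the error term in one sentence by pointing to Schanuel~\cite{Sc} with the identifications $B=T^{1/e}$, $\fa=\fc_0$, $t=|w|-1/N$. You instead make the argument self-contained: you establish the effective truncation $N(\fc_0)\ll T^{1/e}$ directly from the boundedness of $\fB_\phi(1)$ and the norm inequality $N(\fa\fc)^{w_i}\le|N_{\K/\Q}(x_i)|$, bound the tail of the completed Euler product by $O(T^{1/e})$, and then sum the error of Lemma~\ref{4.6} over $N(\fc_0)\le cT^{1/e}$, correctly identifying that the Dirichlet series $\sum N(\fc_0)^{-|w|+\min w_i/N}$ converges precisely when $|w|-\min w_i/N>1$ and that the borderline cases over $\Q$ are $w=(1,1)$ and $w=(2)$. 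You also flag explicitly the uniformity in~$\fc_0$ needed from Lemma~\ref{4.6} (achieved there via the class representative~$\fr$), which the paper uses without comment. Your treatment is more transparent and avoids an external reference; the paper's is more compact.
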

	
	\begin{proof}
		We first want to determine a formula for the last factor in equation~\eqref{equation}, namely $\sum_{\fc\subseteq\cO_{\K}}\mu_{\K}(\fc)\cN_{\phi}(\fa\fc,\fa,\fd,TN(\fd)N(\fa)^{e})$. We do so by applying Lemma~\ref{4.6} to $\cN_{\phi}(\fa\fc,\fa,\fd,TN(\fd)N(\fa)^{e})$, and decomposing $\fc$ as above. Note that by doing so, we have $\mu_{\K}(\fc_{0}\fc_{1})=\mu_{\K}(\fc_{0})\mu_{\K}(\fc_{1})$, since $\fc_{0}$ and $\fc_{1}$ are coprime by construction. We have
		\begin{multline*}
			\sum_{\fc\subseteq\cO_{\K}}\mu_{\K}(\fc)\cN_{\phi}(\fa\fc,\fa,\fd,TN(\fd)N(\fa)^{e})\\\sim\sum_{\substack{\fc_{1}\subseteq\cO_{\K} \\ \text{product} \\ \text{of }\fp\in \cS}} \dfrac{\mu_{\K}(\fc_{1})\#\fS_{\fc_{1}}^{\fa,\fd}}{(\cO_{\K}^{m}:\fm^{\fa}_{\fc_{1}})\#\mu(\K)}(TN(\fd)N(\fa)^{e})^{\frac{|w|}{e}}\dfrac{2^{mr_{2}}}{D_{\K}^{m/2}}\vol_{Nm}(\fB_{\phi}(1))\\\times\sum_{\substack{\fc_{0}\subseteq\cO_{\K} \\ \text{product} \\ \text{of }\fp\notin \cS}}\dfrac{\mu_{\K}(\fc_{0})}{N(\fc_{0})^{|w|}}.
		\end{multline*}
		We rewrite the latter sum in the above equation in the following way (recall that $\fc$ is square-free, and therefore so is $\fc_{0}$).
		\begin{equation*}
			\begin{split}
				\sum_{\substack{\fc_{0}\subseteq\cO_{\K} \\ \text{product} \\ \text{of }\fp\notin \cS}}\dfrac{\mu_{\K}(\fc_{0})}{N(\fc_{0})^{|w|}}&=\prod_{\fp\notin \cS}\sum_{k=0}^{1}\dfrac{\mu_{\K}(\fp^{k})}{N(\fp)^{k|w|}}=\prod_{\fp\notin \cS}(1-N(\fp)^{-|w|})\\&=\dfrac{\prod_{\fp\in \cS}(1-N(\fp)^{-|w|})^{-1}}{\prod_{\fp}(1-N(\fp)^{-|w|})^{-1}}=\dfrac{1}{\zeta_{\K}(|w|)}\prod_{\fp\in \cS}(1-N(\fp)^{-|w|})^{-1}.
			\end{split}
		\end{equation*}
		Substituting this into equation~\eqref{equation} and rearranging some of the factors, we get the desired result.
		
		The error term is estimated using \cite{Sc}, where $B=T^{1/e}$, $\fa=\fc_{0}$ and $t=|w|-1/N$ (since there are only finitely many contributions of $\fa$ and $\fd$, we may group these constants together).
	\end{proof}
	
	\begin{remark}
		The expression for $N_{\phi}(T)$ is indeed well defined, since the constant $C_{\phi}^{\K}$ is independent of the choice of the representative $\fa$ in its ideal class. To see this, let $\fb\in\cO_{\K}$ be a principal ideal and note that
		\begin{enumerate}
			\item $N(\fa\fb)^{|w|}=N(\fa)^{|w|}N(\fb)^{|w|}=N(\fa)^{|w|}(\cO_{\K}^{m}:\fb^{w})$;
			\item $(\cO_{\K}^{m}:\fm_{\fc_{1}}^{\fa\fb})=(\cO_{\K}^{m}:\fm^{\fa}_{\fc_{1}})(\cO_{\K}^{m}:\fb^{w})$, since by definition of $\fm^{\fa}$ we have  $\fm^{\fa\fb}=\fm^{\fa}\times \fb^{w}$ (recall by Lemma~\ref{congruenceconditions} that $\fm_{\fp}^{\fa}=\fp^{s_{1}}\fa_{\fp}^{w_{1}}\times\cdots\times\fp^{s_{m}}\fa_{\fp}^{w_{m}}$) and $(\fa\fb\fc_{1})^{w}$ is $(\fa\fc_{1})^{w}\fb^{w}$;
			\item $\#\fS_{\fc_{1}}^{\fa\fb,\fd}=\#\fS_{\fc_{1}}^{\fa,\fd}$, since $V(\fa\fb,\fd)=V^{\fa,\fd}\fb^{w}$ by definition,
		\end{enumerate} 
		which implies that $N(\fa\fb)^{|w|}\frac{\#\fS_{\fc_{1}}^{\fa\fb,\fd}}{(\cO_{\K}^{m}:\fm_{\fc_{1}}^{\fa\fb})}=N(\fa)^{|w|}\frac{\#\fS_{\fc_{1}}^{\fa,\fd}}{(\cO_{\K}^{m}:\fm^{\fa}_{\fc_{1}})}$.
	\end{remark}
	
	It is to be expected that we get the same asymptotic formula as in \cite[Theorem (A)]{De} except for the factor $C_{\phi}^{\K}\vol_{Nm}(\fB_{\phi}(1))$ built up from all these finite contributions from the different $\fa,\fd,\fc_{1}$, which stem from the choice of $\K$ and $\phi$. This factor replaces the volume in \cite[Theorem (A)]{De} and is independent of the choice of the polynomials representing the morphism. Note that we would also have a factor $h$ (denoting the class number of $\K$) in our result, if the summand in $C_{\phi}^{\K}$ were independent of the choice of $\fa\in R_{\K}$.
	
	\section{Examples}
        \label{sec4}
	
	In this section, we give two explicit examples of morphisms between weighted projective lines $\phi:\P(w)\to\P(u)$ and we determine the asymptotic behaviour of the corresponding counting function $N_{\phi}(T)=\widehat{\#}\{x\in\P(w)\mid S_{u}(\phi(x))\leq T\}$ as $T\to\infty$, using the method from Section~\ref{sec3}. For a list of other modular curves for which our method could be applied, see \cite[Table~1]{BrNa}.
	
	\subsection{Elliptic curves with a point of order 2}
	
	In Remark~\ref{elliptic}, we will show that this example allows us to count elliptic curves over $\Q$ with a point of order~2 with respect to a suitable height function.
	
	\subsubsection{The morphism $\phi$}
	
	We define the following map between weighted projective lines:
	\begin{equation}
		\label{phi}
		\begin{aligned}
			\phi: \P(2,4) &\longrightarrow \P(4,6)\\
			(a,b) &\longmapsto (a^2-2b,\, a(3b-a^2)).
		\end{aligned}
	\end{equation}
	By Lemma~\ref{morphism}, the map $\phi$ is a morphism with $e=1$. To check the second condition of Lemma~\ref{morphism}, note that the radical of the ideal $(a^2-2b,\, a(3b-a^2))$ of $\Q[a,b]$ equals $(a,b)$, as is straightforward to verify.
	We can therefore apply the techniques of Section~\ref{sec3} to the map $\phi$, with $m=2$, $w=(2,4)$, $u=(4,6)$ and $\K=\Q$.
	
	\begin{remark}
		\label{elliptic}
		The set $\P(2,4)(\Q)-\{(a,0),(a,3a^2/8)\mid a\in\Q^\times\}$ parametrizes isomorphism classes of elliptic curves over~$\Q$ with a point of order 2. To see this, let $E$ be an elliptic curve over $\Q$ and $P\in E(\Q)$ a point of order 2. We choose a Weierstrass model for~$E$ of the form $y^{2}=x^{3}+(a/4)x^{2}+(b/24)x$ with $P$ having coordinates $(0,0)$. We note that $(E,P)$ determines $(a,b)$ up to multiplication by $\lambda^{2}$ and $\lambda^{4}$ for $\lambda\in\Q^{\times}$, respectively, corresponding to the change of variables $(x,y)=(\lambda^{2}x',\lambda^{3}y')$. For $(E,P)$ as above, the real number $S(E,P):=S_{(2,4)}(a,b)$ is therefore independent of the choice of model. We exclude the points of the form $(a,0)$ and $(a,3a^2/8)$ because the curves defined by the corresponding Weierstrass equations are singular.
		The coordinates of $\phi(a,b)$ correspond to the usual $c_{4}$- and $c_{6}$-invariants of the elliptic curve $E$. Furthermore, in terms of the usual coefficients $a_2$, $a_4$, $b_2$ and $b_4$, we have $a=4a_2=b_2$ and $b=24a_4=12b_4$.

		By definition, we have
                \[
                N_{\phi}(T) = \widehat{\#}\{(a,b)\in\P(2,4)(\Q)\mid S_{(4,6)}(\phi(a,b))\leq T\}.
                \]
                The weighted projective line $\P(2,4)$ has no subvariety of accumulation points with respect to the height function (see \cite[Proposition 6.3]{De} for the definition and the argument, both of which generalize to our situation). Therefore the asymptotic behaviour of $N_{\phi}(T)$ does not change when points of the form $(a,0)$ and $(a,3a^2/8)$ with $a\in\Q^\times$ are omitted. We conclude that the asymptotic behaviour of $N_{\phi}(T)$ is the same as that of the number of isomorphism classes of pairs $(E,P)$ consisting of an elliptic curve over~$\Q$ and a point of order~2 such that $S(E,P)\leq T$.

                Finally, for readers familiar with stacks, we note that by a generalization of the above arguments, the moduli stack $Y_1(2)$ of elliptic curves with a point of order~$2$ can be identified with the open substack of $\P(2,4)$ obtained by omitting points of the form $(a,0)$ or $(a,3a^2/8)$.  More generally, one can show that the compactified moduli stack $X_1(2)$ can be identified with $\P(2,4)$ itself.
	\end{remark}
	
	\subsubsection{The constant $C_{\phi}^{\Q}$}
	
	We will ultimately apply Theorem~\ref{maintheorem} to find an asymptotic formula for $N_{\phi}(T)$. To do so, we first compute the value of $C_{\phi}^{\Q}$. Recall that 
	\begin{equation*}
		C_{\phi}^{\K}:=\sum_{\fa\in R_{\K}}\sum_{\fd\in\cD_{\phi}}\sum_{\substack{\fc_{1}\subseteq\cO_{\K} \\ \text{product} \\ \text{of }\fp\in \cS}}(N(\fd)^{1/e}N(\fa))^{|w|} \dfrac{\mu_{\K}(\fc_{1})\#\fS_{\fc_{1}}^{\fa,\fd}}{(\cO_{\K}^{m}:\fm^{\fa}_{\fc_{1}})}\prod_{\fp\in \cS}(1-N(\fp)^{-|w|})^{-1}.
	\end{equation*}
	
	The group $\Cl(\Q)$ is trivial, and we take $\fa=\Z$ as a representative of the trivial element $c\in \Cl(\Q)$.
	
	\begin{lemma}
		\label{1236}
		Let $(a,b)\in\P(2,4)$. We denote
		\begin{equation*}
			(^*):=\{(a,b)\in\P(2,4): 2^{3}\mid a\text{ and } 2^{3} \mid b \text{ or } 2^{2} \mid a \text{ and } 2^{4} \mid b\}.
		\end{equation*}
		Then,
		\begin{equation*}
			\fI_{u}(\phi(a,b))\cdot\fI_{w}(a,b)^{-1}=
			\begin{cases}
				(2) ,\text{ when } (a,b)\in(^*);\ \\
				(1) ,\text{ else}.\ 
			\end{cases}
		\end{equation*}
	\end{lemma}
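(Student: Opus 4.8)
The plan is to compute the discrepancy ideal $\fI_u(\phi(a,b))\cdot\fI_w(a,b)^{-1}$ purely locally, one prime $p$ at a time, using Lemma~\ref{scalingidealother}~(2). Since $e=1$, the discrepancy ideal is the product over all $p$ of $p^{\delta_p}$ with $\delta_p=\epsilon_{u,p}(\phi(a,b))-\epsilon_{w,p}(a,b)$, so it suffices to show $\delta_p=0$ for all odd $p$, $\delta_2=1$ precisely when $(a,b)\in(^*)$, and $\delta_2=0$ otherwise. By homogeneity of $\phi$ (valid to apply since the discrepancy is $\K^\times$-invariant), we may also normalize the representative $(a,b)$ of a point of $\P(2,4)(\Q)$; concretely, I would scale so that $\fI_w(a,b)=(1)$, i.e.\ $\min\{\lfloor \ord_p(a)/2\rfloor,\lfloor \ord_p(b)/4\rfloor\}=0$ for every $p$. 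With this normalization the claim reduces to showing $\fI_u(\phi(a,b))=(1)$ off the stated set, and $\fI_u(\phi(a,b))=(2)$ on it.

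The first, easy step handles odd primes. For $p\neq 2$ the coefficients $1,-2,1,3,-1$ appearing in $a^2-2b$ and $a(3b-a^2)$ are $p$-adic units, so a direct inspection of $\ord_p(a^2-2b)$ and $\ord_p(a(3b-a^2))$ against $\lfloor\cdot/4\rfloor$ and $\lfloor\cdot/6\rfloor$, combined with the normalization $\min\{\lfloor\ord_p a/2\rfloor,\lfloor\ord_p b/4\rfloor\}=0$, gives $\epsilon_{u,p}(\phi(a,b))=0$. (One checks the three cases $\ord_p a=0$; $\ord_p a\ge1,\ \ord_p b\le3$; $\ord_p a=0$ forced otherwise — in each the valuation of at least one of the two coordinates of $\phi$ is too small to contribute a positive power of $p$.) This is routine and I would dispatch it quickly.

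The second, main step is the $2$-adic analysis, and this is where the real work lies. Here the coefficients $2$ and $3b-a^2$ genuinely interact with the valuation $2$, and the floor functions $\lfloor x/4\rfloor$ and $\lfloor x/6\rfloor$ for $\phi$ versus $\lfloor x/2\rfloor,\lfloor x/4\rfloor$ for $(a,b)$ must be tracked carefully. I would stratify by $\ord_2(a)$ and $\ord_2(b)$. After normalizing $\min\{\lfloor\ord_2 a/2\rfloor,\lfloor\ord_2 b/4\rfloor\}=0$, the relevant low-valuation cases are $\ord_2(a)\in\{0,1\}$ or $\ord_2(b)\in\{0,1,2,3\}$, but one can scale further within a fixed point of $\P(2,4)(\Q)$: the subtlety is that the weight-$(2,4)$ action only lets us change $(\ord_2 a,\ord_2 b)$ by $(2k,4k)$, so genuinely distinct residue patterns survive. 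Writing $a=2^\alpha a_0$, $b=2^\beta b_0$ with $a_0,b_0$ odd, I would compute $\ord_2(a^2-2b)$ and $\ord_2(a(3b-a^2))$ in each pattern — paying attention to possible cancellation in $a^2-2b$ when $2\alpha=\beta+1$ and in $3b-a^2$ when $\beta=2\alpha$ — then feed these into $\epsilon_{u,2}=\min\{\lfloor\ord_2(a^2-2b)/4\rfloor,\lfloor\ord_2(a(3b-a^2))/6\rfloor\}$. The condition $(^*)$, namely ($2^3\mid a$ and $2^3\mid b$) or ($2^2\mid a$ and $2^4\mid b$), should emerge as exactly the set of normalized representatives for which both floors are $\ge1$; everywhere else at least one is $0$. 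The main obstacle is bookkeeping: ensuring the case division over $(\ord_2 a,\ord_2 b)$ is exhaustive up to the weight action, and correctly handling the cancellation cases so that no spurious extra factor of $2$ appears or disappears. Once the local computation is complete at every prime, multiplying over $p$ and undoing the normalization (which only multiplies through by a principal ideal and hence does not affect the discrepancy, since $\delta_\phi$ is well defined on $\P(w)(\K)$) yields the stated formula.
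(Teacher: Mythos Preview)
Your approach is correct and essentially identical to the paper's: both normalize to a primitive representative (so that $\fI_w(a,b)=(1)$) and then check prime by prime that only $p=2$ can contribute, with the condition $(^*)$ emerging from the $2$-adic case analysis; your explicit stratification by $(\ord_2 a,\ord_2 b)$ and attention to the cancellation cases $2\alpha=\beta+1$ and $\beta=2\alpha$ is exactly the bookkeeping the paper's terser argument is implicitly relying on. The one point your outline does not spell out is that on $(^*)$ one must check $\delta_2=1$ rather than merely $\delta_2\ge 1$, but this falls out immediately once you note that for a primitive representative in $(^*)$ one has $\ord_2(a^2-2b)=4$ exactly, so $\lfloor 4/4\rfloor=1$ pins $\epsilon_{u,2}$ down.
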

	
	\begin{proof}
		Without loss of generality we can take  $\fI_{(2,4)}(a,b)$ to be $(1)$ by multiplying the point $(a,b)$ by $p^{-1}$ (for $p$ a prime number) with the weight $(2,4)$ action if necessary (we say that $(a,b)$ is \textit{primitive}). We now write $\phi(a,b)=(\phi(a,b)_{1},\phi(a,b)_{2})$. We want to find primes $p$ such that $p^{4}\mid\phi(a,b)_{1}=a^{2}-2b$ and $p^{6}\mid\phi(a,b)_{2}=3ab-a^{3}$. We check prime by prime.
		\begin{enumerate}
			\item Let $p=2$. Then, $2^{4}\mid a^{2}-2b\iff 2^{2}\mid a$ and $2^{3}\mid b$. On the other hand, $2^{6}\mid 3ab-a^{3}\iff 2^{2}\mid a$ and $2^{6}\mid ab$. Thus, for $x=(a,b)$ such that either $2^{3}\mid a$ and $2^{3}\mid b$ or $2^{2}\mid a$ and $2^{4}\mid b$, we have $\fI_{(4,6)}(\phi(x))\subseteq(2)$.
			\item Let $p\geq 3$. Then, $p^{4}\mid a^{2}-2b\iff p^{2}\mid a$ and $p^{4}\mid b$.
		\end{enumerate}
		Recall that for the points $x\in\P(2,4)$ for which there does not exist a prime $p$ satisfying $p^{4}\mid \phi(x)_{1}$ and $p^{6}\mid \phi(x)_{2}$, we have $\fI_{(4,6)}(\phi(x))=(1)$. The result follows by primitivity of the point $(a,b)$.
	\end{proof}
	
	\begin{corollary}
		\label{1236S}
		We have $\cD_{\phi}=\{(1),(2)\}$ and $\cS=\{(2)\}$.
	\end{corollary}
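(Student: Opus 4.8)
\emph{Proof proposal.}
The plan is to derive both claims directly from Lemma~\ref{1236} together with the structural results of Section~\ref{sec2}. Since $e=1$, the discrepancy ideal of a point $x\in\P(2,4)(\Q)$ is $\delta_{\phi}(x)=\fI_{(4,6)}(\phi(x))\cdot\fI_{(2,4)}(x)^{-1}$, which is precisely the ideal computed in Lemma~\ref{1236}; that lemma shows it is always $(1)$ or $(2)$, so $\cD_{\phi}\subseteq\{(1),(2)\}$. To see that both values occur I would exhibit one point realizing each: the point $(1,1)$ is primitive and does not lie in $(^*)$, so $\delta_{\phi}(1,1)=(1)$, while the point $(8,8)$ is primitive (its $(2,4)$-scaling ideal is trivial, since $\min\{\floor*{3/2},\floor*{3/4}\}=0$) and lies in $(^*)$, so $\delta_{\phi}(8,8)=(2)$. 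Hence $\cD_{\phi}=\{(1),(2)\}$.

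For the statement about $\cS$, I would first bound $\cS$ from above using the second assertion of Lemma~\ref{congruenceconditions}: for every prime $p$ with $\ord_{p}\fd=0$ for all $\fd\in\cD_{\phi}$ one may take $\fm_{p}^{\fa}=(\fa_{p})^{w}$, so no such prime lies in $\cS$. As $\cD_{\phi}=\{(1),(2)\}$, the only prime that can belong to $\cS$ is $p=2$, giving $\cS\subseteq\{(2)\}$.

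It then remains to show $(2)\in\cS$, i.e.\ that $\fm_{(2)}^{\fa}\neq(\fa_{(2)})^{w}=\Z_{2}\times\Z_{2}$. The key point is that the local discrepancy map $\delta_{\phi,(2)}$ is \emph{not} constant on $R_{(2)}^{\fa}$: by Lemma~\ref{1236}, on primitive local points $\delta_{\phi,(2)}([x])$ equals $1$ exactly when the coordinates satisfy the congruence condition $(^*)$ modulo $8$ and $16$, which is a genuinely nontrivial condition (for instance $(1,1)$ gives local discrepancy $0$ and $(8,8)$ gives local discrepancy $1$). Therefore $R_{(2)}^{\fa,(2)}$ is a proper nonempty subset of $R_{(2)}^{\fa}$, so it cannot be cut out from $R_{(2)}^{\fa}$ by congruence conditions modulo the trivial module $\Z_{2}\times\Z_{2}$ (which would force it to be empty or all of $R_{(2)}^{\fa}$). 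By the defining property of the local modulus this forces $\fm_{(2)}^{\fa}\subsetneq\Z_{2}\times\Z_{2}$, hence $(2)\in\cS$, and we conclude $\cS=\{(2)\}$.

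The main obstacle is this last step: one has to unwind carefully the definitions of $R_{\fp}^{\fa,\fd}$, the local discrepancy $\delta_{\phi,\fp}$, and the local modulus $\fm_{\fp}^{\fa}$ from Section~\ref{sec2} in order to confirm that non-constancy of the local discrepancy really does force a nontrivial local modulus. Once that implication is in hand, the two witnessing points are immediate from the already-proved computation in Lemma~\ref{1236}, and the rest is bookkeeping.
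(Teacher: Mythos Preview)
Your argument is correct and follows the same route as the paper's proof: both parts rest on Lemma~\ref{1236} for the discrepancy computation and on Lemma~\ref{congruenceconditions} for bounding~$\cS$. The paper's own proof is extremely terse (``obvious'' for $\cD_\phi$, and ``use Lemma~\ref{congruenceconditions} and the definition of~$\cS$'' for the rest), so your version is in fact more complete in two respects: you explicitly exhibit points realising each discrepancy value, and you supply the missing justification that $(2)\in\cS$, i.e.\ that no valid choice of local modulus at~$2$ can be trivial because $R_{(2)}^{\fa,(2)}$ is a proper nonempty subset of~$R_{(2)}^{\fa}$. The paper leaves the latter point implicit (it only becomes visible in the subsequent lemma, where $\fm_{(2)}^{\fa}$ is computed to be $8\Z_{(2)}\times 16\Z_{(2)}$), so your direct argument is a genuine clarification rather than a deviation.
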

	
	\begin{proof}
		Applying Lemma~\ref{1236}, the first statement is obvious. For the second claim, use Lemma~\ref{congruenceconditions} and the definition of the set $\cS$.
	\end{proof}
	
	\begin{lemma}
		We have $\fm^{\fa}=8\Z\times 16\Z$.
	\end{lemma}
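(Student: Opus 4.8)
The plan is to reduce everything to the prime $\fp=(2)$, describe the sets $R_2^{\fa,(1)}$ and $R_2^{\fa,(2)}$ explicitly from Lemma~\ref{1236}, and then read off the corresponding local moduli.

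\emph{Reduction to $\fp=(2)$.} By Corollary~\ref{1236S} we have $\cD_\phi=\{(1),(2)\}$ and $\cS=\{(2)\}$. For every prime $p\ne 2$ we then have $\ord_p\fd=0$ for both $\fd\in\cD_\phi$, so by the last assertion of Lemma~\ref{congruenceconditions} we may take $\fm_p^{\fa}=\Z_p^2$ (recall $\fa=\Z$, so $(\fa_p)^w=\Z_p\times\Z_p$). By Definition~\ref{mad} it therefore suffices to prove that the local modulus at $2$ is $\fm_2^{\fa}=8\Z_2\times 16\Z_2$, since then
\[
\fm^{\fa}=\{(x_1,x_2)\in\Q^2\mid x_1\in 8\Z,\ x_2\in 16\Z\}=8\Z\times 16\Z.
\]

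\emph{The sets $R_2^{\fa,\fd}$.} By Lemma~\ref{primitive}, $R_2^{\fa}=\Z_2^2\setminus(4\Z_2\times 16\Z_2)$ is the set of $(a,b)\in\Z_2^2$ that are \emph{not} simultaneously divisible by $2^2$ in $a$ and $2^4$ in $b$. Since $e=1$, for $x\in R_2^{\fa}$ we have $\delta_{\phi,2}([x])=\epsilon_{(4,6),2}(\phi(x))$, and by Lemma~\ref{1236} this equals $1$ exactly when $(a,b)\in(^*)$ and $0$ otherwise. Intersecting the two branches of $(^*)$ with the primitivity condition defining $R_2^{\fa}$: the branch $\{2^2\mid a,\ 2^4\mid b\}$ is disjoint from $R_2^{\fa}$, while the branch $\{2^3\mid a,\ 2^3\mid b\}$ together with primitivity forces $\ord_2 b=3$. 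Hence
\[
R_2^{\fa,(2)}=\{(a,b)\in\Z_2^2\mid \ord_2 a\ge 3,\ \ord_2 b=3\}=8\Z_2\times(8\Z_2\setminus 16\Z_2),
\qquad R_2^{\fa,(1)}=R_2^{\fa}\setminus R_2^{\fa,(2)}.
\]

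\emph{The moduli.} Taking in the proof of Lemma~\ref{congruenceconditions} a covering by maximal boxes, $\fm_2^{\fa,\fd}$ is the coarsest lattice of the form $\fm_{2,1}\times\fm_{2,2}$ for which $R_2^{\fa,\fd}$ is periodic as a subset of $\Z_2^2$. Now $R_2^{\fa,(2)}$ is $(\fm_{2,1}\times\fm_{2,2})$-periodic if and only if $\fm_{2,1}\subseteq 8\Z_2$ and $\fm_{2,2}\subseteq 16\Z_2$, because $8\Z_2=2^3\Z_2$ and $8\Z_2\setminus 16\Z_2=8+2^4\Z_2$ are single cosets of $2^3\Z_2$ and $2^4\Z_2$ respectively and of no coarser subgroup; hence $\fm_2^{\fa,(2)}=8\Z_2\times 16\Z_2$. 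On the other hand $R_2^{\fa}$ is $(4\Z_2\times 16\Z_2)$-periodic, hence $(8\Z_2\times 16\Z_2)$-periodic, and $R_2^{\fa,(2)}$ is $(8\Z_2\times 16\Z_2)$-periodic, so their difference $R_2^{\fa,(1)}$ is $(8\Z_2\times 16\Z_2)$-periodic as well; thus $\fm_2^{\fa,(1)}\supseteq 8\Z_2\times 16\Z_2$. Combining, $\fm_2^{\fa}=\fm_2^{\fa,(1)}\cap\fm_2^{\fa,(2)}=8\Z_2\times 16\Z_2$, which completes the proof.

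\emph{Main obstacle.} The delicate point is the interaction between the primitivity condition defining $R_\fp^{\fa}$ and the divisibility conditions of Lemma~\ref{1236}. The naive guess $R_2^{\fa,(2)}=\{2^3\mid a,\ 2^3\mid b\}=8\Z_2\times 8\Z_2$ is wrong: the pairs with $16\mid b$ (and $8\mid a$) are non-primitive and hence lie outside $R_2^{\fa}$, so $R_2^{\fa,(2)}$ is the proper subset $8\Z_2\times(8\Z_2\setminus 16\Z_2)$, and it is precisely this missing half-coset that forces a factor $16$, rather than $8$, in the second coordinate of $\fm^{\fa}$. One should also keep in mind that the operative notion of ``modulus'' here is periodicity of $R_\fp^{\fa,\fd}$ in all of $(\fa_\fp)^w$, not merely inside $R_\fp^{\fa}$, which is what the covering construction in Lemma~\ref{congruenceconditions} produces.
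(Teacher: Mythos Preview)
Your proof is correct and follows the same outline as the paper's: reduce to $\fp=(2)$ via Lemma~\ref{congruenceconditions} and Corollary~\ref{1236S}, then extract the local modulus at~$2$ from the divisibility conditions in Lemma~\ref{1236}. The paper's own proof is a one-line sketch that simply asserts $\fm_{(2)}^{\fa}=8\Z_{(2)}\times 16\Z_{(2)}$ ``from Lemma~\ref{1236}''; you have supplied the verification the paper omits, namely the explicit description $R_2^{\fa,(2)}=8\Z_2\times(8\Z_2\setminus 16\Z_2)$ and the check that both $R_2^{\fa,(1)}$ and $R_2^{\fa,(2)}$ are $(8\Z_2\times 16\Z_2)$-periodic. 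Your ``main obstacle'' paragraph and the optimality argument (that no coarser product lattice works for $\fd=(2)$) go beyond what the paper records, but they are correct and clarify why the choice is natural. One cosmetic point: the modulus $\fm_\fp^{\fa,\fd}$ as constructed in Lemma~\ref{congruenceconditions} is not canonical, so strictly speaking the lemma asserts a choice rather than a computation; your optimality argument shows this is the coarsest admissible choice, which is implicitly what the paper intends.
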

	
	\begin{proof}
		By definition, $\fm^{\fa}=\bigcap_{\fp}(\fm_{\fp}^{\fa}\cap\Q^{2})$, where the intersection takes place in $\Q_{\fp}^{2}$. It follows from Lemma~\ref{congruenceconditions} that $\fm_{\fp}^{\fa}=\Z_{\fp}^{2}$ for $\fp\neq (2)$. 
		In order to determine $\fm_{(2)}^{\fa}$, we need to look at the local congruence conditions that $\fd$ being a given value is determined by. Lemma~\ref{1236} then gives us that
		\begin{equation*}
			\fm_{\fp}^{\fa}=
			\begin{cases}
				2^{3}\Z_{(2)}\times 2^{4}\Z _{(2)}=8\Z _{(2)}\times 16\Z _{(2)}, \text{ for } \fp=(2);\ \\
				\Z_{\fp}^{2} ,\text{ for } \fp\neq(2).
			\end{cases}
		\end{equation*}
		From this the claim follows.
	\end{proof}
	
	\begin{corollary}
		As a consequence, 
		\begin{equation*}
			\fm^{\fa}_{\fc_{1}}=8\Z\times 16\Z.
		\end{equation*}
	\end{corollary}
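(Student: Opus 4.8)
The plan is to unwind the definition $\fm^{\fa}_{\fc_{1}}:=\fm^{\fa}\cap(\fa\fc_{1})^{w}$ and feed in the data already assembled: $\K=\Q$, $\fa=\Z$, $w=(2,4)$, the fact that $\cS=\{(2)\}$ from Corollary~\ref{1236S}, and the value $\fm^{\fa}=8\Z\times16\Z$ from the preceding lemma.

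First I would note that, by construction, $\fc_{1}$ is a square-free product of primes lying in $\cS=\{(2)\}$, so there are only two possibilities, $\fc_{1}=\Z$ and $\fc_{1}=(2)$, and it suffices to verify the claimed equality in each. For $\fc_{1}=\Z$ we have $(\fa\fc_{1})^{w}=\Z\times\Z$, whence $\fm^{\fa}_{\fc_{1}}=\fm^{\fa}\cap(\Z\times\Z)=\fm^{\fa}=8\Z\times16\Z$. For $\fc_{1}=(2)$ we compute $(\fa\fc_{1})^{w}=(2\Z)^{2}\times(2\Z)^{4}=4\Z\times16\Z$; since $8\Z\subseteq4\Z$, the lattice $8\Z\times16\Z$ is already contained in $4\Z\times16\Z$, so $\fm^{\fa}_{\fc_{1}}=(8\Z\times16\Z)\cap(4\Z\times16\Z)=8\Z\times16\Z$ once more. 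In both cases the common value is $8\Z\times16\Z$, which is the assertion.

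There is no real obstacle here: the statement is a direct intersection of two explicit sublattices of $\Z^{2}$. The only points worth tracking are that $\fc_{1}$ ranges over just the two ideals $\Z$ and $(2)$, and that in each case $\fm^{\fa}$ is already contained in $(\fa\fc_{1})^{w}$, so intersecting does not shrink it.
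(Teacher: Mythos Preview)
Your proof is correct and follows exactly the intended approach: the paper states this corollary without proof, as an immediate consequence of the preceding lemma via the definition $\fm^{\fa}_{\fc_{1}}=\fm^{\fa}\cap(\fa\fc_{1})^{w}$, and your argument simply unwinds this in the two cases $\fc_{1}=\Z$ and $\fc_{1}=(2)$.
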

	
	We now describe a method to compute the value of $\#\fS_{\fc_{1}}^{\fa,\fd}$. We do so by studying the case $\fc_{1}=(2)$. The case $\fc_{1}=(1)$ can be treated similarly.
	
	First, note that $S_{\fp}^{\fa,\fd}$ is trivial when $\fp\neq (2)$ by Remark~\ref{rpkpjprpkp}. For $\fp=(2)$, we have the following
	\begin{equation*}
		S_{(2)}^{\Z,\fd}\subseteq (\Z_{(2)}/8\Z_{(2)})\times(\Z_{(2)}/16\Z_{(2)}), \text{ where } |S_{(2)}^{\Z,(2)}|=3.
	\end{equation*}
	This is easy to see by, for instance, writing a matrix with the different values of $(a,b)$ modulo $\fm_{(2)}^{\fa}$, where in the position corresponding to $(a,b)$ we get the corresponding value for the local discrepancy $j_{(2)}$, which can either be 0 or 1. We get the following matrix:
	\[
	\begin{pmatrix}
			1&0&0&0&0&0&0&0&1&0&0&0&0&0&0&0\\
			0&0&0&0&0&0&0&0&0&0&0&0&0&0&0&0\\
			0&0&0&0&0&0&0&0&0&0&0&0&0&0&0&0\\
			0&0&0&0&0&0&0&0&0&0&0&0&0&0&0&0\\
			1&0&0&0&0&0&0&0&0&0&0&0&0&0&0&0\\
			0&0&0&0&0&0&0&0&0&0&0&0&0&0&0&0\\
			0&0&0&0&0&0&0&0&0&0&0&0&0&0&0&0\\
			0&0&0&0&0&0&0&0&0&0&0&0&0&0&0&0\\
	\end{pmatrix}.
	\]
	We first look for points $(a,b)\in 2^{2}\Z\times 2^{4}\Z$ with non-zero local discrepancy. We see that we only get $j_{(2)}=1$ for the positions $(a,b)$ in $\{(0,0), (4,0)\}$. On the other hand, we notice that by Lemma~\ref{1236} there is no point $(a,b)$ in $2^{2}\Z\times 2^{4}\Z$ with $j_{(2)}=0$.
	
	Hence, we get
	\begin{equation*}
		\#\fS_{(2)}^{\fa,\fd}=
		\begin{cases}
			0, \text{ for } \fd=(1);\ \\
			2, \text{ for } \fd=(2).
		\end{cases}
	\end{equation*}
	
	See Table 1 for an overview of the elements appearing in the expression for $C_{\phi}^{\Q}$. 
	Using a computer algebra system, we conclude the following.
	
	\begin{lemma}
		\label{ck}
		Notations as above. We have
		\begin{equation*}
			C_{\phi}^{\Q}=3/2.
		\end{equation*}
	\end{lemma}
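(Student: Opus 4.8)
The plan is to evaluate the closed expression for $C_{\phi}^{\Q}$ from Theorem~\ref{maintheorem} directly: once the ingredients produced in the preceding lemmas are substituted, $C_{\phi}^{\Q}$ is a finite sum that can be computed by hand (and confirmed by a computer algebra system). Since $\Cl(\Q)$ is trivial, the outer sum has the single term $\fa=\Z$ with $N(\fa)=1$; moreover $e=1$, $m=2$ and $|w|=2+4=6$. By Corollary~\ref{1236S} we have $\cD_{\phi}=\{(1),(2)\}$ and $\cS=\{(2)\}$, so the sum over $\fd$ has the two terms $\fd=(1)$ and $\fd=(2)$ (with $N((1))=1$ and $N((2))=2$), while the sum over $\fc_{1}$ runs over the products of primes in $\cS$, i.e. $\fc_{1}\in\{(1),(2)\}$, with $\mu_{\Q}((1))=1$ and $\mu_{\Q}((2))=-1$. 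From the computation of the global modulus we have $\fm^{\fa}_{\fc_{1}}=8\Z\times16\Z$ for both values of $\fc_{1}$, so $(\Z^{2}:\fm^{\fa}_{\fc_{1}})=8\cdot16=128$, and the Euler factor is $\prod_{\fp\in\cS}(1-N(\fp)^{-|w|})^{-1}=(1-2^{-6})^{-1}=64/63$.

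Next I would assemble the four numbers $\#\fS_{\fc_{1}}^{\fa,\fd}$. Two of them were found above, namely $\#\fS_{(2)}^{\Z,(1)}=0$ and $\#\fS_{(2)}^{\Z,(2)}=2$. For $\fc_{1}=(1)$ one has $\fm^{\fa}_{(1)}=\fm^{\fa}$ and $(\fa\cdot(1))^{w}=\fa^{w}$, so $\fS_{(1)}^{\Z,\fd}$ coincides with $S_{(2)}^{\Z,\fd}$; in particular $\#\fS_{(1)}^{\Z,(2)}=\#S_{(2)}^{\Z,(2)}=3$. Finally, since the local discrepancy at $2$ takes only the values $0$ and $1$ (equivalently $\cD_{\phi}=\{(1),(2)\}$), the sets $S_{(2)}^{\Z,(1)}$ and $S_{(2)}^{\Z,(2)}$ partition $(\Z_{(2)}/8\Z_{(2)})\times(\Z_{(2)}/16\Z_{(2)})$, whence $\#\fS_{(1)}^{\Z,(1)}=128-3=125$. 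All these values are collected in Table~1.

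Substituting into the formula for $C_{\phi}^{\Q}$ then gives
\[
C_{\phi}^{\Q}=\frac{64}{63}\cdot\frac{1}{128}\sum_{\fd\in\{(1),(2)\}}N(\fd)^{6}\sum_{\fc_{1}\in\{(1),(2)\}}\mu_{\Q}(\fc_{1})\,\#\fS_{\fc_{1}}^{\Z,\fd}
=\frac{64}{63}\cdot\frac{1\cdot(125-0)+64\cdot(3-2)}{128}
=\frac{64}{63}\cdot\frac{189}{128}
=\frac{3}{2}.
\]
There is no genuine obstacle here: all the real work sits in the earlier lemmas, and the present statement is a finite evaluation. The only step needing a little care is the count $\#\fS_{(1)}^{\Z,(1)}=125$, which rests on the fact that $\cD_{\phi}=\{(1),(2)\}$, so that the residue classes not lying in $S_{(2)}^{\Z,(2)}$ account for the term $\fd=(1)$; the remainder is the bookkeeping above together with the simplification $189/126=3/2$.
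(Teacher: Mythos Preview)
Your computation is correct and proceeds exactly as the paper does: substitute the entries of Table~1 into the formula for $C_\phi^{\Q}$ and simplify (the paper's own proof is just the sentence ``Using a computer algebra system, we conclude the following''), and your explicit arithmetic $\tfrac{64}{63}\cdot\tfrac{189}{128}=\tfrac{189}{126}=\tfrac{3}{2}$ is fine. One small remark: the partition claim you invoke to recover $\#\fS_{(1)}^{\Z,(1)}=125$ is a feature of the particular choice of the sets $S_{(2)}^{\Z,\fd}$ made in the paper (visible in the displayed $8\times16$ matrix of discrepancy values) rather than a formal consequence of $\cD_\phi=\{(1),(2)\}$ alone, but with that choice---which is exactly what Table~1 records---your argument goes through unchanged.
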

	\begin{table}[H]
		\centering
		\begin{center}
			\label{tab:table}
			\begin{tabular}{cc}
				\toprule 
				\textbf{Factors in $C_{\phi}^{\K}$} & \textbf{Value for $\phi$}\\
				\midrule 
				$\vphantom{\Big|}R_{\K}$ & $\{\Z\}$\\
				\cline{1-2}
				$\vphantom{\Big|}\cD_{\phi}$ & $\{(1),(2)\}$ \\
				\cline{1-2}
				\vphantom{\Big|}$\cS$ & $\{(2)\}$\\
				\cline{1-2}
				\vphantom{\Big|}$(N(\fd)^{1/e}N(\fa))^{|w|}$ & 1, for $\fd=(1)$ \\
				& 64, for $\fd=(2)$\\
				\cline{1-2}
				\vphantom{\Big|}$\mu_{\K}(\fc_{1})$ & 1, for $\fc_{1}=(1)$ \\
				& $-1$, for $\fc_{1}=(2)$ \\
				\cline{1-2}
				\vphantom{\Big|}$\#\fS_{\fc_{1}}^{\fa,\fd}$ & 125, for $\fd=(1),\fc_{1}=(1)$ \\
				& 0, for $\fd=(1),\fc_{1}=(2)$ \\
				& 3, for $\fd=(2),\fc_{1}=(1)$ \\
				& 2, for $\fd=(2),\fc_{1}=(2)$ \\
				\cline{1-2}
				\vphantom{\Big|}$(\cO_{\K}^{m}:\fm^{\fa}_{\fc_{1}})$ & 128 \\
				\cline{1-2}
				\vphantom{\Big|}$\prod_{\fp\in\cS}(1-N(\fp)^{-|w|})^{-1}$ & 64/63 \\ 
				\bottomrule 
			\end{tabular}
			\caption{\textit{Values of the different factors appearing in the expression for $C_{\phi}^{\K}$ applied to $\phi$ in equation~\eqref{phi}.}}
		\end{center}
	\end{table}
	
	\subsubsection{The asymptotic formula for $N_{\phi}(T)$}
	
        Recall that $\fB_{\phi}(1)=\fD_{\phi}(1)\cap\Delta$, where
        \begin{equation*}
                \fD_{\phi}(1)=\{(a,b)\in\R^{2}-\{0\}\mid \max\{|a^{2}-2b|^{1/4},|a(3b-a^{2})|^{1/6}\}\leq 1\}
        \end{equation*}
        and $\Delta=({\pr}\circ\eta)^{-1}F$ (see Subsection~\ref{lattice-point-problem}).
        The region $\fD_{\phi}(1)$ is shown in Figure~\ref{figure1}.
        \begin{figure}
        \begin{center}
        \includegraphics[scale=0.8]{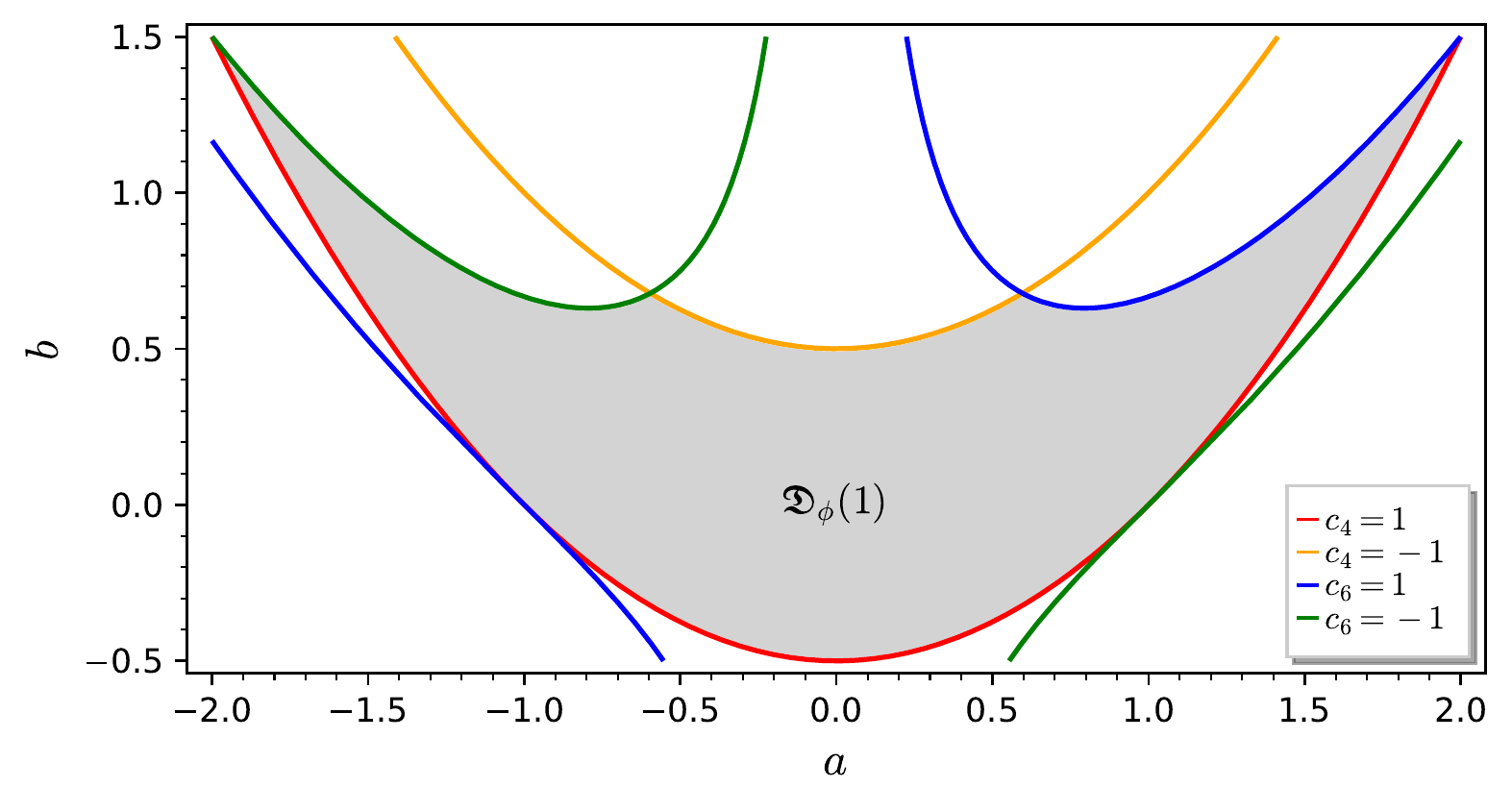}
        \end{center}
        \caption{The region $\fD_{\phi}(1)$ for $X_1(2)$}
        \label{figure1}
        \end{figure}

	\begin{lemma}
		The volume of the region $\fB_{\phi}(1)$ is
		\begin{equation*}
			\vol_{2}(\fB_{\phi}(1))=\dfrac{4+2\log 2+2\alpha-2\log\alpha}{3} = 2.53774\ldots,
		\end{equation*}
		where $\alpha=0.59607\ldots$ is the unique real root of the polynomial $x^{3}+3x-2$.
	\end{lemma}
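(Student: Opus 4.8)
The plan is to first observe that for $\K=\Q$ the set $\Delta$ is all of $\R^2-\{0\}$, so that $\fB_\phi(1)=\fD_\phi(1)$, and then to evaluate the resulting planar integral directly by slicing in the $b$-direction. Concretely, I would begin by noting that $\Q$ has a single archimedean place, so $r_1+r_2-1=0$, the unit lattice $\Gamma$ is trivial, the hyperplane $\cH$ is $\{0\}$, the fundamental domain $F$ is $\{0\}$, and the map $\pr$ is identically $0$; hence $\Delta=(\pr\circ\eta)^{-1}F=\R^2-\{0\}$ and $\fB_\phi(1)=\fD_\phi(1)$. Unwinding Notation~\ref{dt} and using $|a(3b-a^2)|=|a^3-3ab|$, the region $\fD_\phi(1)$ is
\[
R=\{(a,b)\in\R^2-\{0\}\mid |a^2-2b|\le1\text{ and }|a^3-3ab|\le1\}.
\]

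Since $R$ is invariant under $(a,b)\mapsto(-a,b)$, it suffices to compute $2\int_0^\infty\ell(a)\,da$, where $\ell(a)$ is the length of the slice $\{b\mid(a,b)\in R\}$. For $a>0$ this slice is the intersection of the two intervals
\[
I_1(a)=\Bigl[\tfrac{a^2-1}{2},\tfrac{a^2+1}{2}\Bigr],\qquad
I_2(a)=\Bigl[\tfrac{a^2}{3}-\tfrac{1}{3a},\tfrac{a^2}{3}+\tfrac{1}{3a}\Bigr],
\]
of lengths $1$ and $\tfrac{2}{3a}$ respectively. The heart of the computation is to determine which endpoints are active, and this reduces to three cubic inequalities that factor pleasantly: $a^3-3a+2=(a-1)^2(a+2)\ge0$ shows the left endpoint of $I_2$ never exceeds that of $I_1$; $a^3+3a-2$ is strictly increasing with unique real root $\alpha$, so $I_1(a)\subseteq I_2(a)$ exactly when $0<a\le\alpha$; and $a^3-3a-2=(a+1)^2(a-2)$ shows $I_1(a)\cap I_2(a)=\emptyset$ for $a\ge2$. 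Consequently $\ell(a)=1$ on $(0,\alpha]$, $\ell(a)=\tfrac12+\tfrac{1}{3a}-\tfrac{a^2}{6}$ on $(\alpha,2)$ (where the overlap is $[\tfrac{a^2-1}{2},\tfrac{a^2}{3}+\tfrac{1}{3a}]$), and $\ell(a)=0$ on $[2,\infty)$.

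It then remains to integrate. Using $\alpha^3=2-3\alpha$ to evaluate $\int_\alpha^2\tfrac{a^2}{6}\,da=\tfrac{8-\alpha^3}{18}=\tfrac13+\tfrac{\alpha}{6}$, one gets $\int_0^\infty\ell(a)\,da=\alpha+(1-\tfrac\alpha2)+\tfrac{\log2-\log\alpha}{3}-(\tfrac13+\tfrac\alpha6)=\tfrac{\alpha+2+\log2-\log\alpha}{3}$, and doubling gives $\vol_2(\fB_\phi(1))=\tfrac{4+2\log2+2\alpha-2\log\alpha}{3}$; the numerical value then follows from $\alpha=0.59607\ldots$. The only mildly delicate point is the case analysis pinning down the active endpoints of $I_1\cap I_2$ on the three subintervals, but once the above factorizations are recorded this is entirely routine, and it is precisely the transition between $I_1\subseteq I_2$ and a proper overlap that forces $\alpha$ to be the real root of $x^3+3x-2$.
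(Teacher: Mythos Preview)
Your proof is correct and follows essentially the same approach as the paper: identify $\fB_\phi(1)=\fD_\phi(1)$ since $\Gamma$ has rank~$0$, use the symmetry $(a,b)\mapsto(-a,b)$, slice in~$b$, and split the $a$-integration at $\alpha$ and~$2$. The paper writes the same two integrals $\int_0^\alpha\bigl(\frac{1+x^2}{2}-\frac{x^2-1}{2}\bigr)\,dx$ and $\int_\alpha^2\bigl(\frac{x^3+1}{3x}-\frac{x^2-1}{2}\bigr)\,dx$ without spelling out the endpoint comparison, whereas you supply the explicit factorizations $a^3-3a+2=(a-1)^2(a+2)$ and $a^3-3a-2=(a+1)^2(a-2)$ that justify the breakpoints; this is a useful addition but not a different method.
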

	
	\begin{proof}
        In this setting, the hyperplane $\cH$ is a single point and $\Gamma$ has rank 0, so $F=\cH$ is a fundamental domain for $\cH/\Gamma$ and $\Delta$ is the entirety of $\R^{2}-\{0\}$. Thus we have $\fB_{\phi}(1)=\fD_{\phi}(1)$.
		By symmetry we have 
		\begin{equation*}
			\vol_{2}(\fB_{\phi}(1))=2\left(\int_{0}^{\alpha} \left(\dfrac{1+x^{2}}{2}-\dfrac{x^{2}-1}{2}\right) \,dx+\int_{\alpha}^{2} \left(\dfrac{x^{3}+1}{3x}-\dfrac{x^{2}-1}{2}\right) \,dx\right).
		\end{equation*}
		Integrating this expression and using $\alpha^{3}=-3\alpha+2$, we get the desired result.
	\end{proof}
	
	See Table 2 for an overview of the elements appearing in the expression for $N_{\phi}(T)$. 
	\begin{table}[H]
		\begin{center}
			\label{tab:table2}
			\begin{tabular}{cc}
				\toprule 
				\textbf{Factors in $N_{\phi}(T)$} & \textbf{Value for $\phi$}\\
				\midrule 
				\vphantom{\Big|}$C_{\phi}^{\K}$ & 3/2 \\
				\cline{1-2}
				\vphantom{\Bigg|}$\zeta_{\K}(|w|)$ & $\dfrac{\pi^{6}}{945}$\\
				\cline{1-2}
				\vphantom{\Big|}$r_{2}$ & 0 \\
				\cline{1-2}
				\vphantom{\Big|}$D_{\K}$ & 1\\
				\cline{1-2}
				\vphantom{\Bigg|}$\vol_{Nm}(\fB_{\phi}(1))$ & $\dfrac{4+2\log 2+2\alpha-2\log\alpha}{3}$ \\
				\cline{1-2}
				\vphantom{\Big|}$\#\mu(\K)$ & 2\\
				\bottomrule 
			\end{tabular}
			\caption{\textit{Values of the different factors appearing in the expression for $N_{\phi}(T)$ applied to $\phi$ in equation~\eqref{phi}.}}
		\end{center}
	\end{table}
	Applying Theorem~\ref{maintheorem} and using a computer algebra system, we conclude the following.
	
	\begin{theorem}
		Notations as above. We have 
		\begin{equation*}
			N_{\phi}(T)= C\,T^{6} + O(T^{5}),
		\end{equation*}
		with 
		\begin{equation*}
			C=\dfrac{945}{2\cdot\pi^{6}}(2+\log 2+\alpha-\log\alpha)= 1.87086\ldots
		\end{equation*}
	\end{theorem}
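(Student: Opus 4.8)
The plan is to obtain the statement as a direct application of Theorem~\ref{maintheorem} to the morphism $\phi$ of~\eqref{phi}, feeding in the arithmetic data already computed above. First I would assemble the relevant quantities. Since $\K=\Q$ we have $N=1$, $r_2=0$, $D_\Q=1$, $\#\mu(\Q)=2$, and $\zeta_\Q(|w|)=\zeta_\Q(6)=\pi^6/945$; moreover $m=2$, $w=(2,4)$ so $|w|=6$, and $e=1$ (established below~\eqref{phi}). From Corollary~\ref{1236S} we have $\cD_\phi=\{(1),(2)\}$ and $\cS=\{(2)\}$; the preceding local computations yield $\fm^\fa=\fm^\fa_{\fc_1}=8\Z\times16\Z$ together with the values of $\#\fS^{\fa,\fd}_{\fc_1}$, and hence, by Lemma~\ref{ck}, the value $C_\phi^\Q=3/2$; and the volume lemma gives $\vol_{Nm}(\fB_\phi(1))=\vol_2(\fD_\phi(1))=\tfrac13(4+2\log2+2\alpha-2\log\alpha)$, where $\alpha=0.59607\ldots$ is the real root of $x^3+3x-2$.

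Next I would substitute these into Theorem~\ref{maintheorem}. As $e=1$, the exponent of $T$ in the main term is $|w|/e=6$, and the leading coefficient is
\[
\frac{1}{\zeta_\Q(6)}\left(\frac{2^{r_2}}{\sqrt{D_\Q}}\right)^{m}\frac{C_\phi^\Q\,\vol_{Nm}(\fB_\phi(1))}{\#\mu(\Q)}
=\frac{945}{\pi^6}\cdot\frac{\tfrac32\cdot\tfrac13\bigl(4+2\log2+2\alpha-2\log\alpha\bigr)}{2}.
\]
Writing $4+2\log2+2\alpha-2\log\alpha=2(2+\log2+\alpha-\log\alpha)$, the rational prefactor $\tfrac32\cdot\tfrac13\cdot2\cdot\tfrac12$ collapses to $\tfrac12$, so the coefficient is $C=\tfrac{945}{2\pi^6}(2+\log2+\alpha-\log\alpha)$, which numerically equals $1.87086\ldots$. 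The error term is the one supplied by Theorem~\ref{maintheorem}: since $\K=\Q$ and $w=(2,4)\notin\{(1,1),(2)\}$ we are in the generic case, which gives the claimed $O(T^5)$.

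I do not anticipate any genuine obstacle at this stage: all the substantive work has been carried out in the preceding lemmas — identifying $\cD_\phi$ and $\cS$, computing the moduli and the counts $\#\fS^{\fa,\fd}_{\fc_1}$, deducing $C_\phi^\Q=3/2$, and evaluating the integral for $\vol_2(\fD_\phi(1))$ in closed form using $\alpha^3=-3\alpha+2$ — so the theorem is essentially bookkeeping. The one point requiring a little care is the correct combination of the several rational factors with $\zeta_\Q(6)=\pi^6/945$, which is routine and, as indicated, readily confirmed with a computer algebra system.
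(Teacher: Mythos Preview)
Your proposal is correct and matches the paper's own proof, which simply substitutes the precomputed values (Table~2 and Lemma~\ref{ck}) into Theorem~\ref{maintheorem}. One minor remark: the generic error term from Theorem~\ref{maintheorem} is actually $O(T^{|w|/e-\min\{w_i\}/(eN)})=O(T^{6-2})=O(T^{4})$, which is stronger than the stated $O(T^{5})$, so your conclusion stands a fortiori.
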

	
	\subsection{Elliptic curves with a point of order 3}

    The modular curve $X_1(3)$ over~$\Q$ is isomorphic to the weighted projective line $\P(1,3)$: an elliptic curve $E$ with a point $P$ of order~$3$ over a field~$\K$ of characteristic~$0$ has a Weierstrass equation of the form
    \[
    E: y^2 + axy + \frac{b}{6}y = x^3
    \]
    with $P=(0,0)$.  The point $(a,b)\in\P(1,3)(\K)$ is independent of the choice of Weierstrass model as above.  In terms of the coordinates $(a,b)$ on $X_1(3)$ and $(c_4,c_6)$ on $X(1)$, the canonical morphism $X_1(3)\to X(1)$ corresponds to the morphism
    \begin{align*}
    \phi:\P(1,3)&\longrightarrow\P(4,6)\\
    (a,b)&\longmapsto(a^4 - 4ab, -a^6 + 6a^3b - 6b^2).
    \end{align*}
    We can therefore apply Theorem~\ref{maintheorem} with $m=2$, $w=(1,3)$, $u=(4,6)$ and $\K=\Q$.
    In Theorem~\ref{X1_3} below, we will give an asymptotic expression for $N_\phi(T)$.

    As before, we take $R_\Q=\{(1)\}$.
    
    \begin{lemma}
    The set $\cD_\phi$ equals $\{(1)\}$.
    \end{lemma}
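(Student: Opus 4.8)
The plan is to apply Lemma~\ref{finiteness} together with a prime-by-prime analysis, exactly as in the proof of Lemma~\ref{1236} for the order~2 case. Since $\phi$ has $e=1$ (the defining polynomials $a^4-4ab$ and $-a^6+6a^3b-6b^2$ are homogeneous of degrees $4=4\cdot1$ and $6=6\cdot1$ in the grading where $a$ has degree $1$ and $b$ has degree $3$), Lemma~\ref{finiteness} already guarantees that $\cD_\phi$ is finite; it remains to show it is trivial, i.e.\ that $\fI_u(\phi(x))=\fI_w(x)$ for all $x\in\P(1,3)(\Q)$.

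First I would reduce to the case where $x=(a,b)$ is primitive, meaning $\fI_{(1,3)}(a,b)=(1)$; by Lemma~\ref{scalingidealother}~(1) the discrepancy ideal is unchanged under the weight action, so this is harmless. Primitivity says that for every prime $p$, either $p\nmid a$ or $p^3\nmid b$. I must then show that for every prime $p$ there is \emph{no} simultaneous divisibility $p^4\mid a^4-4ab$ and $p^6\mid -a^6+6a^3b-6b^2$, since $\fI_{(4,6)}(\phi(a,b))=\prod_p p^{\min(\lfloor \ord_p(\phi_1)/4\rfloor,\lfloor\ord_p(\phi_2)/6\rfloor)}$ and we want every exponent to be $0$. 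Fix a prime $p$ and split into cases. If $p\nmid a$, then $\ord_p(a^4-4ab)\ge 1$ would force $p\mid 4b$ (when $p\nmid 4$) hence $p\mid b$, but then $p^4\mid a^4-4ab$ forces $p^4\mid 4ab$, so $p^3\mid b$ (when $p\ne 2$), contradicting primitivity; and one checks $p=2$ separately by a short direct computation modulo a small power of~$2$. If $p\mid a$, then primitivity gives $p^3\nmid b$, so $\ord_p(b)\in\{0,1,2\}$; if $\ord_p(b)=0$ then $\ord_p(-a^6+6a^3b-6b^2)=\ord_p(6b^2)\le 1<6$ (again treating $p\in\{2,3\}$ by hand), and if $1\le\ord_p(b)\le 2$ a direct valuation estimate of each of the three terms $a^6$, $6a^3b$, $6b^2$ shows the minimum of their valuations is less than~$6$. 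In every case the hypothesized divisibility fails, so $\fI_u(\phi(a,b))=(1)=\fI_w(a,b)$.

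The main obstacle, as in Lemma~\ref{1236}, will be the small primes $p=2$ and $p=3$, where the coefficients $4$ and $6$ appearing in the polynomials are themselves divisible by~$p$ and the clean ``$p\mid$ coefficient $\iff$'' reductions break down. For these I would simply enumerate the relevant residues: work modulo $2^6$ (resp.\ $3^6$) for $(a,b)$ subject to primitivity, compute $\ord_2$ (resp.\ $\ord_3$) of $a^4-4ab$ and $-a^6+6a^3b-6b^2$, and verify by a finite check (easily done by hand or with a computer algebra system, in the spirit of the matrix displayed in the order~2 example) that one never gets $\ord_p(\phi_1)\ge4$ and $\ord_p(\phi_2)\ge6$ simultaneously. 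Since $\cD_\phi$ is already known to be finite, it actually suffices to find, for each prime $p$, \emph{one} primitive point whose discrepancy is trivial at $p$ together with the prime-by-prime bound above; combining these shows $\cD_\phi=\{(1)\}$. This also immediately gives $\cS=\varnothing$ by Lemma~\ref{congruenceconditions} and the definition of $\cS$, which will simplify the subsequent computation of $C_\phi^\Q$.
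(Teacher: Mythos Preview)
Your overall strategy matches the paper's---reduce to a primitive representative $(a,b)$ and argue prime by prime---but the case $p\nmid a$ contains a genuine error, in fact two.

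First, the claim that $\ord_p(a^4-4ab)\ge1$ forces $p\mid 4b$ is false. Writing $a^4-4ab=a(a^3-4b)$ with $p\nmid a$, the condition $p\mid a^4-4ab$ says $a^3\equiv 4b\pmod p$; since $p\nmid a^3$, this actually gives $p\nmid 4b$ (when $p\nmid 4$), the opposite of what you assert. Second, and more seriously, even if you could deduce $p^3\mid b$, this does \emph{not} contradict primitivity: primitivity of $(a,b)$ for the weights $(1,3)$ means that no prime satisfies both $p\mid a$ and $p^3\mid b$, and you are precisely in the case $p\nmid a$, so $p^3\mid b$ is perfectly compatible with primitivity.

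The paper fixes this by bringing in the second polynomial. From $p\nmid a$ and $p^4\mid a(a^3-4b)$ one obtains $a^3\equiv 4b\pmod{p^4}$; substituting into $-a^6+6a^3b-6b^2$ gives
\[
-(4b)^2+6(4b)b-6b^2=2b^2\pmod{p^4},
\]
so $p^4\mid 2b^2$, hence $p^2\mid b$. Feeding this back into $a^3\equiv 4b\pmod{p^4}$ yields $p\mid a^3$, i.e.\ $p\mid a$, contradicting the case hypothesis. This argument works uniformly for all primes, including $p=2$ and $p=3$, so no separate finite check is needed. Your treatment of the case $p\mid a$ is essentially correct and can likewise be made uniform: from $p\mid a$ we get $p^6\mid a^6$, hence $p^6\mid 6b(a^3-b)$; since $\ord_p(a^3)\ge3>\ord_p(b)$ by primitivity, $\ord_p(a^3-b)=\ord_p(b)$, and then $\ord_p(6)+2\ord_p(b)\ge6$ forces $\ord_p(b)\ge3$ for every prime~$p$, contradicting primitivity.
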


    \begin{proof}
    Let $x\in\P(1,3)(\Q)$ be given. We may assume that $x$ is represented by a pair $(a,b)\in\Z^2$ such that there exists no prime number~$p$ with $p\mid a$ and $p^3\mid b$.  Then we have $\fI_{(1,3)}(a,b)=(1)$. Suppose that for some prime number~$p$ we have $p^4\mid a^4-4ab$ and $p^6\mid -a^6+6a^3b-6b^2$. If $p\nmid a$, then we obtain $p^4\mid a^3-4b$, hence $p^4\mid -(4b)^2+6(4b)b-6b^2=2b^2$, which implies $p^2\mid b$, but since $p^4\mid a^3-4b$ this gives $p\mid a$, contradiction.  Thus we have $p\mid a$, implying $p^6\mid 6b(a^3-b)$ and hence $p^3\mid b$, which together with $p\mid a$ gives a contradiction. Thus such a $p$ does not exist, so we have $\fI_{(4,6)}(\phi(a,b))=(1)$.
    \end{proof}

    \begin{corollary}
    We have $C^\Q_\phi=1$.
    \end{corollary}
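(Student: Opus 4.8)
The plan is to substitute the data just assembled into the formula for $C^{\K}_{\phi}$ from Theorem~\ref{maintheorem} and to check that every factor is trivial in the present situation. The two inputs we need are that $\phi$ has $e=1$ --- visible from the fact that its defining polynomials $a^4-4ab$ and $-a^6+6a^3b-6b^2$ are homogeneous of degrees~$4$ and~$6$ in the grading $\deg a=1$, $\deg b=3$, matching $u=(4,6)$ --- and that $\cD_{\phi}=\{(1)\}$ by the preceding lemma.

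First I would note that since $R_{\Q}=\{(1)\}$ and $\cD_{\phi}=\{(1)\}$, the outer two sums in $C^{\Q}_{\phi}$ each collapse to the single term $\fa=\Z$, $\fd=(1)$, for which $(N(\fd)^{1/e}N(\fa))^{|w|}=1$. Next I would observe that for every prime~$p$ we have $\ord_{p}\fd=0$ for the unique $\fd\in\cD_{\phi}$, so the second assertion of Lemma~\ref{congruenceconditions} allows us to take $\fm_{p}^{\fa}=(\fa_{p})^{w}=\Z_{p}^{2}$ for all~$p$; hence $\cS=\emptyset$, $\fm^{\Z}=\Z^{2}$, and --- exactly as in Example~\ref{phiidentity} --- $V^{\Z,(1)}=\Z^{2}$. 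Consequently the innermost sum over products $\fc_{1}$ of primes in~$\cS$ has only the empty product $\fc_{1}=(1)$, for which $\mu_{\Q}((1))=1$, $\fm^{\Z}_{(1)}=\Z^{2}$ so $(\Z^{2}:\fm^{\Z}_{(1)})=1$, $V^{\Z,(1)}_{(1)}=\Z^{2}$ so $\#\fS^{\Z,(1)}_{(1)}=\#(\Z^{2}/\Z^{2})=1$, and the empty product $\prod_{\fp\in\cS}(1-N(\fp)^{-|w|})^{-1}$ equals~$1$. Putting these together, $C^{\Q}_{\phi}$ is the single summand $1\cdot\frac{1\cdot 1}{1}\cdot 1=1$.

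I do not expect a genuine obstacle. The only step that uses the machinery of Section~\ref{sec2} is the identification $\cS=\emptyset$, which is immediate from Lemma~\ref{congruenceconditions} once $\cD_{\phi}=\{(1)\}$ is known; the rest is bookkeeping of factors all equal to~$1$, entirely parallel to (but simpler than) the computation of $C^{\Q}_{\phi}$ in the $X_{1}(2)$ example, where the nontrivial set $\cD_{\phi}$ forces $\cS\neq\emptyset$ and produces a value different from~$1$.
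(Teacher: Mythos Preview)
Your proposal is correct and follows exactly the approach the paper intends: the paper states this as a corollary of the preceding lemma without further proof, and your argument spells out precisely why $\cD_{\phi}=\{(1)\}$ forces $\cS=\emptyset$ via Lemma~\ref{congruenceconditions}, collapsing every sum and product in the formula for $C^{\Q}_{\phi}$ to a single trivial term. Your verification is more explicit than the paper's, but there is no difference in method.
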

    
    The region $\fD_\phi(1)$ is defined by the inequalities
    \[
    -1\le a^4-4ab\le 1
    \quad\text{and}\quad
    -1\le-a^6+6a^3b-6b^2\le1.
    \]
    This region is shown in Figure~\ref{figure2}. As before, we have $\fB_\phi(1)=\fD_\phi(1)$.

    \begin{figure}
    \begin{center}
    \includegraphics[scale=0.8]{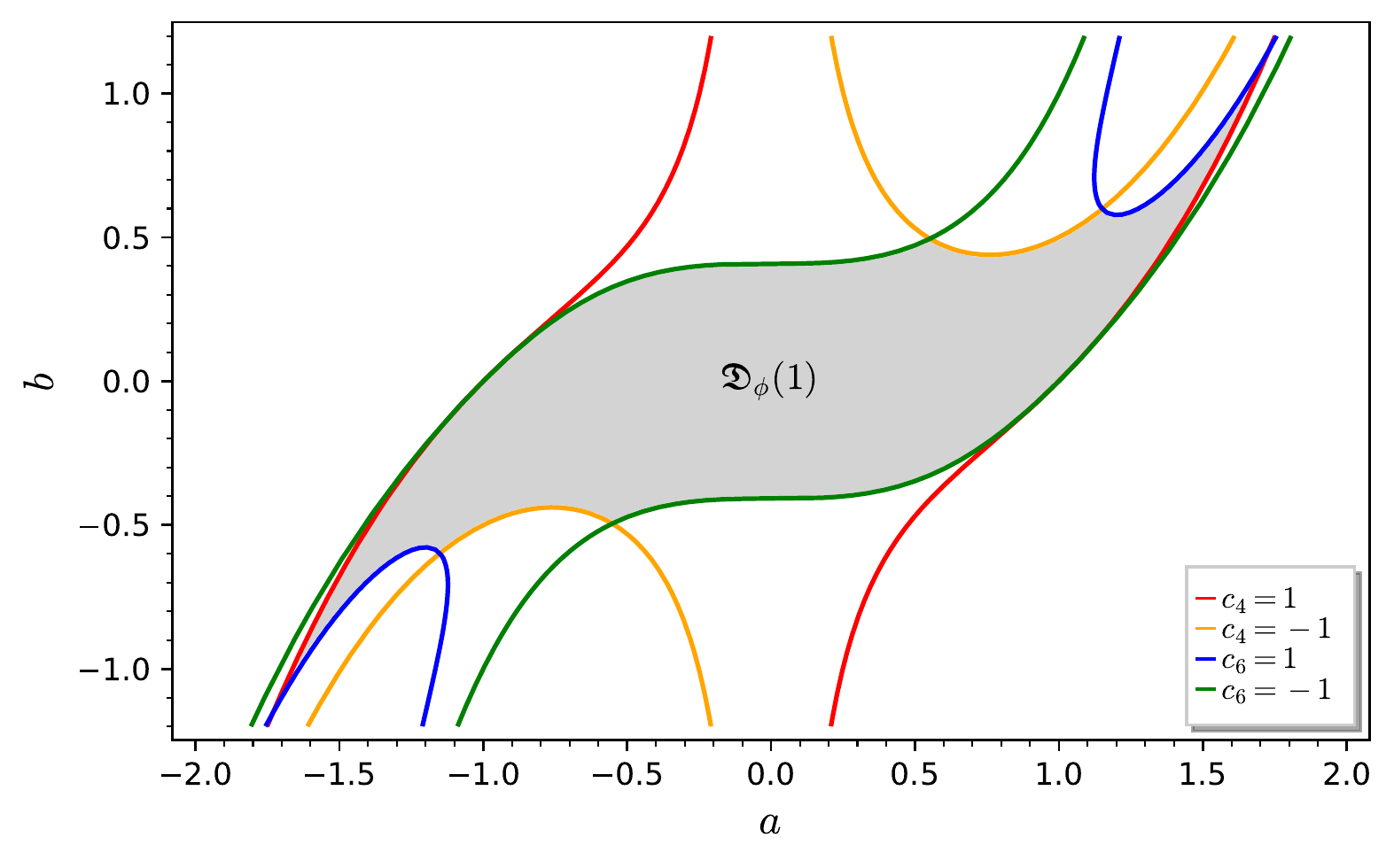}
    \end{center}
    \caption{The region $\fD_{\phi}(1)$ for $X_1(3)$}
    \label{figure2}
    \end{figure}

    \begin{lemma}
    The volume of the region $\fB_\phi(1)$ is
    \begin{align*}
    \vol_2(\fB_\phi(1)) &= 1+I_0-I_1-\frac{\alpha_1^2-\alpha_0^2}{8}+\frac{1}{4}\log(3\alpha_1/\alpha_0)\\
    &=1.8217\ldots,
    \end{align*}
    where $-\alpha_0=-0.3044\ldots$ and $\alpha_1=1.3240\ldots$ are the unique negative and positive real root of the polynomial $x^4+6x^2-8x-3$, respectively, and
    \begin{align*}
    I_0&=\frac{1}{\sqrt{3}}\int_{-\sqrt{\alpha_0}}^1 \sqrt{a^6+2}\,da,\\
    I_1&=\frac{1}{\sqrt{3}}\int_{\sqrt{\alpha_1}}^{\sqrt{3}}\sqrt{a^6-2}\,da.
    \end{align*}
    \end{lemma}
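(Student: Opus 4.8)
The plan is to compute $\vol_2(\fB_\phi(1))$ directly as an iterated integral in the coordinates $(a,b)$ on $\R^2$. Since $\K=\Q$ has a single infinite place and $\cO_\Q^\times$ has rank~$0$, the hyperplane $\cH$ reduces to a point, $F=\cH$, and hence $\Delta=\R^2\setminus\{0\}$; thus $\fB_\phi(1)=\fD_\phi(1)$ is exactly the region in $\R^2$ cut out by $-1\le a^4-4ab\le1$ and $-1\le -a^6+6a^3b-6b^2\le1$. I will write $\vol_2(\fD_\phi(1))=\int_{\R}\ell(a)\,da$, where $\ell(a)$ is the length of the slice $\{b\in\R\mid(a,b)\in\fD_\phi(1)\}$. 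Both defining polynomials are invariant under $(a,b)\mapsto(-a,-b)$, so $\ell(-a)=\ell(a)$ and it suffices to compute $2\int_0^\infty\ell(a)\,da$.

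Fix $a>0$. The first pair of inequalities gives the interval $J_1(a)=[\frac{a^4-1}{4a},\frac{a^4+1}{4a}]$, of length $\frac1{2a}$ and centred at $\frac{a^3}{4}$. For the second pair, completing the square as $-a^6+6a^3b-6b^2=\frac12 a^6-6(b-\frac12 a^3)^2$ turns the inequalities into $\frac1{12}(a^6-2)\le(b-\frac12 a^3)^2\le\frac1{12}(a^6+2)$; call the resulting set $J_2(a)$. For $0<a\le 2^{1/6}$ this is the single interval of length $\frac1{\sqrt3}\sqrt{a^6+2}$ centred at $\frac12 a^3$, and for $a>2^{1/6}$ it is obtained from that interval by deleting an open middle interval of length $\frac1{\sqrt3}\sqrt{a^6-2}$, so it then has two components. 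Now $\ell(a)=|J_1(a)\cap J_2(a)|$, and I will partition $(0,\infty)$ into the finitely many subintervals on which this intersection has a uniform description. The breakpoints are $a=2^{1/6}$ (where $J_2$ splits) together with the positive values of $a$ at which two of the four boundary curves $a^4-4ab=\pm1$, $-a^6+6a^3b-6b^2=\pm1$ meet; substituting the endpoints $b=(a^4\mp1)/(4a)$ of $J_1(a)$ into the quadratic expression and clearing denominators, one finds these crossings occur exactly at $a^2\in\{\alpha_0,1,\alpha_1,3\}$, where $\alpha_0$ and $\alpha_1$ are the positive reals appearing in the statement (for instance, the pair $a^4-4ab=-1$, $-a^6+6a^3b-6b^2=1$ leads, after $x=a^2$, to $x^4+6x^2-8x-3=0$). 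I expect to find $\ell(a)=\frac1{\sqrt3}\sqrt{a^6+2}$ on $(0,\sqrt{\alpha_0}]$, where $J_2\subseteq J_1$; $\ell(a)=\frac{1-a^4}{4a}+\frac1{2\sqrt3}\sqrt{a^6+2}$ on $[\sqrt{\alpha_0},1]$; $\ell(a)=\frac1{2a}$ on $[1,\sqrt{\alpha_1}]$, where $J_1$ lies inside the lower component of $J_2$; $\ell(a)=\frac{a^3}{4}+\frac1{4a}-\frac1{2\sqrt3}\sqrt{a^6-2}$ on $[\sqrt{\alpha_1},\sqrt3]$; and $\ell(a)=0$ for $a\ge\sqrt3$, so that $\fD_\phi(1)$ is bounded. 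Each such claim is a short inequality argument that, after squaring, reduces to a sign condition on one of the polynomials $(a^2-1)^3(a^2+3)$, $a^8+6a^4+8a^2-3$, $a^8+6a^4-8a^2-3$.

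It then remains to integrate these expressions over the five ranges and double. All of the integrals are elementary: the $\int\frac{da}{a}$ contributions produce the term $\frac14\log(3\alpha_1/\alpha_0)$; the $\int a^3\,da$ contributions, together with their boundary values, combine to $1-\frac18(\alpha_1^2-\alpha_0^2)$; the pieces carrying $\sqrt{a^6+2}$ assemble, using that $a\mapsto\sqrt{a^6+2}$ is even, into $\frac1{\sqrt3}\int_{-\sqrt{\alpha_0}}^1\sqrt{a^6+2}\,da=I_0$; and the piece carrying $\sqrt{a^6-2}$ gives $-I_1$. Adding everything gives $\vol_2(\fB_\phi(1))=1+I_0-I_1-\frac18(\alpha_1^2-\alpha_0^2)+\frac14\log(3\alpha_1/\alpha_0)$, and numerical evaluation of the integrals $I_0$ and $I_1$ yields $1.8217\ldots$.

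The main obstacle I anticipate is the bookkeeping in the middle step: for each subinterval of $(0,\infty)$ one must pin down which endpoints bound the slice — one from $J_1$ and one from $J_2$, or both from a single interval — and in the range $a>2^{1/6}$, where $J_2$ has two components, one must check that the upper component never meets $J_1$ while exactly the relevant endpoint of the lower component does, so that no contribution is missed or double-counted. This is a finite, purely computational case analysis, but it is the step where an error is most likely to slip in; the picture of $\fD_\phi(1)$ in Figure~\ref{figure2} is a useful consistency check.
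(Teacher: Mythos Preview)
Your proposal is correct and follows essentially the same route as the paper: both arguments slice $\fD_\phi(1)=\fB_\phi(1)$ in the $a$-direction, determine the piecewise boundaries of the slice from the four curves $a^4-4ab=\pm1$ and $-a^6+6a^3b-6b^2=\pm1$, identify the breakpoints at $a^2\in\{\alpha_0,1,\alpha_1,3\}$, and integrate. Your use of the central symmetry $(a,b)\mapsto(-a,-b)$ to reduce to $a>0$ is a natural shortcut that the paper does not make explicit (it instead lists the bounds on the full interval $-\sqrt{3}\le a\le\sqrt{3}$), but the underlying computation and the resulting pieces are the same.
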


    \begin{proof}
    The inequalities defining $\fD_\phi(1)$ are equivalent to
    \begin{align*}
    -\sqrt{3}&\le a\le\sqrt{3},\\
    b&\le\begin{cases}
    \frac{a^4-1}{4a}& \text{if }-\sqrt{3}\le a\le -1,\\
    \frac{a^3}{2}+\sqrt{\frac{a^6+2}{12}}& \text{if }-1\le a\le\sqrt{\alpha_0},\\
    \frac{a^4+1}{4a}& \text{if }\sqrt{\alpha_0}\le a\le\sqrt{\alpha_1},\\
    \frac{a^3}{2}-\sqrt{\frac{a^6-2}{12}}& \text{if }\sqrt{\alpha_1}\le a\le\sqrt{3},
    \end{cases}\\
    b&\ge\begin{cases}
    \frac{a^3}{2}+\sqrt{\frac{a^6-2}{12}}& \text{if }-\sqrt{3}\le a\le-\sqrt{\alpha_1},\\
    \frac{a^4+1}{4a}& \text{if }-\sqrt{\alpha_1}\le a\le -\sqrt{\alpha_0},\\
    \frac{a^3}{2}-\sqrt{\frac{a^6+2}{12}}& \text{if }-\sqrt{\alpha_0}\le a\le 1,\\
    \frac{a^4-1}{4a}& \text{if }1\le a\le\sqrt{3}.
    \end{cases}
    \end{align*}
    It is now straightforward to express the volume as a finite sum of integrals over~$a$ and to simplify this to the expression given in the lemma.
    \end{proof}

    Again applying Theorem~\ref{maintheorem} and using the identity $\zeta(4)=\pi^4/90$, we obtain the following explicit counting result.

    \begin{theorem}
    \label{X1_3}
    Notations as above. We have
    \[
    N_\phi(T) = CT^4 + O(T^3),
    \]
    where
    \[
    C = \frac{45}{\pi^4}\vol_2(\fB_\phi(1)) = 0.8416\ldots
    \]
    \end{theorem}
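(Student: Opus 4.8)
The proof will be a direct application of Theorem~\ref{maintheorem} with $m=2$, $w=(1,3)$, $u=(4,6)$ and $\K=\Q$, all of the auxiliary quantities having been assembled in the discussion above. First I would record the relevant invariants: for $\K=\Q$ one has $N=1$, $r_2=0$, $D_\Q=1$ and $\#\mu(\Q)=2$, while the integer~$e$ attached to~$\phi$ by Lemma~\ref{morphism} is~$1$, since (giving $a$ and~$b$ degrees~$1$ and~$3$) the polynomials $f_1=a^4-4ab$ and $f_2=-a^6+6a^3b-6b^2$ are homogeneous of degrees $4=1\cdot u_1$ and $6=1\cdot u_2$. Note also $|w|=4$ and $Nm=2$.

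Next I would substitute the constant $C_\phi^\Q=1$ established in the corollary above — which rests on $\cD_\phi=\{(1)\}$, forcing $\cS=\emptyset$ so that every sum defining $C_\phi^\Q$ collapses to the single term~$1$ — together with the value of $\vol_2(\fB_\phi(1))$ from the preceding lemma. With these data the leading coefficient of Theorem~\ref{maintheorem} becomes
\[
\frac{1}{\zeta_\Q(4)}\left(\frac{2^0}{\sqrt1}\right)^2\frac{1\cdot\vol_2(\fB_\phi(1))}{2}=\frac{90}{\pi^4}\cdot\frac{\vol_2(\fB_\phi(1))}{2}=\frac{45}{\pi^4}\vol_2(\fB_\phi(1)),
\]
using $\zeta(4)=\pi^4/90$. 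For the error term, the exponent $\frac{|w|}{e}-\frac{\min\{w_1,w_2\}}{eN}$ equals $4-1=3$, and since $w=(1,3)\notin\{(1,1),(2)\}$ the logarithmic refinement on the last line of Theorem~\ref{maintheorem} does not apply; hence the error is $O(T^3)$. This yields $N_\phi(T)=CT^4+O(T^3)$ with $C=\frac{45}{\pi^4}\vol_2(\fB_\phi(1))$.

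Finally I would evaluate $C$ numerically: since $45/\pi^4=0.46197\ldots$ and $\vol_2(\fB_\phi(1))=1.8217\ldots$ by the previous lemma, this gives $C=0.8416\ldots$, as claimed.

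I do not anticipate any genuine obstacle at this stage: all of the substantive work — determining $\cD_\phi$ and hence $C_\phi^\Q$, and above all carrying out the volume computation for $\fB_\phi(1)$ with its algebraic constants $\alpha_0,\alpha_1$ and transcendental integrals $I_0,I_1$ — has already been done in the lemmas leading up to the theorem. The only points requiring a moment's care are the bookkeeping of the error exponent and the verification that the special case $\K=\Q$, $w\in\{(1,1),(2)\}$ of Theorem~\ref{maintheorem}, which would contribute a factor $\log T$, does not apply here.
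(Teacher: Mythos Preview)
Your proposal is correct and follows exactly the paper's approach: the paper's proof of Theorem~\ref{X1_3} consists of the single sentence ``Again applying Theorem~\ref{maintheorem} and using the identity $\zeta(4)=\pi^4/90$, we obtain the following explicit counting result,'' and you have simply unpacked this substitution in full detail. Your additional remarks (checking $e=1$, verifying that the $\log T$ refinement does not apply since $w=(1,3)\notin\{(1,1),(2)\}$, and noting that $\cS=\emptyset$ so the defining sum for $C_\phi^\Q$ collapses) are all accurate and make explicit what the paper leaves implicit.
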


	
	\section{Related results and future work}
        \label{sec5}
	
        Theorem~\ref{maintheorem} first appeared in the second-named author's master's thesis \cite{MA}, which was supervised by the first-named author.
	While this paper was being written, Tristan Phillips \cite[Theorem~1.2.1]{Ph} proved a simultaneous generalization of our Theorem~\ref{maintheorem} and a result of Bright, Browning and Loughran \cite[\S3]{BBL}, who refined Schanuel's theorem by allowing local conditions to be imposed at infinitely many places.  Phillips also gives applications to modular curves, providing counting results for modular curves isomorphic to either a weighted projective space or $\P(1)\times\P(2)$ \cite[Theorems 1.1.1 and 1.1.2]{Ph}.
	
	Turning to possible future work, it would be interesting to obtain an asymptotic formula for $N_{\phi}(T)$ for $\phi$ a morphism between spaces that are `close' to being weighted projective spaces. An example of this would be the canonical morphism $X_0(3)\to X(1)$; the modular curve $X_0(3)$ has coarse moduli space $\P^1$, but is not a weighted projective line \cite[Remark 7.4]{BrNa}. In \cite{PiPoVo}, an asymptotic formula is given for counting $\Q$-points of bounded height on $X_{0}(3)$ using a method entirely different from ours. It would be interesting to find out if our methods can be extended to their setting.
	
	On top of that, a (simpler) generalization would be to prove a result analogous to Theorem~\ref{maintheorem} for morphisms between products of weighted projective spaces.
	
        \subsubsection*{Acknowledgements}
        We would like to thank Tristan Phillips for several useful comments and corrections, and for sharing an early version of~\cite{Ph} with us.

\end{document}